\renewcommand*{\backref}[1]{}
\renewcommand*{\backrefalt}[4]{\quad \tiny 
  \ifcase #1 (\textbf{NOT CITED.})%
  \or    (Cited on page~#2.)%
  \else   (Cited on pages~#2.)%
  \fi}
\DeclareFontFamily{U} {MnSymbolA}{}
\DeclareFontShape{U}{MnSymbolA}{m}{n}{
   <-6> MnSymbolA5
   <6-7> MnSymbolA6
   <7-8> MnSymbolA7
   <8-9> MnSymbolA8
   <9-10> MnSymbolA9
   <10-12> MnSymbolA10
   <12-> MnSymbolA12}{}
\DeclareFontShape{U}{MnSymbolA}{b}{n}{
   <-6> MnSymbolA-Bold5
   <6-7> MnSymbolA-Bold6
   <7-8> MnSymbolA-Bold7
   <8-9> MnSymbolA-Bold8
   <9-10> MnSymbolA-Bold9
   <10-12> MnSymbolA-Bold10
   <12-> MnSymbolA-Bold12}{}
\DeclareSymbolFont{MnSyA} {U} {MnSymbolA}{m}{n}
 \DeclareFontFamily{U} {MnSymbolC}{}
\DeclareFontShape{U}{MnSymbolC}{m}{n}{
  <-6> MnSymbolC5
  <6-7> MnSymbolC6
  <7-8> MnSymbolC7
  <8-9> MnSymbolC8
  <9-10> MnSymbolC9
  <10-12> MnSymbolC10
  <12-> MnSymbolC12}{}
\DeclareFontShape{U}{MnSymbolC}{b}{n}{
  <-6> MnSymbolC-Bold5
  <6-7> MnSymbolC-Bold6
  <7-8> MnSymbolC-Bold7
  <8-9> MnSymbolC-Bold8
  <9-10> MnSymbolC-Bold9
  <10-12> MnSymbolC-Bold10
  <12-> MnSymbolC-Bold12}{}
\DeclareSymbolFont{MnSyC} {U} {MnSymbolC}{m}{n}
\DeclareMathSymbol{\top}{\mathord}{MnSyA}{219} 
\DeclareMathSymbol{\plus}{\mathord}{MnSyC}{20} 
\declaretheorem[numberwithin=section]{theorem} 
\declaretheorem[sibling=theorem]{proposition} 
\declaretheorem[sibling=theorem]{lemma}
\declaretheorem[sibling=theorem]{corollary}
\declaretheorem[sibling=theorem]{question}
\declaretheorem[sibling=theorem, style=definition]{example}
\declaretheorem[sibling=theorem, style=definition]{definition}
\declaretheorem[sibling=theorem, style=remark]{remark}
\numberwithin{equation}{section}     
\setlist[enumerate,1]{label={\upshape(\alph*)},ref=\alph*}
\newcommand{\N}{\mathbb{N}}   \newcommand{\R}{\mathbb{R}}
\renewcommand{\P}{\mathbb{P}}
\newcommand{\cC}{\mathcal{C}}
\newcommand{\cF}{\mathcal{F}}
\newcommand{\cP}{\mathcal{P}}
\newcommand{\cS}{\mathcal{S}}\newcommand{\cU}{\mathcal{U}}
\newcommand{\st}{\;\mathord{;}\;}
\newcommand*\circled[1]{\tikz[baseline=(char.base)]{
    \node[shape=circle,draw,inner sep=1pt] (char) {\footnotesize{#1}};}}
\DeclareMathOperator{\per}{per}
\newcommand{\sym}{\mathsf{sym}}
\newcommand{\hsm}{\mathsf{hsm}}
\newcommand{\pmean}{\mathsf{pm}}
\newcommand{\smean}{\mathsf{sm}}
\newcommand{\gmean}{\mathsf{gm}}
\newcommand{\PHS}{{\cP_\mathrm{HS}(\R_{\plus})}}
\newcommand{\Pa}{\cP_1(\R_\plus)}
\newcommand{\Pg}{\cP_\mathrm{g}(\R_{\plus\plus})}
\renewcommand{\setminus}{\smallsetminus}
\renewcommand{\epsilon}{\varepsilon}
\newcommand{\doi}[1]{\href{http://dx.doi.org/#1}{\tt DOI}} 
\newcommand{\directlink}[1]{\href{#1}{\tt URL}}
\newcommand{\MRev}[1]{\href{https://mathscinet.ams.org/mathscinet-getitem?mr=#1}{\tt MR}} 
\newcommand{\zb}[1]{\href{https://zbmath.org/?q=an:#1}{\tt ZB}} 
\begin{document}

\title{The Halász--Székely Barycenter}
\date{\today}

\author[J.~Bochi]{Jairo Bochi}
\address{Facultad de Matem\'aticas, Pontificia Universidad Cat\'olica de Chile}
\email{\href{mailto:jairo.bochi@mat.uc.cl}{jairo.bochi@mat.uc.cl}}

\author[G.~Iommi]{Godofredo Iommi}
\address{Facultad de Matem\'aticas, Pontificia Universidad Cat\'olica de Chile}
\email{\href{mailto:giommi@mat.uc.cl}{giommi@mat.uc.cl}}

\author[M.~Ponce]{Mario Ponce}
\address{Facultad de Matem\'aticas, Pontificia Universidad Cat\'olica de Chile}
\email{\href{mailto:mponcea@mat.uc.cl}{mponcea@mat.uc.cl}} 

\subjclass[2020]{26E60; 26D15, 15A15, 37A30, 60F15}

\begin{thanks}
{The authors were partially supported by CONICYT PIA ACT172001.
J.B.\ was partially supported  by Proyecto Fondecyt 1180371. 
G.I.\ was partially supported  by Proyecto Fondecyt 1190194.
M.P. \ was partially supported  by Proyecto Fondecyt 1180922.}
\end{thanks}


\maketitle

\begin{abstract}
We introduce a notion of barycenter of a probability measure related to the symmetric mean of a collection of nonnegative real numbers. Our definition is inspired by the work of Hal\'asz and Sz\'ekely, who in 1976 proved a law of large numbers for symmetric means. We study analytic properties of this Hal\'asz--Sz\'ekely barycenter. We establish fundamental inequalities that relate the symmetric mean of a list of nonnegative real numbers with the barycenter of the measure uniformly supported on these points. As consequence, we go on to establish an ergodic theorem stating that the symmetric means of a sequence of dynamical observations converges to  the Hal\'asz--Sz\'ekely barycenter of the corresponding distribution.
\end{abstract}

\section{Introduction}

\emph{Means} have fascinated man for a long time. 
Ancient Greeks knew the arithmetic, geometric, and harmonic means of two positive numbers (which they may have learned from the Babylonians); they also studied other types of means that can be defined using proportions: see \cite[pp.~85--89]{Heath}. Newton and Maclaurin encountered the symmetric means (more about them later). Huygens introduced the notion of expected value and Jacob Bernoulli proved the first rigorous version of the law of large numbers: see \cite[pp.~51, 73]{Maistrov}.
Gauss and Lagrange exploited the connection between the arithmetico-geometric mean and elliptic functions: see \cite{Borweins}. 
Kolmogorov and other authors considered means from an axiomatic point of view and determined when a mean is arithmetic under a change of coordinates (i.e.\ \emph{quasiarithmetic}): see \cite[p.~157--163]{HLP}, \cite[Chapter~17]{AD}.
Means and inequalities between them are the main theme of the classical book \cite{HLP} by Hardy, Littlewood, and Pólya, and the book \cite{Bullen} by Bullen is a comprehensive account of the subject. 
Going beyond the real line, there are notions of averaging that relate to the geometric structure of the ambient space: see e.g.\ \cite{Stone,EM,Navas,KLL}.

In this paper, we are interested in one of the most classical types of means: the elementary symmetric polynomials means, or symmetric means for short. Let us recall their definition. Given integers $n \ge k \ge 1$,
the \emph{$k$-th symmetric mean} of a list of nonnegative numbers $x_1,\dots,x_n$  is:
\begin{equation}\label{e.def_sym_mean}
\sym_k(x_1,\dots,x_n) \coloneqq \left(\frac{E^{(n)}_k(x_1,\dots,x_n)}{\binom{n}{k}}\right)^{\frac{1}{k}} \, ,
\end{equation}
where $E^{(n)}_k(x_1,\dots,x_n) \coloneqq \sum_{i_1<\cdots<i_k} x_{i_1} \cdots x_{i_k}$
is the elementary symmetric polynomial of degree $k$ in $n$ variables.
Note that the extremal cases $k=1$ and $k=n$ correspond to arithmetic and the geometric means, respectively. The symmetric means are non-increasing as functions of $k$: this is Maclaurin's inequality: see \cite[p.~52]{HLP} or \cite[p.~327]{Bullen}.
For much more information on symmetric means and their relatives, see \cite[Chapter~V]{Bullen}.

Let us now turn to Probability Theory.
A law of large numbers in terms of symmetric means was obtained by Halász and Székely \cite{HS76}, confirming a conjecture of Székely \cite{Szekely_first}.
Let $X_1$, $X_2$, \dots be a sequence of nonnegative independent identically distributed random variables, and from them we form another sequence of random variables:
\begin{equation}
S_n \coloneqq \sym_k (X_1,\dots,X_n) \, ,
\end{equation}
The case of $k=1$ corresponds to the setting of the usual law of large numbers.
The case of constant $k>1$ is not significantly different from the classical setting. 
Things become more interesting if $k$ is allowed to depend on $n$, and it turns out to be advantageous to assume that $k/n$ converges to some number $c \in [0,1]$.
In this case, Halász and Székely \cite{HS76} have proved that if 
$X = X_1$ 
is strictly positive and satisfies some integrability conditions, then $S_n$ converges almost surely to a non-random constant.
Furthermore, they gave a formula for this limit, which we call the \emph{Halász--Székely mean} with parameter $c$ of the random variable $X$.
Halász and Székely theorem was extended to the nonnegative situation by van~Es \cite{vanEs} (with appropriate extra hypotheses). 
The simplest example consists of a random variable $X$ that takes two nonnegative values $x$ and $y$, each with probability $1/2$, and $c=1/2$; in this case the Halász--Székely mean is $\left(\frac{\sqrt{x}+\sqrt{y}}{2}\right)^2$. But this example is misleadingly simple, and Halász--Székely means are in general unrelated to power means.

Fixed the parameter $c$, the Halász--Székely mean of a nonnegative random variable $X$ only depends on its  distribution, which we regard as a probability measure $\mu$ on the half-line $[0,+\infty)$. 
Now we shift our point of view and consider probability measures as the fundamental objects. Instead of speaking of the mean of a probability measure, we prefer the word \emph{barycenter}, reserving the word \emph{mean} for lists of numbers (with or without weights), functions, and random variables. 
This is more than a lexical change.
The space of probability measures has a great deal of structure: it is a convex space and it can be endowed with several topologies. 
So we arrive at the notion of \emph{Halász--Székely barycenter} (or \emph{HS barycenter}) of a probability measure $\mu$ with parameter $c$, which we denote $[\mu]_c$. This is the subject of this paper. It turns out that HS barycenters can be defined directly, without resort to symmetric means or laws of large numbers (see \cref{def.HS}).

Symmetric means are intrinsically discrete objects and do not make sense as barycenters. In \cite[Remark, p.~323]{Bullen}, Bullen briefly proposes a definition of a weighted symmetric mean, only to conclude that ``the properties of this weighted mean are not satisfactory'' and therefore not worthy of further consideration. 
On the other hand, given a finite list $\underline{x} = (x_1,\dots,x_n)$ of nonnegative numbers, we can compare the symmetric means of $\underline{x}$ with the HS barycenter of the associated probability measure $\mu \coloneqq (\delta_{x_1}+\dots+\delta_{x_n})/n$. It turns out that these quantities obey certain precise inequalities (see \cref{t.main}). In particular, we have:
\begin{equation}
\sym_k(\underline{x}) \ge [\mu]_{k/n} \, .
\end{equation}
Furthermore, if $\underline{x}^{(m)}$ denotes the $nm$-tuple obtained by concatenation of $m$ copies of $\underline{x}$, then
\begin{equation}
[\mu]_{k/n} = \lim_{m \to \infty} \sym_{km}\big( \underline{x}^{(m)} \big) \, ,
\end{equation}
and we have precise bounds for the relative error of this approximation, depending only on the parameters and not on the numbers $x_i$ themselves.

Being a natural limit of symmetric means, the HS barycenters deserve to be studied by their own right. One can even argue that they give the ``right'' notion of weighted symmetric means that Bullen was looking for. HS barycenters have rich theoretical properties. They are also cheap to compute, while computing symmetric means involves summing exponentially many terms.

Using our general inequalities and certain continuity properties of the HS barycenters, we are able to obtain in straightforward manner an ergodic theorem that extends the laws of large numbers of Halász--Székely \cite{HS76} and van~Es \cite{vanEs}.

A prominent feature of the symmetric mean \eqref{e.def_sym_mean} is that it vanishes whenever more than $n-k$  of the numbers $x_i$ vanish. Consequently, the HS barycenter $[\mu]_c$ of a probability measure $\mu$ on $[0,+\infty)$ vanishes when $\mu(\{0\}) > 1-c$. 
In other words, once the mass of leftmost point $0$ exceeds the critical value $1-c$, then it imposes itself on the whole distribution, and suddenly forces the mean to agree with it. 
Fortunately, in the subcritical regime, $\mu(\{0\}) < 1-c$, the HS barycenter turns out to be much better behaved. As it will be seen in \cref{s.properties}, in the critical case $\mu(\{0\}) = 1-c$ the HS barycenter can be either positive or zero, so the HS barycenter can actually vary discontinuously. 
Therefore our regularity results and the ergodic theorem must take this critical phenomenon into account. 

This article is organized as follows.
In \cref{s.properties}, we define formally the HS barycenters and prove some of their basic properties. 
In \cref{s.inequalities}, we state and prove the fundamental inequalities relating HS barycenters to symmetric means.
In \cref{s.continuity}, we study the problem of continuity of the HS barycenters with respect to appropriate topologies on spaces of probability measures.
In \cref{s.ergodic}, we apply the results of the previous sections and derive a general ergodic theorem (law of large numbers) for symmetric and HS means. 
In \cref{s.concavity}, we turn back to fundamentals and discuss concavity properties of the HS barycenters and means. 
Finally, in \cref{s.DHS} we introduce a different kind of barycenter which is a natural approximation of the HS barycenter, but has in a sense simpler theoretical properties.



\section{Presenting the HS barycenter}\label{s.properties}


Hardy, Littlewood, and Pólya's axiomatization of (quasiarithmetic) means \cite[\S~6.19]{HLP} is formulated in terms of distribution functions, using Stieltjes integrals. 
Since the first publication of their book in 1934, \emph{measures} became established as fundamental objects in mathematical analysis, probability theory, dynamical systems, etc. 
Spaces of measures have been investigated in depth (see e.g.\ the influential books \cite{Partha_book,Villani}).
The measure-theoretic point of view provides the convenient structure for the analytic study of means or, as we prefer to call them in this case, \emph{barycenters}. 
The simplest example of barycenter is of course the ``arithmetic barycenter'' of a probability measure $\mu$ on Euclidean space $\R^d$, defined (under the appropriate integrability condition) as $\int x   \, d \mu(x)$. 
Another example is the ``geometric barycenter'' of a probability measure $\mu$ on the half-line $(0,+\infty)$, defined as $\exp \left( \int  \log x \, d\mu(x) \right)$.
In this section, we introduce the Hallász--Székely barycenters and study some of their basic properties.

\subsection{Definitions and basic properties}
Throughout this paper we use the following notations:
\begin{equation}
\R_\plus \coloneqq [0,+\infty) \, , \quad \R_{\plus\plus} \coloneqq (0,+\infty) \, .
\end{equation}
We routinely work with the extended line $[-\infty,+\infty]$, endowed with the order topology.

\begin{definition}\label{def.kernel}
The  \emph{Hal\'asz--Sz\'ekely kernel}  (or \emph{HS kernel}) is the following function of three variables $x \in \R_\plus$, $y \in \R_{\plus\plus}$, and $c \in [0,1]$: 
\begin{equation}\label{e.def_kernel}
K(x,y,c) \coloneqq 
\begin{cases}
\log y + c^{-1} \log \left( cy^{-1}x + 1 - c \right) & \text{if } c>0, \\ 
\log y + y^{-1} x -1 & \text{if } c=0. 
\end{cases}
\end{equation}
\end{definition}

\begin{proposition} \label{p.kernel}
The HS kernel has the following properties (see also \cref{fig.K}):
\begin{enumerate}
\item\label{i.kernel_cont} The function $K \colon [0,+\infty) \times (0,+\infty) \times [0,1] \to [-\infty,+\infty)$ is continuous, attaining the value  $-\infty$ only at the points $(0,y,1)$.
\item\label{i.kernel_mono_x} $K(x,y,c)$ is increasing with respect to $x$.
\item\label{i.kernel_mono_c} $K(x,y,c)$ is decreasing with respect to $c$, and strictly decreasing when $x\neq y$.
\item\label{i.kernel_c_1} $K(x,y,1) = \log x$ is independent of $y$.
\item\label{i.kernel_reflex} $K(x,y,c) \ge \log x$, with equality if and only if $x=y$ or $c=1$.
\item For each $y>0$, the function $K(\mathord{\cdot},y,0)$ is affine, and its graph is the tangent line to $\log x$ at $x=y$. 
\item\label{i.kernel_homog} $K(\lambda x, \lambda y, c) = K(x,y,c) + \log \lambda$, for all $\lambda>0$.
\end{enumerate}
\end{proposition}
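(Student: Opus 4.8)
The plan is to reduce everything to a one–variable analysis. I would first prove the scaling identity~(g), which is an immediate substitution into~\eqref{e.def_kernel}, carried out separately in the cases $c>0$ and $c=0$. Writing $\kappa(t,c)\coloneqq K(t,1,c)$ for $t\in\R_\plus$ and $c\in[0,1]$, so that $K(x,y,c)=\log y+\kappa(x/y,c)$ by~(g), the whole proposition reduces to properties of $\kappa(t,c)=c^{-1}\log\!\bigl(1+c(t-1)\bigr)$ for $c>0$ and $\kappa(t,0)=t-1$. Two items then fall out at once: setting $c=1$ gives $\kappa(t,1)=\log t$, which is~(d); and $\kappa(t,0)=t-1$ is affine in $t$ with value $0$ and slope $1$ at $t=1$, matching the value $\log 1$ and the derivative $(\log)'(1)$, which after undoing the normalization is~(f). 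Monotonicity in $x$, item~(b), is a one–line computation: $\partial_x K(x,y,c)=\bigl(cx+(1-c)y\bigr)^{-1}>0$ when $c>0$ and $\partial_x K=y^{-1}>0$ when $c=0$, wherever $K$ is finite.

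For continuity, item~(a), the convenient device is the auxiliary function $\phi(u)\coloneqq u^{-1}\log(1+u)$ for $u\in(-1,+\infty)\setminus\{0\}$, with $\phi(0)\coloneqq 1$, which is continuous on $(-1,+\infty)$, together with the convention $\phi(-1)\coloneqq+\infty$. One checks the identity $\kappa(t,c)=(t-1)\,\phi\!\bigl(c(t-1)\bigr)$, valid for all $t\in\R_\plus$ and $c\in[0,1]$, with the stated conventions covering the cases $c=0$ and $t=1$. Since $c(t-1)\ge -c\ge -1$ for $t\ge 0$ and $c\in[0,1]$, with $c(t-1)=-1$ only at $(t,c)=(0,1)$, this exhibits $K$ as a continuous map $\R_\plus\times\R_{\plus\plus}\times[0,1]\to[-\infty,+\infty)$ whose value $-\infty$ is attained exactly at the points $(0,y,1)$.

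The substantive step is the monotonicity in $c$, item~(c). Fixing $x,y$ and setting $t=x/y$ and $u=c(t-1)$, a direct differentiation gives, for $c>0$,
\[
\partial_c \kappa(t,c)=\frac{1}{c^{2}}\,h(u), \qquad\text{where}\qquad h(u)\coloneqq\frac{u}{1+u}-\log(1+u).
\]
Since $h(0)=0$ and $h'(u)=-u/(1+u)^{2}$, the function $h$ is strictly increasing on $(-1,0)$ and strictly decreasing on $(0,+\infty)$; hence $h\le 0$ on $(-1,+\infty)$ with equality only at $u=0$, that is, only when $t=1$. Therefore $c\mapsto\kappa(t,c)$ is nonincreasing on $(0,1]$, strictly when $t\ne 1$, and by the continuity just established this extends to all of $[0,1]$; translating back, $K(x,y,\mathord{\cdot})$ is decreasing on $[0,1]$, strictly when $x\ne y$. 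Finally item~(e) is a corollary: $K(x,y,c)\ge K(x,y,1)=\log x$ by~(c) and~(d), and for the equality discussion one separates the case $x=y$ (where $\kappa(1,c)\equiv 0$, so $K(x,y,c)\equiv\log x$) from the case $x\ne y$ (where~(c) forces strict inequality whenever $c<1$), the degenerate subcase $x=0$ being covered by the location of the $-\infty$ value found in~(a).

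The main obstacle is the sign analysis of $h$ behind item~(c); once the normalization $y=1$ and the substitution $u=c(x/y-1)$ are in place, the remaining items amount to bookkeeping. A secondary subtlety is making the continuity genuinely \emph{joint} along the slice $c=0$ and near the boundary points $(0,y,1)$, which the auxiliary function $\phi$ handles uniformly.
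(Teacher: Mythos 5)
Your proof is correct and takes essentially the same route as the paper: the one substantive step is the computation of $\partial_c K$ and its sign, and your analysis of $h(u)=\frac{u}{1+u}-\log(1+u)$ is exactly the paper's use of the inequality $\log t\le t-1$ in disguise, followed (as in the paper) by extension to the endpoint values of $c$ via continuity and the deduction of item~(e) from items~(c) and~(d). The normalization through the homogeneity~(g) and the auxiliary function $\phi$ for joint continuity are presentational refinements of details the paper dismisses as immediate, not a different argument.
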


\begin{proof}
Most properties are immediate from \cref{def.kernel}.
To check monotonicity with respect to $c$, we compute the partial derivative when $c>0$:
\begin{equation}
K_c(x,y,c) =
\frac{1}{c^2} \left[ \frac{c(y^{-1}x-1)}{cy^{-1}x + 1 - c} + \log \left( \frac{1}{cy^{-1}x + 1 - c} \right) \right] \, ;
\end{equation}

since $\log t \le t-1$ (with equality only if $t=1$), we conclude that $K_c(x,y,c) \le 0$ (with equality only if $x=y$).
Since $K$ is continuous, we obtain property~\eqref{i.kernel_mono_c}.
Property \eqref{i.kernel_reflex} is a consequence of properties \eqref{i.kernel_mono_c} and \eqref{i.kernel_c_1}.
\end{proof}

\begin{figure}[htb]
\begin{center}
\includegraphics[width=.65\textwidth]{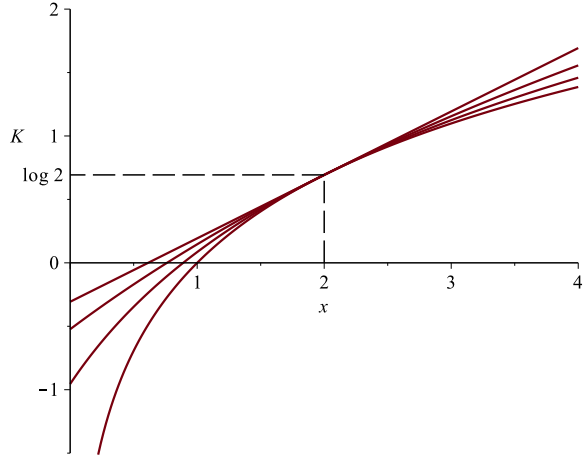}
\caption{Graphs of the functions $K(\mathord{\cdot},y,c)$ for $y=2$ and $c \in \{0,1/3,2/3,1\}$.}\label{fig.K}
\end{center}
\end{figure}

\medskip

Let $\cP(\R_\plus)$ denote the set of all Borel probability measures $\mu$ on $\R_\plus$.
The following is the central concept of this paper:

\begin{definition}\label{def.HS}
Let $c \in [0,1]$ and $\mu \in \cP(\R_\plus)$.
If $c=1$, then we require that the function $\log x$ is semi-integrable\footnote{A function $f$ is called \emph{semi-integrable} if the positive part $f^+ \coloneqq \max(f,0)$ is integrable or the negative part $f^- \coloneqq \max(-f,0)$ is integrable.} with respect to $\mu$. 
The \emph{Hal\'asz--Sz\'ekely barycenter} (or \emph{HS barycenter}) with parameter $c$ of the probability measure $\mu$ is:
\begin{equation}\label{e.def_HS}
[\mu]_c \coloneqq  \exp \inf_{y>0} \int K(x,y,c) \, d\mu(x) \, ,
\end{equation}
where $K$ is the HS kernel \eqref{e.def_kernel}.
\end{definition}

First of all, let us see that the definition is meaningful:
\begin{itemize}
\item If $c<1$, then for all $y>0$, the function $K( \mathord{\cdot}, y, c)$ is bounded from below by $K(0,y,c)>-\infty$, and therefore it has a well-defined integral (possibly $+\infty$); so $[\mu]_c$ is a well-defined element of the extended half-line $[0,+\infty]$.
\item If $c=1$, then by part \eqref{i.kernel_c_1} of \cref{p.kernel}, the defining formula \eqref{e.def_HS} becomes: 
\begin{equation}\label{e.HS1}
[\mu]_1 = \exp \int \log x \, d\mu(x)  \, .
\end{equation}
The integral is a well-defined element of $[-\infty, +\infty]$, so $[\mu]_1$ is well-defined in $[0,+\infty]$. 
\end{itemize}

Formula \eqref{e.HS1} means that the HS barycenter with parameter $c=1$ is the geometric barycenter; let us see that $c=0$ corresponds to the standard arithmetic barycenter: 

\begin{proposition}\label{p.HS0}
For any $\mu \in \cP(\R_\plus)$, we have $[\mu]_0 = \int x \, d\mu(x)$.
\end{proposition}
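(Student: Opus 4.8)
The plan is to unwind \cref{def.HS} in the case $c=0$ and reduce the optimization in $y$ to the elementary inequality $\log t \le t-1$ that already appeared in the proof of \cref{p.kernel}. Write $m \coloneqq \int x \, d\mu(x) \in [0,+\infty]$ for the arithmetic barycenter. For fixed $y>0$ the function $x \mapsto K(x,y,0) = \log y + y^{-1}x - 1$ is affine and increasing, bounded below by $K(0,y,0) = \log y - 1 > -\infty$, so its integral against $\mu$ is a well-defined element of $(-\infty,+\infty]$ and, by linearity of the integral, equals $\log y - 1 + y^{-1} m$ (read as $+\infty$ when $m=+\infty$). Hence $[\mu]_0 = \exp \inf_{y>0}\big(\log y - 1 + y^{-1}m\big)$.

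I would then split into three cases according to the value of $m$. If $m=+\infty$, the quantity inside the infimum is $+\infty$ for every $y>0$, so $[\mu]_0 = \exp(+\infty) = +\infty = m$. If $m=0$, then $\mu=\delta_0$ and the expression is $\log y - 1$, whose infimum over $y>0$ is $-\infty$; thus $[\mu]_0 = \exp(-\infty) = 0 = m$. Finally, if $0<m<+\infty$, apply $\log t \le t-1$ with $t = m/y$ to get $\log m - \log y \le m/y - 1$, that is, $\log y - 1 + y^{-1}m \ge \log m$ for all $y>0$, with equality exactly at $y=m$. Therefore the infimum equals $\log m$ and $[\mu]_0 = \exp(\log m) = m$.

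There is essentially no obstacle beyond bookkeeping with the extended-real conventions; the only point deserving a word of care is that the identity $\int K(x,y,0)\,d\mu(x) = \log y - 1 + y^{-1}m$ remains valid when $m=+\infty$, which is guaranteed by the lower bound on the integrand noted above. Combining the three cases yields $[\mu]_0 = m = \int x\,d\mu(x)$.
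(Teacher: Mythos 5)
Your proof is correct and follows essentially the same route as the paper: unwind \cref{def.HS} at $c=0$, note that $\int K(x,y,0)\,d\mu = \log y + y^{-1}m - 1$, and minimize over $y$ (the paper handles $m=\infty$ separately and computes the infimum directly, which is exactly your argument with the $m=0$ case folded in). The extra detail you give via $\log t \le t-1$ is fine but not a departure from the paper's proof.
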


\begin{proof}
Let $a \coloneqq \int x \, d\mu(x)$.
If $a = \infty$, then for every $y>0$, the non-constant affine function $K(\mathord{\cdot},y,0)$ has infinite integral, so definition \eqref{e.def_HS} gives $[\mu]_0 = \infty$.
On the other hand, if $a<\infty$, then $[\mu]_0$ is defined as $\exp \inf_{y>0} (\log y + a/y -1) = a$. 
\end{proof}

Let $\PHS$ denote the subset formed by those $\mu \in \cP(\R_\plus)$ 
such that:
\begin{equation}\label{e.HS_integrability}
\int  \log (1 + x) \, d\mu(x) < \infty 
\end{equation}
or, equivalently, $\int  \log^+ x \, d\mu < \infty$ (we will sometimes write ``$d\mu$'' instead of ``$d\mu(x)$''). 

\begin{proposition}\label{p.HS_infinity}
Let $c \in (0,1]$ and $\mu \in \cP(\R_\plus)$.
Then $[\mu]_c < \infty$ if and only if $\mu \in \PHS$.
\end{proposition}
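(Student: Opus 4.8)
The plan is to sandwich the HS kernel, for each fixed $y>0$, between two translates of $c^{-1}\log(1+x)$, and then read off the statement from the equivalence $\int\log(1+x)\,d\mu<\infty\iff\int\log^{+}x\,d\mu<\infty$ already built into the definition of $\PHS$.

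First I would treat the case $c=1$ directly. By part~\eqref{i.kernel_c_1} of \cref{p.kernel} and formula \eqref{e.HS1}, $[\mu]_1=\exp\int\log x\,d\mu$, the integral being a well-defined element of $[-\infty,+\infty]$ under the standing semi-integrability hypothesis. Now $\int\log x\,d\mu<+\infty$ holds if and only if $\int\log^{+}x\,d\mu<\infty$, that is, if and only if $\mu\in\PHS$; note that the integral may legitimately equal $-\infty$, which just means $[\mu]_1=0$, still consistent with the claim.

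For $c\in(0,1)$, fix $y>0$ and set $t\coloneqq cy^{-1}x+1-c$. Comparing values at $x=0$ and slopes of these affine functions of $x$ gives the elementary bounds
\[
\min\!\big(1-c,\,cy^{-1}\big)\,(1+x)\ \le\ t\ \le\ \big(1+cy^{-1}\big)\,(1+x)\qquad(x\ge 0),
\]
where the left coefficient is a strictly positive real number. Applying $\log$, dividing by $c$, and adding $\log y$, we obtain finite constants $\beta(y,c)\le\alpha(y,c)$, depending only on $y$ and $c$, with
\[
\beta(y,c)+c^{-1}\log(1+x)\ \le\ K(x,y,c)\ \le\ \alpha(y,c)+c^{-1}\log(1+x)\qquad(x\ge 0).
\]
Since $K(\mathord{\cdot},y,c)$ is bounded below (by $K(0,y,c)>-\infty$), the integral $\int K(x,y,c)\,d\mu$ is a well-defined element of $(-\infty,+\infty]$, and the displayed bounds show it is finite exactly when $\int\log(1+x)\,d\mu<\infty$ — an equivalence that does not involve $y$. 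Consequently $[\mu]_c=\exp\inf_{y>0}\int K(x,y,c)\,d\mu$ is finite precisely when $\mu\in\PHS$: if $\mu\in\PHS$ then every $\int K(x,y,c)\,d\mu$ is finite, so the infimum is $<\infty$; and if $\mu\notin\PHS$ then every such integral equals $+\infty$, so the infimum, hence $[\mu]_c$, equals $+\infty$.

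I do not expect any genuine obstacle beyond bookkeeping. The two points that need a little attention are that for $c<1$ the integrand is bounded below, so no indeterminate form $\infty-\infty$ arises when splitting $\int K(x,y,c)\,d\mu=\beta(y,c)+\int\big(K(x,y,c)-\beta(y,c)\big)\,d\mu$; and that in the case $c=1$ the relevant integral can be $-\infty$, corresponding to $[\mu]_1=0$, which is harmless for the statement.
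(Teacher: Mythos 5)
Your proof is correct and follows essentially the same route as the paper: for $c\in(0,1)$ the paper simply observes that $\left|K(x,y,c)-c^{-1}\log(1+x)\right|$ is bounded in $x$ for each fixed $y>0$, which is exactly what your explicit sandwich $\min(1-c,cy^{-1})(1+x)\le cy^{-1}x+1-c\le(1+cy^{-1})(1+x)$ establishes, and the case $c=1$ is likewise read off from \eqref{e.HS1}. Your write-up just makes the bounding constants explicit and spells out the bookkeeping the paper leaves implicit.
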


\begin{proof}
The case $c=1$ being clear, assume that $c \in (0,1)$.
Note that for all $y>0$, the expression
\begin{equation}
\left | K(x,y,c) -  c^{-1} \log(x+1) \right| \,
\end{equation}
is a bounded function of $x$,
so the integrability of $K(x,y,c)$ and $\log(x+1)$ are equivalent. 
\end{proof}

Next, let us see that the standard properties one might expect for something called a ``barycenter'' are satisfied.
For any $x \ge 0$, we denote by $\delta_x$ the probability measure such that $\delta_x(\{x\})=1$. 

\begin{proposition} 
\label{p.basic}
For all $c \in [0,1]$ and $\mu \in \PHS$, the following properties hold:
\begin{enumerate}

\item\label{i.basic_reflex} 
\emph{Reflexivity:} $[\delta_x]_c = x$, for every $x \ge 0$.

\item\label{i.basic_mono_mu} 
\emph{Monotonicity with respect to the measure:} If $\mu_1$, $\mu_2 \in \PHS$ have distribution functions $F_1$, $F_2$ such that $F_1 \ge F_2$\footnote{I.e., $\mu_2$ is ``more to the right'' than $\mu_1$. This defines a partial order, called \emph{usual stochastic ordering} or \emph{first order stochastic dominance}.}, then $[\mu_1]_c \le [\mu_2]_c$.

\item\label{i.basic_intern} 
\emph{Internality:} If $\mu(I)=1$ for an interval $I \subseteq \R_\plus$, then $[\mu]_c \in I$.

\item\label{i.basic_homog} 
\emph{Homogeneity:} If  $\lambda \ge 0$, and $\lambda_* \mu$ denotes the pushforward of $\mu$ under the map $x \mapsto \lambda x$, then $[\lambda_* \mu]_c = \lambda [\mu]_c$.

\item
\label{i.basic_mono_c} \emph{Monotonicity with respect to the parameter:} 
If $0 \le c' \le c \le 1$, then $[\mu]_{c'} \ge [\mu]_{c}$. 

\end{enumerate}
\end{proposition}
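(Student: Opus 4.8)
The plan is to prove the five items in sequence; each is a matter of pushing a pointwise property of the HS kernel $K$ (from \cref{p.kernel}) through the three operations $\mu\mapsto\int K(\mathord{\cdot},y,c)\,d\mu$, $\inf_{y>0}$, and $\exp$ that make up \eqref{e.def_HS}. \emph{Reflexivity} \eqref{i.basic_reflex}: for $x>0$, part \eqref{i.kernel_reflex} of \cref{p.kernel} gives $K(x,y,c)\ge\log x$ with equality at $y=x$, so $\inf_{y>0}K(x,y,c)=\log x$ and $[\delta_x]_c=x$; for $x=0$ one reads directly from \eqref{e.def_kernel} (or from \eqref{e.HS1} when $c=1$) that $\inf_{y>0}K(0,y,c)=-\infty$, so $[\delta_0]_c=0$. \emph{Homogeneity} \eqref{i.basic_homog}: for $\lambda>0$, the change of variables $\int K(x,y,c)\,d(\lambda_*\mu)(x)=\int K(\lambda x,y,c)\,d\mu(x)$ combined with the substitution $y=\lambda z$ and the kernel identity \eqref{i.kernel_homog} yields $\inf_{y>0}\int K(\mathord{\cdot},y,c)\,d(\lambda_*\mu)=\log\lambda+\inf_{z>0}\int K(\mathord{\cdot},z,c)\,d\mu$, and applying $\exp$ gives $[\lambda_*\mu]_c=\lambda[\mu]_c$; the case $\lambda=0$ reduces to reflexivity for $\delta_0$ (with the convention $0\cdot(+\infty)=0$). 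One also checks along the way that $\lambda_*\mu\in\PHS$ whenever $\mu\in\PHS$.

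\emph{Monotonicity in the parameter} \eqref{i.basic_mono_c} is immediate from part \eqref{i.kernel_mono_c}: $c'\le c$ gives $K(x,y,c')\ge K(x,y,c)$ pointwise, hence $\int K(x,y,c')\,d\mu\ge\int K(x,y,c)\,d\mu$ for every $y>0$, and $\inf_{y>0}$ followed by $\exp$ gives $[\mu]_{c'}\ge[\mu]_c$. \emph{Monotonicity in the measure} \eqref{i.basic_mono_mu} runs the same way, using that $x\mapsto K(x,y,c)$ is nondecreasing (part \eqref{i.kernel_mono_x}) together with the classical fact that first order stochastic dominance $F_1\ge F_2$ forces $\int g\,d\mu_1\le\int g\,d\mu_2$ for every nondecreasing semi-integrable $g$ — apply this to $g=K(\mathord{\cdot},y,c)$ for each $y$, then take $\inf_{y>0}$ and $\exp$. (The classical fact follows, for instance, from the coupling $F_i^{-1}(U)$ with $U$ uniform, or from the layer-cake identity $\int g\,d\mu_i=g(0)+\int_0^\infty g'(t)\,(1-F_i(t))\,dt$.) In both items the integrals are well defined in $[-\infty,+\infty]$: when $c<1$ the integrand is bounded below by its value at $x=0$, and when $c=1$ the hypothesis $\mu\in\PHS$ makes $\log x$ semi-integrable.

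\emph{Internality} \eqref{i.basic_intern} then follows by sandwiching $\mu$ between Dirac masses. Write $\alpha\coloneqq\inf I$ and $\beta\coloneqq\sup I$, so that $\mu([\alpha,\beta]\cap\R_\plus)=1$. If $\alpha>0$ then $\delta_\alpha\in\PHS$ lies below $\mu$ in the stochastic order, and if $\beta<\infty$ then $\mu$ lies below $\delta_\beta\in\PHS$; so \eqref{i.basic_mono_mu} and \eqref{i.basic_reflex} give $\alpha\le[\mu]_c\le\beta$ (the bounds being vacuous when $\alpha=0$ or $\beta=\infty$). When an endpoint is missing from $I$ one upgrades the bound to a strict one: if $\beta\notin I$ and $\beta<\infty$ then $\mu(\{\beta\})=0$, and testing the infimum at $y=\beta$, where $x\mapsto K(x,\beta,c)$ is \emph{strictly} increasing (differentiate \eqref{e.def_kernel}) with $K(\beta,\beta,c)=\log\beta$, gives $\int K(x,\beta,c)\,d\mu<\log\beta$, i.e.\ $[\mu]_c<\beta$; symmetrically, if $\alpha\notin I$ and $\alpha>0$ then $\int K(x,y,c)\,d\mu\ge\int\log x\,d\mu>\log\alpha$ for every $y$, whence $[\mu]_c>\alpha$.

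The only place I expect real friction is the endpoint bookkeeping in \eqref{i.basic_intern} — organising the open/closed and finite/infinite cases — and especially the degenerate missing left endpoint $\alpha=0$, where the strict bound $[\mu]_c>0$ is not elementary: it is exactly the positivity of the HS barycenter in the subcritical regime $\mu(\{0\})<1-c$, which is among the other results of this section. Everything else is a one-line consequence of \cref{p.kernel}; the remaining care is purely about tracking when an integral, or the barycenter itself, is merely $[-\infty,+\infty]$- (resp.\ $[0,+\infty]$-) valued rather than finite, and about the conventions $\exp(-\infty)=0$ and $0\cdot(+\infty)=0$.
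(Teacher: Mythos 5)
Your proof is correct and follows essentially the same route as the paper's: each item is obtained by pushing the corresponding pointwise property of the kernel from \cref{p.kernel} through integration, $\inf_{y>0}$ and $\exp$, and internality is obtained by sandwiching $\mu$ between Dirac masses using reflexivity and monotonicity. The one place you go beyond the paper is the endpoint analysis for internality: the paper's one-line argument only yields the non-strict bounds $\alpha \le [\mu]_c \le \beta$, i.e.\ membership in the closure of $I$, which is evidently the intended reading (compare the remark right after the proposition that internality is \emph{not} strict). Your strict bounds at a missing \emph{finite, positive} endpoint are fine, but the degenerate case $\alpha = 0 \notin I$ is not merely ``non-elementary'': for $c=1$ the subcritical condition $\mu(\{0\})<1-c$ you invoke cannot hold, and the strict conclusion genuinely fails — e.g.\ a measure carried by $(0,1]$ with $\int \log x \, d\mu = -\infty$ has $[\mu]_1 = 0 \notin (0,1]$ by \eqref{e.HS1} — so the literal open-interval reading of item (c) is untenable at $c=1$, while for $c<1$ your deferral to the positivity results proved later in the section (\cref{p.critical_supercritical}, \cref{p.zeros}) does close that case, at the cost of a forward reference the paper itself does not need because it only asserts the non-strict version.
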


\begin{proof}
The proofs use the properties of the HS kernel listed in \cref{p.kernel}.
Reflexivity is obvious when when $c=1$ or $x=0$, and in all other cases follows from property \eqref{i.kernel_reflex}. 
Monotonicity with respect to the measure 
is a consequence of the fact that the HS kernel is increasing in $x$. 
The internality property of the HS barycenter follows from reflexivity and monotonicity.
Homogeneity follows from property \eqref{i.kernel_homog} of the HS kernel and the change of variables formula.
Finally, monotonicity with respect to the parameter $c$ is a consequence of the corresponding property of the HS kernel.
\end{proof}

As it will be clear later (see \cref{ex.Bernoulli}), the internality and the monotonicity properties (w.r.t.\ $\mu$ and w.r.t.\ $c$) are not strict.

\subsection{Computation and critical phenomenon}

In the remaining of this section, we discuss how to actually compute HS barycenters. 
In view of \cref{p.HS_infinity}, we may focus on measures in $\PHS$.
The mass of zero plays an important role. 
Given $c \in (0,1)$ and $\mu \in \PHS$,  we use the following terminology, where $\mu(0) = \mu(\{0\})$:
\begin{equation}\label{e.def_cases}
\left.
\begin{tabular}{l r}
subcritical case:        & $\mu(0) < 1-c$ \\ 
critical case:           & $\mu(0) = 1-c$ \\ 
supercritical case:      & $\mu(0) > 1-c$ 
\end{tabular}
\qquad
\right\}
\end{equation}
The next result establishes a way to compute $[\mu]_c$ in the subcritical case; the remaining cases will be dealt with later in \cref{p.critical_supercritical}.

\begin{proposition}\label{p.subcritical} 
If $\mu \in \PHS$, $c \in (0, 1)$
and $\mu(0) < 1-c$ (subcritical case), then the equation
\begin{equation}\label{e.eta_abstract}
\int K_y (x,\eta,c) \, d \mu(x) = 0
\end{equation}
has a unique positive and finite solution $\eta = \eta(\mu,c)$, 
and the $\inf$ in formula \eqref{e.def_HS} is attained uniquely at $y=\eta$; in particular,
\begin{equation}\label{e.sub_conclusion}
[\mu]_c = \exp \int K(x, \eta, c) \, d \mu(x).
\end{equation}
\end{proposition}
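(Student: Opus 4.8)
The plan is to treat the map $y \mapsto g(y) \coloneqq \int K(x,y,c)\, d\mu(x)$ as a convex function of the variable $t = \log y \in \R$ and show that, in the subcritical regime, it is coercive (tends to $+\infty$ at both ends), so that its infimum is attained at an interior critical point, which by strict convexity is unique and characterized by the vanishing of the derivative. Concretely, from the formula for $K$ one computes $K_y(x,y,c) = y^{-1}\bigl(1 - \tfrac{x}{cy^{-1}x+1-c}\bigr) \cdot$ (a positive factor), and more usefully $\partial_t K(x,e^t,c) = 1 - \dfrac{x}{c e^{-t} x + 1 - c}$, which lies in the interval $[\,-\tfrac{1-c}{c}\cdot\text{(something)}\,,\,1\,]$; the key point is that it is bounded, monotincreasing in $t$ for fixed $x$, and as $t \to -\infty$ it decreases to $1 - \tfrac{x}{1-c}$ while as $t\to+\infty$ it increases to $1$ if $x>0$ and equals $0$ if $x=0$. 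Dominated convergence (the integrands are uniformly bounded once $c<1$ is fixed) then gives
\begin{equation}
\lim_{t\to-\infty} g'(e^t)\cdot e^t = 1 - \frac{1}{1-c}\int_{(0,\infty)} \frac{x}{?}\,\dots
\end{equation}
— more precisely $\lim_{t\to-\infty}\partial_t g = \mu(0)\cdot 1 + \int_{(0,\infty)}\bigl(1 - \tfrac{x}{1-c}\bigr)\mathbf 1_{\{x<1-c\}}\dots$; here I should be careful: the pointwise limit of $1 - \tfrac{x}{1-c}$ is not integrable-friendly, so instead I use that $\partial_t K(x,e^t,c)\le 1$ always and $\partial_t K(x,e^t,c)\to 1-\tfrac{x}{1-c}<-\infty$ is false — rather one argues that for each fixed $x>0$ the limit as $t\to-\infty$ of $\partial_t K(x,e^t,c)$ equals $1 - x/(1-c)$ only when this is a genuine limit, but $\partial_t K$ stays bounded below by, say, $1 - \tfrac{1}{c}$... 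I will instead split: the contribution of the atom at $0$ to $\lim_{t\to-\infty}\partial_t g$ is $\mu(0)$, and the contribution of $\mu|_{(0,\infty)}$ is $\le 0$ and is $<0$ unless that part is null; so $\lim_{t\to-\infty}\partial_t g \le \mu(0) < 1-c \le 1$, and in fact one shows it is $\le \mu(0) - (1-\mu(0)) \cdot 0$... the clean statement I will prove is: $\limsup_{t\to-\infty}\partial_t g < 0$ precisely because $\mu(0) < 1-c$ forces the $(0,\infty)$-part to overwhelm the atom. Dually, $\lim_{t\to+\infty}\partial_t g = \mu\big((0,\infty)\big)\cdot 1 + \mu(0)\cdot 0 = 1 - \mu(0) > 0$.

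The main steps, in order: (1) record $K_y$ and rewrite everything in the coordinate $t=\log y$; note $h_x(t)\coloneqq K(x,e^t,c)$ is convex in $t$ for each fixed $x\ge 0$ (check $\partial_t^2 h_x\ge 0$ from the explicit derivative, with strict inequality when $x\neq e^t$), hence $g$ is convex and, since $x\not\equiv e^t$ under $\mu$ unless $\mu$ is a point mass, strictly convex on $\R$. (2) Establish the boundedness of $\partial_t h_x(t)$ uniformly in $t$ and locally uniformly in a way that legitimizes differentiation under the integral sign and dominated convergence as $t\to\pm\infty$. (3) Compute $\lim_{t\to+\infty}\partial_t g = 1-\mu(0) > 0$ and $\lim_{t\to-\infty}\partial_t g \le \mu(0) < 1 - c$; the subcritical hypothesis is exactly what makes the latter limit strictly negative (here one uses that for $x>0$, $\partial_t h_x(t)\to 1 - x/(1-c)$, which is $<1$, and an integrability/truncation argument — bounding $\partial_t h_x(t)$ above by $1$ and using $\mu(0)<1-c$ together with Fatou — yields $\liminf_{t\to-\infty}\partial_t g < 0$). (4) Conclude by the intermediate value theorem that $\partial_t g$ vanishes at some $t=\log\eta$, unique by strict monotonicity of $\partial_t g$ (strict convexity of $g$), with $\eta\in(0,\infty)$; this $\eta$ is the unique minimizer, giving \eqref{e.eta_abstract} and \eqref{e.sub_conclusion}. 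Note \eqref{e.eta_abstract} with $K_y$ matches $\partial_t g = 0$ since $\partial_t h_x = e^t K_y(x,e^t,c)$ and $e^t>0$.

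The hard part will be step (3), and specifically the $t\to-\infty$ limit: one must show that the subcritical inequality $\mu(0)<1-c$ genuinely forces $\partial_t g$ to become negative, rather than merely nonpositive, and do so while controlling the integral $\int_{(0,\infty)}\partial_t h_x(t)\,d\mu(x)$ whose integrand is bounded above by $1$ but not obviously uniformly bounded below. I expect the right move is to fix a small $\epsilon>0$, write $\partial_t g(t) \le \mu(0) + \mu\big((0,\epsilon e^t)\big) + \sup_{x\ge\epsilon e^t}\partial_t h_x(t)\cdot \mu\big([\epsilon e^t,\infty)\big)$ and note that the last supremum tends to $1 - \epsilon/(1-c)$... this needs a cleaner bound. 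Alternatively: for any $M>0$, $\partial_t h_x(t) \le 1 - \tfrac{x}{ce^{-t}x+1-c}$; when $x\ge M$ and $t$ is very negative, $ce^{-t}x$ dominates, so $\partial_t h_x(t)\to 1$ — this is bad. So the correct decomposition is by the size of $e^{-t}x$: the integrand is close to $1$ only where $x$ is comparable to or larger than $e^t$, i.e.\ on a set of $\mu$-mass $\to \mu((0,\infty)) = 1-\mu(0)$ as $t\to-\infty$ — wait, that set grows. I think the genuinely correct statement is $\lim_{t\to-\infty}\partial_t g(t) = 1 - \mu(0)\cdot\tfrac{1}{1-c}\cdot(1-c)$, i.e. it is NOT $\le\mu(0)$; let me recompute: as $t\to-\infty$, $e^{-t}\to\infty$, so for $x>0$, $ce^{-t}x + 1-c\to\infty$ and $\tfrac{x}{ce^{-t}x+1-c}\to \tfrac{x}{\infty}$ — but $x$ is fixed, so this $\to 0$, hence $\partial_t h_x(t)\to 1$ for every fixed $x>0$, and $\partial_t h_0(t) = 0$. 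Thus the honest limit is $\lim_{t\to-\infty}\partial_t g(t) = 1-\mu(0)$ as well?! That would contradict coercivity. The resolution — and this is the delicate point I must get exactly right — is that dominated convergence fails as $t\to-\infty$ unless $\mu\in\PHS$ controls things, and in fact for $c<1$ the correct asymptotic involves the scale $e^t$: substituting and using $\mu\in\PHS$ one finds $\partial_t g(t) \to \mu(0) - (1-c)^{-1}\cdot 0$... I will resolve this by the substitution $u = e^{-t}$ and a careful monotone/dominated convergence argument keyed to the condition $\mu(0)<1-c$, which is precisely the content of the proposition and the crux of the proof; I expect the cited condition $\int\log(1+x)\,d\mu<\infty$ to enter here to justify the limit. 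Once the two one-sided limits of $\partial_t g$ are correctly shown to have opposite strict signs under subcriticality, the rest is the routine convexity argument above.
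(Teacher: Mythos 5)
Your overall plan (differentiate under the integral sign, show the derivative is monotone in $y$, compute its limits at the two ends, and apply the intermediate value theorem plus monotonicity for existence, uniqueness and minimality) is exactly the paper's argument, but the decisive step~(3) is left genuinely unresolved, and the confusion that blocks you is a computational slip. The correct derivative is
\begin{equation}
\partial_t K(x,e^t,c) \;=\; e^t\,K_y(x,e^t,c) \;=\; 1-\frac{e^{-t}x}{c\,e^{-t}x+1-c} \;=\; 1-\frac{x}{cx+(1-c)e^t}\,,
\end{equation}
whereas you wrote $1-\frac{x}{c e^{-t}x+1-c}$, dropping the factor $e^{-t}$ in the numerator. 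With the correct expression the whole difficulty you wrestle with evaporates: the integrand is bounded between $1-\tfrac1c$ and $1$ for all $t$ and all $x$, it is increasing in $t$, and its pointwise limit as $t\to-\infty$ is $1-\tfrac1c$ for $x>0$ and $1$ for $x=0$ (note also that $\partial_t K(0,e^t,c)=1$, not $0$ as you assert at one point --- this matters, because it is exactly the atom at $0$ pulling the limit up that makes the subcritical hypothesis necessary). Bounded convergence then gives $\lim_{t\to-\infty}\partial_t g=1-\frac{1-\mu(0)}{c}$, which is strictly negative precisely when $\mu(0)<1-c$, and $\lim_{t\to+\infty}\partial_t g=1>0$. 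In particular the ``paradox'' you arrive at (``$\partial_t h_x(t)\to1$ for every fixed $x>0$ as $t\to-\infty$, contradicting coercivity'') is an artifact of the wrong formula, and your expectation that the condition $\int\log(1+x)\,d\mu<\infty$ must be invoked to rescue the limit is a red herring: no integrability beyond boundedness of the integrand is needed at this step. Since you explicitly defer this limit computation (``which is precisely the content of the proposition and the crux of the proof'') to an unspecified future argument, the proof as written is incomplete; once the derivative is corrected, the rest of your outline (strict monotonicity of $\partial_t g$ because $\Delta_y(x,y)=\frac{(1-c)x}{(cx+(1-c)y)^2}>0$ for $x>0$ and $\mu\neq\delta_0$ --- strictness has nothing to do with $x\neq e^t$ --- hence a unique zero $\eta$, with $g$ decreasing before and increasing after it) goes through and coincides with the paper's proof.
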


\begin{proof}
Fix $\mu$ and $c$ as in the statement.
We compute the partial derivative:
\begin{equation}\label{e.Delta}
K_y (x,y,c) = \frac{\Delta(x,y)}{y} \, , \quad \text{where} \quad
\Delta(x,y) \coloneqq  1 - \frac{x}{cx+(1-c)y} \, .
\end{equation}
Since $\Delta$ is bounded, we are allowed to differentiate under the integral sign:
\begin{equation}\label{e.silly}
\frac{d}{dy} \int K(x,y,c) \, d\mu 
= \int K_y (x,\eta,c) \, d \mu \\
= \frac{\psi(y)}{y} \, , 
\end{equation}
where $\psi(y) \coloneqq \int \Delta(x,y) \, d\mu$.
The partial derivative
\begin{equation}\label{e.Delta_y}
\Delta_y(x,y) =  \frac{(1-c)x}{(cx+(1-c)y)^2} 
\end{equation}
is positive, except at $x=0$.
Since $\mu \neq \delta_0$, the function $\psi$ is strictly increasing.
Furthermore,
\begin{equation}
\lim_{y \to +\infty} \Delta(x,y) = 1  \quad \text{and} \quad
\lim_{y \to 0^+} \Delta(x,y) = 
\begin{cases}
1-1/c &\text{if $x>0$},\\
1 &\text{if $x=0$},
\end{cases}
\end{equation}
and so
\begin{equation}
\lim_{y \to +\infty} \psi(y) = 1  \quad \text{and} \quad
\lim_{y \to 0^+} \psi(y) =  1 - \frac{1-\mu(0)}{c} < 0 \, ,
\end{equation}
using the assumption $\mu(0)<1-c$.
Therefore there exists a unique $\eta>0$ that solves the equation $\psi(\eta)=0$, or equivalenlty equation \eqref{e.eta_abstract}. 
By \eqref{e.silly}, the function $y \mapsto \int K(x,y,c) \, d\mu$ decreases on $(0,\eta]$ and increases on $[\eta,+\infty)$, and so attains its infimum at $\eta$. Formula \eqref{e.sub_conclusion} follows from the definition of $[\mu]_c$.
\end{proof}

Let us note that, as a consequence of \eqref{e.Delta}, equation~\eqref{e.eta_abstract} is equivalent to:
\begin{equation}\label{e.eta}
\int \frac{x}{cx+(1-c)\eta} \, d \mu(x) = 1 \, .
\end{equation}

\begin{remark}\label{r.eta_other_cases}
If $\mu$ belongs to $\cP(\R_\plus)$ but not to $\PHS$, and still $c \in (0,1)$, then equation~\eqref{e.eta_abstract} (or its equivalent version \eqref{e.eta}) still has a unique positive and finite solution $\eta = \eta(\mu,c)$, and formula \eqref{e.sub_conclusion} still holds.
On the other hand, if $c=0$ and $\int x \, d\mu(x) < \infty$, then all conclusions of  \cref{p.subcritical} still hold, with a similar proof.
\end{remark}

We introduce the following auxiliary function, plotted in \cref{fig.B}:
\begin{equation} \label{e.def_B}
B(c) \coloneqq 
\begin{cases}
0 &\text{if } c=0, \\ 
c(1-c)^\frac{1-c}{c} &\text{if } 0<c<1, \\ 
1 &\text{if } c=1.
\end{cases}
\end{equation}

\begin{figure}[htb]
\begin{center}
\includegraphics[width=.5\textwidth]{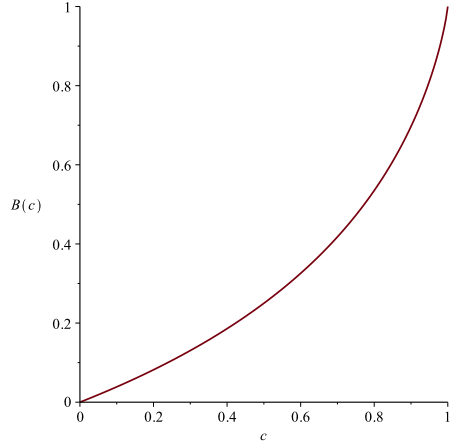}
\caption{Graph of the function $B$ defined by \eqref{e.def_B}.}\label{fig.B}
\end{center}
\end{figure}

The following alternative formula for the HS barycenter matches the original one  from \cite{HS76}, and in some situations is more convenient: 
\begin{proposition}\label{p.oldstyle}
If $0 < c\le 1$ and $\mu\in \PHS$, then:
\begin{equation}\label{e.oldstyle}
[\mu]_c = B(c) \, \inf_{r>0} \left\{ r^{-\frac{1-c}{c}} \exp \left[ c^{-1} \int \log (x+r) \, d\mu(x) \right] \right\} \, .
\end{equation}
Furthermore, if $\mu(0)<1-c$, then the $\inf$ is attained at the unique positive finite solution $\rho = \rho(\mu,c)$ of the equation 
\begin{equation}\label{e.rho}
\int \frac{x}{x+\rho} \, d\mu(x) = c \, .
\end{equation}
\end{proposition}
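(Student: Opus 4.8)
The plan is to reduce \eqref{e.oldstyle} to the already-proved \cref{def.HS} by an explicit change of variables in the infimum, and then identify the optimizer via \cref{p.subcritical}.

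First I would perform the substitution relating the variable $r>0$ in \eqref{e.oldstyle} to the variable $y>0$ in \eqref{e.def_HS}. The natural guess, read off from the kernel formula \eqref{e.def_kernel}, is $r = \tfrac{1-c}{c}\,y$, since the argument $cy^{-1}x + 1 - c$ of the logarithm in $K$ equals $\tfrac{c}{y}(x + \tfrac{1-c}{c}y) = \tfrac{c}{y}(x+r)$. With this substitution, for fixed $y$ (equivalently fixed $r$) I would compute
\begin{equation}
\int K(x,y,c)\,d\mu(x) = \log y + c^{-1}\int \log\!\big(\tfrac{c}{y}(x+r)\big)\,d\mu(x) = \log y + c^{-1}\log\tfrac{c}{y} + c^{-1}\int \log(x+r)\,d\mu(x),
\end{equation}
using that $\mu$ is a probability measure. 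Now $\log y + c^{-1}\log(c/y) = (1-c^{-1})\log y + c^{-1}\log c$, and substituting $y = \tfrac{c}{1-c}r$ turns the $y$-dependent part into $(1-c^{-1})\log r + (1-c^{-1})\log\tfrac{c}{1-c} + c^{-1}\log c$. A short computation shows the constant terms assemble exactly into $\log B(c)$: indeed $\log B(c) = \log c + \tfrac{1-c}{c}\log(1-c)$, and one checks $(1-c^{-1})\log\tfrac{c}{1-c} + c^{-1}\log c = \log c + \tfrac{1-c}{c}\log(1-c)$. Hence $\int K(x,y,c)\,d\mu = \log B(c) - \tfrac{1-c}{c}\log r + c^{-1}\int\log(x+r)\,d\mu$, and exponentiating and taking the infimum over $r>0$ (which corresponds bijectively to the infimum over $y>0$) yields \eqref{e.oldstyle}.

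One technical point to address is the finiteness of $\int \log(x+r)\,d\mu(x)$: since $\mu \in \PHS$ we have $\int \log^+ x\,d\mu < \infty$, and $\log(x+r)$ differs from $\log^+ x$ by a $\mu$-integrable (in fact bounded, for fixed $r>0$) amount when $r\ge 1$, while for $r<1$ the integral could be $-\infty$ but is still well-defined in $[-\infty,\infty)$; so all the manipulations above are legitimate as identities in $[-\infty,+\infty)$, matching the semi-integrability already handled in \cref{def.HS} and the surrounding discussion.

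For the second assertion, in the subcritical case $\mu(0) < 1-c$, \cref{p.subcritical} tells us the infimum in \eqref{e.def_HS} is attained at the unique $y = \eta$ solving \eqref{e.eta}, i.e. $\int \frac{x}{cx+(1-c)\eta}\,d\mu = 1$. Under the correspondence $r = \tfrac{1-c}{c}y$, setting $\rho = \tfrac{1-c}{c}\eta$, I would rewrite $cx + (1-c)\eta = c x + c\rho = c(x+\rho)$, so \eqref{e.eta} becomes $\int \frac{x}{c(x+\rho)}\,d\mu = 1$, which is precisely \eqref{e.rho}. Since the change of variables is a bijection of $(0,\infty)$ onto itself, the infimum in \eqref{e.oldstyle} is attained at this $\rho$, and it is the unique positive finite solution of \eqref{e.rho} because $\eta$ was unique. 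I expect no serious obstacle here; the only thing requiring care is bookkeeping the logarithmic constants so that they collapse exactly to $\log B(c)$, and confirming that the substitution is valid (monotone bijection) so that infima correspond.
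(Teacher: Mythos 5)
Your argument is exactly the paper's proof: the substitution $r=\tfrac{1-c}{c}\,y$ in \eqref{e.def_HS}, the bookkeeping collapsing the constants to $\log B(c)$, and the translation of \eqref{e.eta} into \eqref{e.rho} via $\rho=\tfrac{1-c}{c}\,\eta$, all of which you carry out correctly (note in passing that for $c<1$ the integral $\int\log(x+r)\,d\mu$ is in fact finite, being bounded below by $\log r$). The only point you omit is the endpoint $c=1$, where the substitution degenerates ($r\equiv 0$): there the statement reads $[\mu]_1=\inf_{r>0}\exp\int\log(x+r)\,d\mu$, which follows in one line from monotone convergence as $r\to 0^+$ (the paper dismisses this case as obvious), so your proof should record that separate, easy argument to cover the full range $0<c\le 1$.
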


\begin{proof}
The formula is obviously correct if $c=1$.
If $0<c<1$, we introduce the variable $r \coloneqq \frac{1-c}{c} \, y$ in formula \eqref{e.def_HS} and manipulate. 
Similarly, \eqref{e.eta} becomes \eqref{e.rho}.
\end{proof}

If $\mu$ is a Borel probability measure on $\R_{\plus}$ not entirely concentrated at zero (i.e., $\mu \neq \delta_0$) then we denote by  $\mu^\plus$  the probability measure obtained by conditioning on the event $\R_{\plus\plus} =(0,\infty)$, that is, 
\begin{equation}\label{e.conditional}
\mu^\plus(U) \coloneqq \frac{\mu(U \cap \R_{\plus\plus})}{\mu(\R_{\plus\plus})}, \quad \text{for every Borel set } U \subseteq \R \, .
\end{equation}
Obviously, if $\mu \in \PHS$, then $\mu^\plus \in \PHS$ as well.

\begin{proposition} \label{p.induce}
Let $\mu \in \PHS$     and  $p \coloneqq 1-\mu(0)$.
If $c \in (0,1]$ and $c \le p$ (critical or subcritical cases), then
\begin{equation}\label{e.induce}
[\mu]_c= \frac{B(c)}{B(c/p)} [\mu^\plus]_{c/p} \, .
\end{equation}
\end{proposition}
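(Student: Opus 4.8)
The plan is to reduce the HS barycenter of $\mu$ to that of its conditional measure $\mu^\plus$ by exploiting the alternative formula \eqref{e.oldstyle} from \cref{p.oldstyle}, since the factors $B(c)$ appearing on both sides of \eqref{e.induce} strongly suggest that the cleanest path runs through that representation rather than through the original definition \eqref{e.def_HS}. Writing $p \coloneqq 1 - \mu(0) = \mu(\R_{\plus\plus})$, I would first observe that for any $r > 0$ one has the decomposition
\begin{equation}
\int_{\R_\plus} \log(x+r) \, d\mu(x) = \mu(0) \log r + p \int_{\R_{\plus\plus}} \log(x+r) \, d\mu^\plus(x),
\end{equation}
simply by splitting the domain of integration at $\{0\}$ and using $\int d\mu^\plus = 1$. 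Substituting this into the bracketed expression inside the infimum in \eqref{e.oldstyle}, the $r$-dependent part becomes
\begin{equation}
r^{-\frac{1-c}{c}} \exp\!\left[ c^{-1}\Big( \mu(0)\log r + p \int \log(x+r)\, d\mu^\plus \Big) \right]
= r^{-\frac{1-c}{c} + \frac{\mu(0)}{c}} \exp\!\left[ \frac{p}{c} \int \log(x+r)\, d\mu^\plus \right].
\end{equation}

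The key arithmetic step is to check that the new exponent of $r$ matches the one that appears when \eqref{e.oldstyle} is applied to $\mu^\plus$ with parameter $c/p$. Indeed,
\begin{equation}
-\frac{1-c}{c} + \frac{\mu(0)}{c} = -\frac{1-c-\mu(0)}{c} = -\frac{p-c}{c} = -\frac{1 - c/p}{c/p},
\end{equation}
using $\mu(0) = 1-p$ in the middle and dividing numerator and denominator by $p$ at the end. Likewise $\frac{p}{c} = \frac{1}{c/p}$, so the whole bracketed quantity is exactly
$r^{-\frac{1 - c/p}{c/p}} \exp\!\big[ (c/p)^{-1} \int \log(x+r)\, d\mu^\plus \big]$,
i.e.\ the expression whose infimum over $r>0$ computes $[\mu^\plus]_{c/p} / B(c/p)$ via \eqref{e.oldstyle} applied to $\mu^\plus$. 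Here I need $c/p \in (0,1]$ so that \cref{p.oldstyle} is applicable to $\mu^\plus$; this is precisely the hypothesis $c \le p$ (with $c/p = 1$ in the critical case, which is still covered). Taking infima over $r>0$ on both sides and reinstating the factor $B(c)$ from \eqref{e.oldstyle} for $\mu$ yields
\begin{equation}
[\mu]_c = B(c) \cdot \frac{[\mu^\plus]_{c/p}}{B(c/p)},
\end{equation}
which is \eqref{e.induce}.

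The only genuine subtlety — really the one point deserving care rather than being a routine manipulation — is the bookkeeping of the degenerate possibilities: if $\mu = \delta_0$ then $p = 0$ and the statement is vacuous (the hypothesis $c \le p$ forces $c = 0$, excluded by $c \in (0,1]$), so one may assume $p > 0$ throughout and $\mu^\plus$ is well-defined; and when $c = p$ one has $c/p = 1$, so \eqref{e.oldstyle} for $\mu^\plus$ degenerates to $[\mu^\plus]_1 = B(1)\inf_{r>0}\{ r^0 \exp \int \log(x+r)d\mu^\plus\}$, which by monotone convergence as $r \to 0^+$ (and $\mu^\plus(\{0\}) = 0$) equals $\exp \int \log x \, d\mu^\plus = [\mu^\plus]_1$, consistent with \eqref{e.HS1}. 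One should also confirm that all integrals involved are well-defined elements of $[-\infty,+\infty]$: since $\mu \in \PHS$ implies $\mu^\plus \in \PHS$ (noted just before the statement), the positive parts $\log^+(x+r)$ are $\mu$- and $\mu^\plus$-integrable for each fixed $r$, so the manipulations above are legitimate even when the common value is $0$ or $+\infty$. Apart from this routine case-checking, the argument is the short computation above.
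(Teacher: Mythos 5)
Your proof is correct and takes essentially the same route as the paper's: both pass through the alternative formula \eqref{e.oldstyle}, split $\int \log(x+r)\,d\mu$ at $\{0\}$ so that the term $\mu(0)\log r$ is absorbed into the power of $r$, and identify the resulting expression as formula \eqref{e.oldstyle} applied to $\mu^\plus$ with parameter $c/p$. The extra checks you add (the vacuous case $\mu=\delta_0$ and the critical case $c/p=1$) are fine but already covered, since \cref{p.oldstyle} allows the parameter to equal $1$.
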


\begin{proof}
Note that $\mu \neq \delta_0$, so the positive part $\mu^\plus$ is defined. 
Using formula \eqref{e.oldstyle}, we have:
\begin{align}
[\mu]_c
&= B(c) \inf_{r >0} r^{1-\frac{1}{c}} \exp \left(\frac{1}{c} \int \log (x+r) d \mu		\right)  \\
\displaybreak[0]
&= B(c) \inf_{r >0} r^{1-\frac{1}{c}} \exp \left(\frac{1}{c} \left( \mu(0) \log r +  \int_{\{x>0\} } \log (x+r) d \mu \right) \right) \\
\displaybreak[0]
&= B(c) \inf_{r >0} r^{1-\frac{1}{c}+ \frac{1-p}{c}} \exp \left(\frac{1}{c} \int_{\{x>0\} } \log (x+r) d \mu		\right) \\
\displaybreak[0]
&= B(c) \inf_{r >0} r^{1- \frac{p}{c}} \exp \left(\frac{1}{c/p}  \int \log(x+r) d \mu^\plus		\right) \, . \label{e.almost_induce}
\end{align}
At this point, the assumption $c \le p$ guarantees that the barycenter $[\mu^\plus]_{c/p}$ is well-defined, and using \eqref{e.oldstyle} again we obtain \eqref{e.induce}.
\end{proof}

Finally, we compute the HS barycenter in the critical and supercritical cases:
\begin{proposition}\label{p.critical_supercritical}
Let $\mu \in \PHS$ and $c \in (0,1]$.
\begin{enumerate}
\item\label{i.critical} 
\emph{Critical case:} If $\mu(0)=1-c$, then $[\mu]_c = B(c) [\mu^\plus]_1$. 
\item\label{i.supercritical}  
\emph{Supercritical case:} If $\mu(0)>1-c$, then $[\mu]_c = 0$.
\end{enumerate}
In both cases above, the infimum in formula \eqref{e.def_HS} is \emph{not} attained.
\end{proposition}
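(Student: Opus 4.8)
The plan is to route both parts through the alternative formula \eqref{e.oldstyle}: for $\mu \in \PHS$ and $c \in (0,1]$ one has $[\mu]_c = B(c)\inf_{r>0} g(r)$, where $g(r) \coloneqq r^{-\frac{1-c}{c}}\exp\bigl[c^{-1}\int \log(x+r)\,d\mu(x)\bigr]$. The first move is to peel off the atom at $0$: writing $\int \log(x+r)\,d\mu = \mu(0)\log r + \int_{\R_{\plus\plus}}\log(x+r)\,d\mu$ turns the infimand into $g(r) = r^{\frac{\mu(0)-(1-c)}{c}}\,e^{G(r)/c}$, where $G(r)\coloneqq \int_{\R_{\plus\plus}}\log(x+r)\,d\mu$. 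Everything now hinges on the sign of $\mu(0)-(1-c)$, which is precisely what distinguishes the critical and supercritical regimes. I would also record a few routine facts about $G$: it is finite for every $r>0$ (bounded below by $\mu(\R_{\plus\plus})\log r$, and above for $r\le 1$ by $C\coloneqq \int\log(1+x)\,d\mu<\infty$, since $\mu\in\PHS$); it is non-decreasing in $r$; and $G(r)\downarrow \int_{\R_{\plus\plus}}\log x\,d\mu\in[-\infty,+\infty)$ as $r\downarrow 0$, by monotone convergence (the family $\log(x+r)$ decreases to $\log x$ and is dominated from above near $r=0$ by the integrable function $\log(1+x)$).

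For the critical case $\mu(0)=1-c$, the exponent of $r$ vanishes and $1-\mu(0)=c$, so $g(r)=e^{G(r)/c}=\exp\bigl[\int\log(x+r)\,d\mu^\plus\bigr]$. The integral $\int\log(x+r)\,d\mu^\plus$ takes finite values and is strictly increasing in $r$, so $g$ is strictly increasing on $(0,+\infty)$; it is bounded near $r=0$; and $g(r)\to\exp\int\log x\,d\mu^\plus=[\mu^\plus]_1$ as $r\downarrow 0$. Hence $\inf_{r>0}g(r)=[\mu^\plus]_1$, this infimum is not attained, and $[\mu]_c=B(c)[\mu^\plus]_1$. Alternatively one may simply apply \cref{p.induce} with $p\coloneqq 1-\mu(0)=c$, for which the hypothesis $c\le p$ holds with equality, to obtain $[\mu]_c=\tfrac{B(c)}{B(1)}[\mu^\plus]_1=B(c)[\mu^\plus]_1$, and deduce non-attainment from the strict monotonicity of $g$ just noted.

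For the supercritical case $\mu(0)>1-c$, set $\alpha\coloneqq\frac{\mu(0)-(1-c)}{c}>0$. For $r\in(0,1]$ we get $0\le g(r)= r^{\alpha}\,e^{G(r)/c}\le e^{C/c}\,r^{\alpha}$, so $g(r)\to 0$ as $r\downarrow 0$, whence $\inf_{r>0}g(r)=0$ and $[\mu]_c=B(c)\cdot 0=0$. Again $g$ is strictly increasing on $(0,+\infty)$, being the product of the strictly increasing positive function $r\mapsto r^{\alpha}$ with the non-decreasing positive function $r\mapsto e^{G(r)/c}$; since $r=\tfrac{1-c}{c}\,y$ is a bijection of $(0,+\infty)$ onto itself when $c<1$, the infimum in \eqref{e.def_HS} is likewise not attained. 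One can also see non-attainment directly: an attained infimum would force $\int K(x,y_0,c)\,d\mu=-\infty$ for some $y_0>0$, contradicting $K(x,y_0,c)\ge K(0,y_0,c)=\log y_0+c^{-1}\log(1-c)>-\infty$, which holds by monotonicity of $K$ in its first variable, part \eqref{i.kernel_mono_x} of \cref{p.kernel}.

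I expect the only real work to be bookkeeping at the boundary $c=1$ and at $\mu=\delta_0$, where \eqref{e.oldstyle} degenerates: when $c=1$ the variable $r$ is no longer linked to $y$ and the infimum in \eqref{e.def_HS} does not depend on $y$, so the non-attainment assertion should be read for $c<1$; for $c=1$ the critical statement is the tautology $\mu^\plus=\mu$, $B(1)=1$, while the supercritical statement reduces to $\int\log x\,d\mu=-\infty$ (the atom $\mu(\{0\})>0$ makes $\int\log^- x\,d\mu=+\infty$). The case $\mu=\delta_0$ lies inside the supercritical regime and is consistent with $g(r)=r$. Beyond making these edge cases explicit and justifying the passage to the limit $r\downarrow 0$ under the integral sign, there is no substantive difficulty.
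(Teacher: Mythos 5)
Your proposal is correct and follows essentially the same route as the paper: it works from formula \eqref{e.oldstyle}, peels off the atom at $0$ exactly as in the derivation of \eqref{e.almost_induce} inside the proof of \cref{p.induce}, handles the critical case via $p=c$ (or the equivalent direct monotone-convergence computation) and the supercritical case by letting $r \to 0^+$. Your explicit monotonicity argument for the non-attainment claim (and the honest flagging of the degenerate $c=1$ case, where the infimand in \eqref{e.def_HS} is constant in $y$) is a welcome addition, since the paper leaves that part of the statement implicit.
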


\begin{proof}
In the critical case, we use \eqref{e.induce} with $p=c$ and conclude.

In the supercritical case, we can assume that $\mu \neq \delta_0$.
Note that $p<c$, thus $\lim_{r \to 0^+} r^{1-\frac{p}{c}}=0$. 
Moreover, since $\log^+ x \in L^1(\mu)$, we have
\begin{equation}
\lim_{r \to 0^+} \int_{\{x>0\}} \log(x+r) \, d\mu 
= \int_{\{x>0\}} \log x \, d\mu 
< +\infty \, .
\end{equation}
Therefore, using \eqref{e.almost_induce}, we obtain $[\mu]_c = 0$. 
\end{proof}

\cref{p.subcritical,p.critical_supercritical}  allow us to compute HS barycenters in all cases. For emphasis, let us list explicitly the situations where the barycenter vanishes: 


\begin{proposition}\label{p.zeros}
Let $c \in [0,1]$ and $\mu \in \PHS$. 
Then $[\mu]_c = 0$ if and only if one of the following mutually exclusive situations occur:
\begin{enumerate}
\item $c = 0$ and $\mu = \delta_0$.
\item $c>0$, $\mu(0) = 1-c$, and 
$\int \log x \, d\mu^\plus(x) = -\infty$.
\item $c>0$ and $\mu(0) > 1-c$.
\end{enumerate}
\end{proposition}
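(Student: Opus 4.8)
The plan is to read off the characterization directly from the computation of $[\mu]_c$ in the three regimes (subcritical, critical, supercritical) that was carried out in \cref{p.subcritical,p.critical_supercritical}, together with the base case $c=0$ from \cref{p.HS0}, the case $c=1$ from formula \eqref{e.HS1}, and the trivial degenerate case $\mu=\delta_0$ handled by reflexivity (\cref{p.basic}\eqref{i.basic_reflex}). Since the three listed situations are visibly mutually exclusive (they are distinguished by the sign of $c$ and, when $c>0$, by the comparison of $\mu(0)$ with $1-c$), it suffices to check the equivalence in each regime separately.

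First I would dispatch the easy cases. If $c=0$, then $[\mu]_0=\int x\,d\mu$ by \cref{p.HS0}, and this vanishes exactly when $\mu=\delta_0$; this is situation (1) and it excludes all others since they require $c>0$. If $\mu=\delta_0$ but $c>0$, then $[\delta_0]_c=0$ by reflexivity, and indeed $\mu(0)=1>1-c$, so this falls under situation (3). So from now on assume $c>0$ and $\mu\neq\delta_0$, so that $p\coloneqq 1-\mu(0)\in(0,1]$ and $\mu^\plus$ is defined. In the supercritical case $\mu(0)>1-c$ we have $[\mu]_c=0$ by \cref{p.critical_supercritical}\eqref{i.supercritical}, which is exactly situation (3), so both directions hold there.

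It remains to treat $c>0$ with $\mu(0)\le 1-c$, i.e.\ the subcritical and critical cases, and to show $[\mu]_c=0$ happens precisely in situation (2). In the subcritical case $\mu(0)<1-c$, \cref{p.subcritical} gives $[\mu]_c=\exp\int K(x,\eta,c)\,d\mu$ with $\eta\in(0,\infty)$; since $K(\mathord{\cdot},\eta,c)$ is bounded below by $K(0,\eta,c)>-\infty$ and $\mu\in\PHS$ makes $\log^+x$ integrable (hence $K(x,\eta,c)$ has integrable positive part by the estimate in the proof of \cref{p.HS_infinity}), the integral is a finite real number, so $[\mu]_c>0$; and situation (2) is excluded because it requires $\mu(0)=1-c$. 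In the critical case $\mu(0)=1-c$, \cref{p.critical_supercritical}\eqref{i.critical} gives $[\mu]_c=B(c)[\mu^\plus]_1$, and $B(c)>0$ for $c\in(0,1]$ (clear from \eqref{e.def_B}), so $[\mu]_c=0$ iff $[\mu^\plus]_1=0$; by \eqref{e.HS1}, $[\mu^\plus]_1=\exp\int\log x\,d\mu^\plus$, which vanishes iff $\int\log x\,d\mu^\plus=-\infty$. Note the integral is well-defined in $[-\infty,+\infty)$ here because $\mu^\plus\in\PHS$ forces $\log^+x\in L^1(\mu^\plus)$. This is exactly situation (2), and it excludes (1) and (3). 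Finally I would remark that $c=1$ is the subcase $\mu(0)=0=1-c$ of the critical analysis (or directly: $[\mu]_1=\exp\int\log x\,d\mu$ vanishes iff $\int\log x\,d\mu=-\infty$, and here $\mu^\plus=\mu$), so nothing extra is needed.

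There is essentially no obstacle: every regime has already been computed in closed form earlier in the section, so the only things to verify are (a) that the defining integrals are genuinely well-defined in the extended reals (handled by the $\PHS$ hypothesis and the boundedness estimates from \cref{p.HS_infinity}), (b) that $B(c)>0$ on $(0,1]$, and (c) bookkeeping that the three cases are mutually exclusive and exhaustive. The mildest care needed is making sure that in situation (2) the quantity $\int\log x\,d\mu^\plus$ is meaningful; this is where one invokes that $\mu^\plus\in\PHS$ so the positive part of $\log x$ is integrable and the integral lives in $[-\infty,+\infty)$.
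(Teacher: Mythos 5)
Your proof is correct and takes essentially the same route as the paper: a case analysis through the regimes, using \cref{p.HS0} for $c=0$, \cref{p.critical_supercritical} for the critical and supercritical cases (with the critical case reduced via $[\mu^\plus]_1$ and \eqref{e.HS1}), and \cref{p.subcritical} in the subcritical case. The only difference is that you make explicit the positivity of $[\mu]_c$ in the subcritical case (via the finite lower bound $K(x,\eta,c)\ge K(0,\eta,c)>-\infty$), a point the paper leaves implicit.
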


\begin{proof}
The case $c=0$ being obvious, assume that $c>0$, and so $B(c)>0$.
In the critical case, part \eqref{i.critical} of \cref{p.critical_supercritical} tells us that $[\mu]_c = 0$ if and only if $[\mu^\plus]_1 = 0$, which by \eqref{e.HS1} is equivalent to $\int \log x \, d\mu^\plus = -\infty$. 
In the supercritical case, part \eqref{i.supercritical} of the \lcnamecref{p.critical_supercritical}  ensures that $[\mu]_c = 0$.
\end{proof}

\begin{example}\label{ex.Bernoulli}
Consider the family of probability measures:
\begin{equation}
\mu_p \coloneqq (1-p) \delta_0 + p \delta_1 \, ,\quad 0\le p \le 1.
\end{equation}
If $0<c<1$, then  
\begin{equation}\label{e.one_param}
[\mu_p]_c = 
\begin{cases} 
	0 &\quad \text{if } p<c \\ 
	B(c) = c (1-c)^{\frac{1-c}{c}} &\quad \text{if } p=c \\ 
	\frac{B(c)}{B(c/p)} = (1-c)^{\frac{1-c}{c}} p^{\frac{p}{c}} (p-c)^{\frac{c-p}{c}}  &\quad \text{if } p>c \, .  
\end{cases}
\end{equation}
These formulas were first obtained by Sz\'ekely \cite{Szekely_first}.
It follows that the function 
\begin{equation}
(p,c) \in [0,1] \times [0,1] \mapsto [\mu_p]_c
\end{equation}
(whose graph is shown on \cite[p.~680]{vanEs})
is discontinuous at the points with $p=c>0$, and only at those points.
We will return to the issue of continuity in \cref{s.continuity}.
\end{example}


The practical computation of HS barycenters usually requires numerical methods.
In any case, it is useful to notice that the function $\eta$ from \cref{p.subcritical} satisfies the internality property:

\begin{lemma}\label{l.internality_eta}
Let $\mu \in \PHS$, $c \in (0, 1)$, and suppose that $\mu(0) < 1-c$.
If $\mu(I)=1$ for an interval $I \subseteq \R_\plus$, then $\eta(\mu,c) \in I$.
\end{lemma}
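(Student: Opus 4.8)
The plan is to exploit the variational characterization from \cref{p.subcritical}: the minimizer $\eta = \eta(\mu,c)$ is the unique zero of the strictly increasing function $\psi(y) = \int \Delta(x,y)\,d\mu(x)$, where $\Delta(x,y) = 1 - x/(cx+(1-c)y)$. Write $I = [\alpha,\beta]$ with $0 \le \alpha \le \beta \le +\infty$ (the endpoints need not belong to $I$, and $\beta$ may be $+\infty$; the degenerate case $\alpha = \beta$ is $\mu = \delta_\alpha$, excluded since $\mu(0) < 1-c < 1$ forces $\mu$ to be non-degenerate — or handled trivially). Since $\psi$ is strictly increasing and vanishes at $\eta$, it suffices to show $\psi(\alpha) \le 0$ (forcing $\eta \ge \alpha$) and $\psi(\beta) \ge 0$ (forcing $\eta \le \beta$), with the appropriate one-sided limits when an endpoint is $0$ or $+\infty$.

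First I would handle the upper bound. For $y \ge \beta$ and $x \in [\alpha,\beta]$, since $x \le y$ we have $cx + (1-c)y \ge cx + (1-c)x = x$, hence $\Delta(x,y) = 1 - x/(cx+(1-c)y) \ge 0$; integrating gives $\psi(\beta) \ge 0$ (or, if $\beta = +\infty$, we already know $\lim_{y\to+\infty}\psi(y) = 1 > 0$ from the proof of \cref{p.subcritical}, and there is nothing to prove since $I = [\alpha,+\infty)$ contains everything). Therefore $\eta \le \beta$. Next the lower bound: for $0 < y \le \alpha$ and $x \in [\alpha,\beta]$ with $x > 0$, we have $x \ge y$, so $cx + (1-c)y \le cx + (1-c)x = x$, giving $\Delta(x,y) \le 0$. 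If $\alpha > 0$ this applies to $\mu$-a.e.\ $x$ (the mass at $0$ is zero since $0 \notin I$... actually $0$ could equal $\alpha$), so $\psi(\alpha) \le 0$ and $\eta \ge \alpha$. If $\alpha = 0$ there is nothing to prove for the lower bound. The only subtlety is when $\alpha = 0 < \mu(\{0\})$ but $0 \notin I$: then $I = (0,\beta]$ forces $\mu(\{0\}) = 0$, contradiction, so in fact $0 \in I$ whenever $\mu(\{0\}) > 0$, and again the lower bound is automatic.

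The one genuinely delicate point is that $\psi$ need not be continuous up to $y = 0$ or be finite at $y = \beta$ when the relevant endpoint is an open end of $I$ not attained, or when $\beta = +\infty$; I would phrase the argument using the sign of $\Delta(x,y)$ for $y$ in the open interval $(\alpha,\beta)$ rather than evaluating at the endpoints, concluding $\eta \in (\alpha,\beta) \subseteq I$ directly from the sign of $\psi$ there, and then separately noting that the closed or half-open variants follow because $\eta$ is finite and positive. I expect this endpoint bookkeeping — matching the various cases $\alpha = 0$ vs.\ $\alpha > 0$, $\beta < \infty$ vs.\ $\beta = \infty$, and open vs.\ closed endpoints — to be the main (entirely routine) obstacle; the core inequality $x \le cx + (1-c)y \le \dots$ comparing $x$ with the convex combination is the whole idea.
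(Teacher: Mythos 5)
The paper leaves this lemma to the reader, and your argument is the intended one: use the strict monotonicity of $\psi(y)=\int\Delta(x,y)\,d\mu$ from the proof of \cref{p.subcritical} together with the sign of $\Delta(x,y)$, which is $\ge 0$ when $x\le y$ and $\le 0$ when $x\ge y>0$; this pins $\eta$ between $\inf I$ and $\sup I$, and the strictness needed at an endpoint not belonging to $I$ (e.g.\ if $\alpha\coloneqq\inf I\notin I$ and $\alpha>0$, then $\Delta(x,\alpha)<0$ for $\mu$-a.e.\ $x$, so $\psi(\alpha)<0$ and $\eta>\alpha$) is indeed routine. Two small corrections: your worry about $\psi$ blowing up at an endpoint is vacuous, since $1-c^{-1}\le\Delta\le 1$ makes $\psi$ finite at every $y>0$; and $\mu(0)<1-c$ does \emph{not} force $\mu$ to be non-degenerate (it only excludes $\delta_0$), but as you note the case $\mu=\delta_{x_0}$ with $x_0>0$ is trivial because $\eta(\delta_{x_0},c)=x_0\in I$.
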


The proof is left to the reader.




\section{Comparison with the symmetric means}\label{s.inequalities}

\subsection{HS means as repetitive symmetric means}\label{ss.repetitive}

The HS barycenter of a probability measure, introduced in the previous section, may now be specialized to the case of discrete equidistributed probabilities.
So the \emph{HS mean} of a tuple $\underline{x} = (x_1,\dots,x_n)$ of nonnegative numbers with parameter $c \in [0,1]$ is defined as:
\begin{equation}\label{e.def_HS_mean}
\hsm_c(\underline{x}) \coloneqq 
\left[ \frac{\delta_{x_1} + \dots + \delta_{x_n}}{n}\right]_c \, .
\end{equation}
Using \eqref{e.oldstyle}, we have more explicitly:
\begin{equation}\label{e.hsm_formula}
\hsm_c(\underline{x}) =
B(c)
\inf_{r>0} r^{-\frac{1-c}{c}} \prod_{j=1}^n (x_j + r)^\frac{1}{cn} 
\quad \text{(if $c>0$)},
\end{equation}
where $B$ is the function \eqref{e.def_B}.
On the other hand, recall 
that for $k \in \{1,\dots,n\}$, the \emph{$k$-th symmetric mean} of the $n$-tuple $\underline{x}$ is:
\begin{equation}\label{e.def_sym_mean_again}
\sym_k(\underline{x}) \coloneqq \left(\frac{E^{(n)}_k(\underline{x})}{\binom{n}{k}}\right)^{\frac{1}{k}} \, ,
\end{equation}
where $E^{(n)}_k$ denotes the elementary symmetric polynomial of degree $k$ in $n$ variables.

Since they originate from a barycenter, the HS means are \emph{repetition invariant}\footnote{or \emph{intrinsic}, in the terminology of \cite[Def.~3.3]{KLL}} in the sense that, for any $m>0$, 
\begin{equation}\label{e.duh}
\hsm_c \left( \underline{x}^{(m)} \right)  = \hsm_c ( \underline{x} ) \, , 
\end{equation}
where $\underline{x}^{(m)}$ denotes the $nm$-tuple obtained by concatenation of $m$ copies of the $n$-tuple $\underline{x}$.
No such property holds for the symmetric means, even allowing for adjustment of the parameter $k$.
Nevertheless, if the number of repetitions tends to infinity, then the symmetric means tend to stabilize, and the limit is a HS mean; more precisely:

\begin{theorem}\label{t.repeat}
If $\underline{x} = (x_1,\dots,x_n)$, 
$x_i \ge 0$, and $1 \le k \le n$, then:
\begin{equation}\label{e.repeat}
\lim_{m \to \infty} 
\sym_{km} \left( \underline{x}^{(m)} \right) 
= \hsm_{k/n} (\underline{x}) \, .
\end{equation}
Furthermore, the relative error goes to zero uniformly with respect to the $x_i$'s.
\end{theorem}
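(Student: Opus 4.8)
The key is to express $\sym_{km}(\underline{x}^{(m)})$ via the generating polynomial and extract its asymptotics as $m\to\infty$. Recall that for any $N$-tuple $\underline{z}$, $\sum_{j=0}^N E^{(N)}_j(\underline{z}) t^{N-j} = \prod_i (z_i + t)$, so the relevant symmetric polynomial is a coefficient in a product. For $\underline{x}^{(m)}$, the generating polynomial is $\big(\prod_{j=1}^n (x_j+t)\big)^m$, a polynomial of degree $nm$, and $E^{(nm)}_{km}(\underline{x}^{(m)})$ is the coefficient of $t^{nm-km} = t^{(n-k)m}$ in it. Thus
\begin{equation}\label{e.coeff}
\sym_{km}\big(\underline{x}^{(m)}\big) = \left( \frac{[t^{(n-k)m}] \, P(t)^m}{\binom{nm}{km}} \right)^{\frac{1}{km}}, \qquad P(t) \coloneqq \prod_{j=1}^n (x_j+t).
\end{equation}
So the plan is: (i) estimate the coefficient $[t^{(n-k)m}]P(t)^m$ by a saddle-point / Cauchy-integral argument, writing it as $\frac{1}{2\pi i}\oint P(t)^m t^{-(n-k)m-1}\,dt$ and optimizing the radius; (ii) estimate the binomial $\binom{nm}{km}$ by Stirling; (iii) take $\frac{1}{km}$-th powers and let $m\to\infty$, checking the result matches \eqref{e.hsm_formula} with $c = k/n$.

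For step (i), the natural contour is the circle $|t| = r$ with $r>0$ chosen to balance growth; by the standard coefficient bound, for every $r>0$,
\begin{equation}
[t^{(n-k)m}] P(t)^m \le \frac{P(r)^m}{r^{(n-k)m}},
\end{equation}
which already gives the upper inequality $\sym_{km}(\underline{x}^{(m)}) \le B(k/n)\,\inf_{r>0} r^{-\frac{n-k}{k}} P(r)^{1/k}\cdot(1+o(1)) = \hsm_{k/n}(\underline{x})(1+o(1))$ after feeding in Stirling's estimate $\binom{nm}{km}^{1/(km)} \to \big(\frac{n^n}{k^k(n-k)^{n-k}}\big)^{1/k} = B(k/n)^{-1}$ (one should verify this identity with $c=k/n$, $1-c = (n-k)/n$, using $B(c) = c(1-c)^{(1-c)/c}$). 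For the matching lower bound one needs that the coefficient is not much smaller than its largest-radius term: since $P$ has nonnegative coefficients, a crude bound $[t^{(n-k)m}]P(t)^m \ge \frac{1}{nm+1}\max_{0\le j\le nm}[t^j]P(t)^m$ combined with $\max_j [t^j]P(t)^m \cdot r^j$-type reasoning, or more cleanly the fact that a polynomial of degree $nm$ with nonnegative coefficients satisfies $P(r)^m \le (nm+1)\max_j ([t^j]P(t)^m\, r^j)$, pins down the exponential rate. Since we only take a $\frac{1}{km}$-th power, the polynomial factor $(nm+1)^{\pm 1/(km)} \to 1$, so both bounds converge to the same $\inf_{r>0}$ expression. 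Here one should note $x_j + r \ge r > 0$ so $P(r) > 0$ and all logarithms are finite; the case $c = k/n \in (0,1)$ is the only one in play since $1 \le k \le n$, and if $k = n$ the statement is the trivial $\gmean = \hsm_1$.

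For the "furthermore" (uniformity in the $x_i$), the cleanest route is homogeneity: by \cref{p.basic}\eqref{i.basic_homog} and the corresponding homogeneity of $\sym_{km}$, both sides of \eqref{e.repeat} scale linearly under $\underline{x} \mapsto \lambda \underline{x}$, so the relative error is scale-invariant and we may normalize, say, $\max_i x_i = 1$ (the case $\underline{x} = 0$ being trivial). Then the quantities $r \mapsto r^{-\frac{n-k}{k}} P(r)^{1/k}$ are uniformly controlled on the relevant range of $r$, and the $o(1)$ errors above — which come only from Stirling's formula for $\binom{nm}{km}$ and from the polynomial-in-$m$ prefactors $(nm+1)^{\pm 1/(km)}$ — depend solely on $n$, $k$, $m$ and not on the $x_i$. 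I would make this explicit by producing two-sided inequalities
\begin{equation}
\frac{c_{n,k,m}}{?} \;\le\; \frac{\sym_{km}(\underline{x}^{(m)})}{\hsm_{k/n}(\underline{x})} \;\le\; C_{n,k,m},
\end{equation}
with $c_{n,k,m}, C_{n,k,m} \to 1$, valid for all $\underline{x} \neq 0$.

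\textbf{Main obstacle.} The essential difficulty is the lower bound on the coefficient $[t^{(n-k)m}] P(t)^m$: a single term of $P(t)^m$ could in principle dominate and leave the target coefficient exponentially small. The fix is that $P$ has nonnegative coefficients and fixed degree $n$, so $P(r)^m = \sum_{j=0}^{nm} ([t^j]P(t)^m) r^j$ has only $nm+1$ terms; hence its value is at most $nm+1$ times its largest term, and choosing $r$ to be the minimizer $\rho$ of $r^{-(n-k)/k}P(r)^{1/k}$ (equivalently the solution $\rho(\mu,k/n)$ of \eqref{e.rho} with $\mu = \frac1n\sum\delta_{x_j}$, which by \cref{p.subcritical}/\cref{l.internality_eta} lies in a compact subset of $(0,\infty)$ after normalization — here the subcritical hypothesis $\mu(0) < 1-c$ is automatic since $\mu(0) \le (n-1)/n < (n-k)/n$ would fail only if... actually one checks $\mu(0) = \#\{j: x_j=0\}/n$, and if this is $\ge 1-k/n$ then $[\mu]_{k/n}$ may vanish and one argues separately via \cref{p.critical_supercritical}) makes the largest term match $P(\rho)^m \rho^{-(n-k)m}$ up to the harmless factor $nm+1$. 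The separate treatment of the critical/supercritical $\mu(0)$ case — where $\hsm_{k/n}(\underline{x}) = 0$ and one must show $\sym_{km}(\underline{x}^{(m)}) \to 0$ with the right rate — is a second point requiring care: there $E^{(nm)}_{km}(\underline{x}^{(m)})$ involves products of $km$ factors chosen from $nm$ numbers of which $\ge (1-k/n)nm$ are zero, forcing many nonzero factors and hence exponential decay, which again one reads off from \eqref{e.coeff} with $\rho \to 0$.
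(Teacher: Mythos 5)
Your upper bound is sound and is essentially the paper's: the coefficient estimate $[t^{(n-k)m}]P(t)^m \le r^{-(n-k)m}P(r)^m$ is exactly the Cauchy-integral bound used in \cref{t.main_upper}, and combined with the Stirling estimate of \cref{l.asymp} it gives $\sym_{km}(\underline{x}^{(m)}) \le (9km)^{1/(2km)}\,\hsm_{k/n}(\underline{x})$ with an $\underline{x}$-independent factor. The gap is in the lower bound, which is the heart of the theorem. Your ``crude bound'' $[t^{(n-k)m}]P(t)^m \ge \frac{1}{nm+1}\max_j [t^j]P(t)^m$ is false for a general polynomial with nonnegative coefficients (a single specific coefficient can be far smaller than the largest one, or even zero), and the refined version you actually rely on --- that at the minimizing radius $\rho$ the largest term of $\sum_j a_j\rho^j$ is, up to the factor $nm+1$, the term $j=(n-k)m$ --- is precisely the statement that needs proof. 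Choosing $\rho$ as the minimizer of $r^{-(n-k)m}P(r)^m$ only says that the \emph{mean} of the index $j$ under the weights $a_j\rho^j$ equals $(n-k)m$; it does not by itself locate the \emph{mode} there, nor give a pointwise lower bound $a_{(n-k)m}\rho^{(n-k)m} \ge \mathrm{poly}(m)^{-1}P(\rho)^m$, which is what you need after taking $(km)$-th roots. For weights merely nonnegative with mean pinned at $(n-k)m$, the coefficient at that index can vanish, so some structural input about $P(t)^m$ is indispensable.

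The missing ingredient can be supplied, but it is a real one: $P(t)^m$ is real-rooted, so its coefficients are log-concave (Newton's inequalities) with contiguous support in the subcritical case, and the normalized weights $a_j\rho^j/P(\rho)^m$ form a Poisson binomial distribution whose mode lies within $1$ of its mean (Darroch's rule); combining this with a ratio bound coming from log-concavity and the mean condition yields $a_{(n-k)m}\rho^{(n-k)m} \ge C(nm+1)^{-1}P(\rho)^m$, which suffices since the loss is polynomial in $m$. None of this is in your sketch. Note that the paper avoids this local-limit-type issue altogether: it first proves the uniform two-sided inequality of \cref{t.main}, whose lower half (\cref{t.main_lower}) rests on the matrix inequality $\smean(A)\le\pmean(A)$ of \eqref{e.our_vdW}, i.e.\ ultimately on the Egorichev--Falikman (van der Waerden) permanent bound, and then \cref{t.repeat} with uniform relative error is an immediate corollary via repetition invariance and \cref{l.asymp}. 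So either import that inequality as the paper does, or prove the log-concavity/mean-versus-mode lemma; as written, the lower bound (and hence the uniformity claim, which in your argument depends on it) is unjustified. Your treatment of the degenerate cases also needs a small correction: in the supercritical case the symmetric means of $\underline{x}^{(m)}$ are exactly $0$, not merely exponentially small, and in the critical case the coefficient is computed exactly, with no saddle-point analysis needed.
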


This \lcnamecref{t.repeat} will be proved in the next subsection. 

It is worthwhile to note that the Navas barycenter \cite{Navas} 
is obtained as a ``repetition limit'' similar to \eqref{e.repeat}.

\begin{example}\label{ex.Cellarosi}
Using \cref{p.subcritical,p.critical_supercritical}, one computes:
\begin{equation}
\hsm_{1/2} (x,y) = \left(\frac{\sqrt{x}+\sqrt{y}}{2}\right)^2 \, ;
\end{equation}
Therefore:
\begin{equation}
\lim_{m \to \infty} \sym_{m} \big( \underbrace{x, \dots, x}_{\text{$m$ times}}, \,  \underbrace{y, \dots, y}_{\text{$m$ times}} \big) =  \left(\frac{\sqrt{x}+\sqrt{y}}{2}\right)^2 \, .
\end{equation}
The last equality was deduced in \cite[p.~31]{Cellarosi} from the asymptotics of Legendre polynomials. 
\end{example}

Let us pose a problem:

\begin{question}
Is the sequence $m \mapsto \sym_{km} \left( \underline{x}^{(m)} \right)$ always monotone decreasing?
\end{question}

There exists a partial result: when $k=1$ and $n=2$, \cite[Lemma~4.1]{Cellarosi} establishes \emph{eventual} monotonicity.


\subsection{Inequalities between symmetric means and HS means}\label{ss.comparison}

The following is the first main result of this paper. 

\begin{theorem}\label{t.main}
If $\underline{x} = (x_1,\dots,x_n)$, 
$x_i \ge 0$, and $1 \le k \le n$, then
\begin{equation}\label{e.main}
\hsm_{k/n}(\underline{x}) \le 
\sym_{k}(\underline{x}) \le 
\frac{\binom{n}{k}^{-1/k}}{B \left( \frac{k}{n} \right)}  \, \hsm_{k/n}(\underline{x}) \, .
\end{equation}
\end{theorem}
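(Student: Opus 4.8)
The plan is to work directly with the explicit formula \eqref{e.hsm_formula} for the HS mean and the elementary expansion of $\prod_{j=1}^n(x_j+r)$, and then to read off the optimal parameter $r$ probabilistically. Write $c=k/n$, so $cn=k$ and $\frac{1-c}{c}=\frac{n-k}{k}$; formula \eqref{e.hsm_formula} becomes
\[
\hsm_{k/n}(\underline x)=B(k/n)\,\inf_{r>0}\Big(r^{\,k-n}\textstyle\prod_{j=1}^n(x_j+r)\Big)^{1/k}\,,
\]
while $\prod_{j=1}^n(x_j+r)=\sum_{i=0}^n E^{(n)}_i(\underline x)\,r^{\,n-i}$. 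Discarding all terms but $i=k$ gives $r^{\,k-n}\prod_j(x_j+r)\ge E^{(n)}_k(\underline x)$ for every $r>0$, hence $\hsm_{k/n}(\underline x)\ge B(k/n)\,E^{(n)}_k(\underline x)^{1/k}=B(k/n)\binom{n}{k}^{1/k}\sym_k(\underline x)$, which is exactly the second inequality in \eqref{e.main}. So the real content is the first inequality, $\hsm_{k/n}(\underline x)\le\sym_k(\underline x)$.

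For that, I would first dispose of the degenerate cases. Set $\mu\coloneqq(\delta_{x_1}+\dots+\delta_{x_n})/n$. If $\mu$ is supercritical for $c=k/n$ (more than $n-k$ of the $x_j$ vanish) then $\hsm_{k/n}(\underline x)=0$ by \cref{p.critical_supercritical}; the critical case follows from the same proposition once one knows $\binom{n}{k}\le n^n/(k^k(n-k)^{n-k})$, i.e.\ $B(k/n)\binom{n}{k}^{1/k}\le1$ (it also drops out of the argument below as a limit). So assume $\mu$ is subcritical and let $\rho=\rho(\mu,k/n)>0$ be the minimizer of \cref{p.oldstyle}, characterized by $\sum_{j=1}^n\frac{x_j}{x_j+\rho}=k$. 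Put $p_j\coloneqq\frac{x_j}{x_j+\rho}\in[0,1)$ and $q_j\coloneqq1-p_j$, and let $Y\coloneqq Z_1+\dots+Z_n$ with the $Z_j$ independent and $\Pr[Z_j=1]=p_j$ (a Poisson binomial law); then $\mathbb{E}Y=\sum_j p_j=k$. The identity
\[
\Pr[Y=i]=\sum_{|S|=i}\prod_{j\in S}p_j\prod_{j\notin S}q_j=\frac{E^{(n)}_i(\underline x)\,\rho^{\,n-i}}{\prod_{j=1}^n(x_j+\rho)}\qquad(0\le i\le n)
\]
shows that the minimum above equals $r^{\,k-n}\prod_j(x_j+r)\big|_{r=\rho}=E^{(n)}_k(\underline x)/\Pr[Y=k]$. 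Substituting and dividing by $\sym_k(\underline x)^k=E^{(n)}_k(\underline x)/\binom{n}{k}$ yields
\[
\Big(\frac{\hsm_{k/n}(\underline x)}{\sym_k(\underline x)}\Big)^{k}=\frac{\binom{n}{k}\,B(k/n)^{k}}{\Pr[Y=k]}\,.
\]

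Since $\binom{n}{k}\,B(k/n)^{k}=\binom{n}{k}(k/n)^k(1-k/n)^{n-k}=\Pr\!\big[\mathrm{Bin}(n,k/n)=k\big]$, the inequality $\hsm_{k/n}(\underline x)\le\sym_k(\underline x)$ becomes \emph{equivalent} to the purely probabilistic statement
\[
\Pr[Y=k]\ \ge\ \Pr\!\big[\mathrm{Bin}(n,k/n)=k\big]
\]
for every Poisson binomial $Y$ on $\{0,\dots,n\}$ with integer mean $k$: among Poisson binomials with a prescribed integer mean, the honest binomial minimizes the mass at the mean. (Incidentally this also re-derives the second inequality of \eqref{e.main}, as $\Pr[Y=k]\le1$.)

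This last lemma is the main obstacle. I would prove it by compactness plus a variational dichotomy: $\Pr[Y=k]$ attains its minimum over the compact set $\{p\in[0,1]^n:\sum_j p_j=k\}$, and if $p$ is a minimizer then, freezing all coordinates except $p_i,p_j$ and their sum, $\Pr[Y=k]$ is an affine function of the product $p_ip_j$ whose slope is the second difference $\Pr[W=k-2]-2\Pr[W=k-1]+\Pr[W=k]$ of the law of $W\coloneqq Y-Z_i-Z_j$ at $k-1$. Optimizing this affine function forces, for each pair $i,j$, either $p_i=p_j$ or one of $p_i,p_j$ to lie in $\{0,1\}$; hence a minimizer has all its non-extreme coordinates equal, so $Y$ is a shift of a genuine binomial $\mathrm{Bin}(n',k'/n')$ with $n'\le n$, $k'\le k$, $n'-k'\le n-k$, and $\Pr[Y=k]=\Pr[\mathrm{Bin}(n',k'/n')=k']$. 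One then concludes by induction on $n$, the inductive step reducing to the two elementary inequalities $\Pr[\mathrm{Bin}(n-1,\tfrac k{n-1})=k]\ge\Pr[\mathrm{Bin}(n,\tfrac kn)=k]$ and $\Pr[\mathrm{Bin}(n-1,\tfrac{k-1}{n-1})=k-1]\ge\Pr[\mathrm{Bin}(n,\tfrac kn)=k]$ (peak values of binomials with fixed mean decrease in $n$); alternatively the extremal property of Poisson binomials can be quoted from Hoeffding-type comparison theorems. I expect that making the variational dichotomy and the boundary induction watertight — in particular handling the cases where the affine slope vanishes — is where the actual effort lies.
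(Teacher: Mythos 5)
Your proposal is correct, and it takes a genuinely different route from the paper's. For the right-hand inequality in \eqref{e.main} you discard the nonnegative terms of $\prod_j(x_j+r)=\sum_i E^{(n)}_i(\underline{x})\,r^{n-i}$ to get $E^{(n)}_k(\underline{x})\le r^{k-n}\prod_j(x_j+r)$; the paper obtains the same bound in \cref{t.main_upper} via Cauchy's formula, the complex-analytic detour being needed only for the equality analysis, which \cref{t.main} does not require --- so your version is, if anything, more elementary. The genuine divergence is in the left-hand inequality: the paper encodes $\underline{x}$ into the matrix \eqref{e.special} and invokes the scaling-mean/permanental-mean inequality \eqref{e.our_vdW} of \cite{BIP}, which rests on the Egorychev--Falikman theorem \eqref{e.vdW}; you instead use the minimizer $\rho$ of \cref{p.oldstyle} and the identity $\Pr[Y=i]=E^{(n)}_i(\underline{x})\,\rho^{n-i}/\prod_j(x_j+\rho)$ to restate the inequality as the extremal property that, among Poisson binomial laws on $\{0,\dots,n\}$ with integer mean $k$, the binomial $\mathrm{Bin}(n,k/n)$ minimizes the mass at $k$. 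Your reduction is airtight, including the supercritical and critical cases (the latter via \cref{p.critical_supercritical} and $\binom{n}{k}B(k/n)^k\le 1$), and the probabilistic lemma is true: it is exactly Hoeffding's 1956 theorem on the number of successes in independent trials (minimality of $\Pr[b\le Y\le c]$ in the equal-probability case when the mean lies in $[b,c]$), specialized to the one-point window $\{k\}$, so quoting it is legitimate; alternatively, your pairwise variational argument is essentially the standard proof, and the two binomial monotonicity inequalities needed for the boundary induction do hold --- both reduce, after a short computation with the ratio of the two point probabilities, to the monotonicity of $(1+1/j)^j$. What each route buys: the paper's gives the equality characterization of \cref{t.main_lower} and places the result in the permanental framework behind its Muirhead-type generalizations, at the price of a deep input; yours is self-contained modulo a classical and much softer lemma, and makes transparent the identity $\binom{n}{k}B(k/n)^k=\Pr[\mathrm{Bin}(n,k/n)=k]$ that explains the constant. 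The only remaining work is the point you flag yourself: handling vanishing slopes in the variational dichotomy (e.g.\ by selecting a minimizer with the maximal number of coordinates in $\{0,1\}$), or else simply citing Hoeffding.
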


Let us postpone the proof to the next subsection. 
The factor at the RHS of \eqref{e.main} is asymptotically $1$ with respect to $k$; indeed: 


\begin{lemma}\label{l.asymp}
For all integers $n \ge k \ge 1$, we have 
\begin{equation}\label{e.asymptotics}
1 \le \frac{\binom{n}{k}^{-1/k}}{B \left( \frac{k}{n} \right)} < (9k)^\frac{1}{2k}  \, ,
\end{equation}
with equality if and only if $k=n$.
\end{lemma}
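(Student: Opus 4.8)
The plan is to reduce the double inequality to a single estimate on the quantity
\[
R \coloneqq R(n,k) \coloneqq \frac{n^n}{\binom{n}{k}\,k^k\,(n-k)^{n-k}} \, .
\]
First I would dispose of the boundary case $k=n$: there $\binom{n}{n}=1$ and $B(1)=1$, so the middle term of \eqref{e.asymptotics} equals $1$, the left inequality is an equality, and the right one holds since $9k>1$. For $1\le k\le n-1$ we have $c\coloneqq k/n\in(0,1)$, so $B(k/n)=\frac{k}{n}\big(\frac{n-k}{n}\big)^{(n-k)/k}$, and raising $\binom{n}{k}^{-1/k}/B(k/n)$ to the $k$-th power gives
\[
\left(\frac{\binom{n}{k}^{-1/k}}{B(k/n)}\right)^{k}
= \binom{n}{k}^{-1}\,\frac{n^k}{k^k}\cdot\frac{n^{n-k}}{(n-k)^{n-k}}
= R \, .
\]
Thus the lemma is equivalent to $1\le R$ (strict whenever $k<n$) together with $R^{2}<9k$.

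For the lower bound I would invoke the binomial theorem with $p=k/n$: since all terms of $1=(p+(1-p))^n$ are nonnegative,
\[
1 \;\ge\; \binom{n}{k}p^k(1-p)^{n-k} \;=\; \binom{n}{k}\,\frac{k^k(n-k)^{n-k}}{n^n}\;=\;\frac1R \, ,
\]
and when $1\le k\le n-1$ the $j=0$ term $\big((n-k)/n\big)^n$ is strictly positive, so $1/R<1$, i.e.\ $R>1$. Together with the $k=n$ computation this yields exactly the stated equality condition.

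For the upper bound I would use the standard two-sided Stirling (Robbins) estimate $\sqrt{2\pi m}\,(m/e)^m<m!<\sqrt{2\pi m}\,(m/e)^m e^{1/(12m)}$, valid for every integer $m\ge1$, applied to $m\in\{k,\,n-k,\,n\}$ (all $\ge1$ in this range). Bounding $n!$ from below and $k!,(n-k)!$ from above makes the powers of $e$ cancel cleanly, so
\[
\binom{n}{k} \;>\; \frac{1}{\sqrt{2\pi}}\sqrt{\frac{n}{k(n-k)}}\;\frac{n^n}{k^k(n-k)^{n-k}}\;e^{-\frac{1}{12k}-\frac{1}{12(n-k)}} \, ,
\]
whence
\[
R \;<\; \sqrt{2\pi}\,\sqrt{\frac{k(n-k)}{n}}\;e^{\frac{1}{12k}+\frac{1}{12(n-k)}}
\;<\; \sqrt{2\pi k}\;e^{1/6} \, ,
\]
using $n-k<n$ and $k,\,n-k\ge1$. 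Squaring gives $R^{2}<2\pi e^{1/3}k<9k$, the last step being the elementary numerical fact that $e^{1/3}<1.4$ and $2\pi\cdot 1.4<9$. Taking $2k$-th roots of $R^{2}<9k$ delivers $\binom{n}{k}^{-1/k}/B(k/n)<(9k)^{1/(2k)}$.

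The argument is essentially bookkeeping around Stirling's formula; the only points needing care are keeping the directions of the Stirling inequalities straight (a lower bound on $n!$, upper bounds on the two denominator factorials) and the numerical check $2\pi e^{1/3}<9$ — the constant $9$ is comfortable but not so generous that one could replace $e^{1/6}$ by a crude bound like $2$, so a little honest arithmetic is required there.
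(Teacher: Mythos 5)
Your proof is correct and follows essentially the same route as the paper: the binomial theorem gives the first inequality (you work with $R=1/b$ where the paper uses $b=\binom{n}{k}c^k(1-c)^{n-k}$), and the two-sided Stirling/Robbins bounds with the numerical check $2\pi e^{1/3}<9$ give the second. You merely spell out the Stirling bookkeeping and the equality case that the paper leaves as ``a calculation.''
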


\begin{proof} 
Let $c \coloneqq k/n$ and 
\begin{equation}
b \coloneqq \binom{n}{k} \left[B(c)\right]^k = \binom{n}{k} c^k \, (1-c)^{n-k} \, .
\end{equation}
By the Binomial Theorem, $b \le 1$, which yields the first part of \eqref{e.asymptotics}.
For the lower estimate, we use the following Stirling bounds (see \cite[p.~54, (9.15)]{Feller}), valid for all $n\ge 1$,
\begin{equation}
\sqrt{2\pi} \, n^{n+\frac{1}{2}} e^{-n} < n! < \sqrt{2\pi} \, n^{n+\frac{1}{2}} e^{-n+\frac{1}{12}} \, .
\end{equation}
Then, a calculation (cf. \cite[p.~184, (3.11)]{Feller}) gives:
\begin{equation}
b > \frac{e^{-\frac{1}{6}}}{\sqrt{2 \pi c(1-c) n}}  > \frac{1}{\sqrt{9k}} \, ,
\end{equation}
from which the second part of \eqref{e.asymptotics} follows.
\end{proof}

\cref{t.main,l.asymp} imply that HS means (with rational values of the parameter) can be obtained as repetition limits of symmetric means:


\begin{proof}[Proof of \cref{t.repeat}]
Applying \cref{t.main} to the tuple $\underline{x}^{(m)}$, using observation \eqref{e.duh} and \cref{l.asymp}, we have:
\begin{equation}
   \hsm_{k/n}(\underline{x}) \le 
\lim_{m \to \infty}\sym_{km}(\underline{x}^{(m)}) \le 
(9km)^\frac{1}{2km} \, 
\hsm_{k/n}(\underline{x})  \,  .   \qedhere
\end{equation}
%
\end{proof}

\begin{remark}\label{r.Maclaurin}
If $k$ is fixed, then 
\begin{equation}
\lim_{n \to \infty} \frac{\binom{n}{k}^{-1/k}}{B \left( \frac{k}{n} \right)} = \frac{e (k!)^{1/k}}{k} > 1 \, ,
\end{equation}
and therefore the bound from \cref{t.main_upper} may be less satisfactory.
But in this case we may use the alternative bound coming from Maclaurin inequality:
\begin{equation}\label{e.Maclaurin_particular}
\sym_k(\underline{x}) \le \sym_1(\underline{x}) = \hsm_0(\underline{x}) \, . 
\end{equation}
\end{remark}

\subsection{Proof of \cref{t.main}}

The two inequalities in \eqref{e.main} will be proved independently of each other. They are essentially contained in the papers \cite{BIP} and \cite{HS76}, respectively, though neither was stated explicitly.
In the following \cref{t.main_lower,t.main_upper}, we also characterize the cases of equality, and in particular show that each inequality is sharp in the sense that the corresponding factors cannot be improved.

\medskip

Let us begin with the second inequality, which is more elementary.
By symmetry, there is no loss of generality in assuming that the numbers $x_i$ are ordered. 

\begin{theorem}\label{t.main_upper}
If $\underline{x} = (x_1,\dots,x_n)$ with $x_1 \ge \cdots \ge x_n \ge 0$, and $1 \le k \le n$, then:
\begin{equation}\label{e.main_upper}
\sym_k(\underline{x})
\le 
\frac{\binom{n}{k}^{-1/k}}{B \left( \frac{k}{n} \right)}
\hsm_{k/n}(\underline{x}) \, .
\end{equation}
Furthermore, equality holds if and only if $x_{k+1} = 0$ or $k=n$.
\end{theorem}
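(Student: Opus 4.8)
The plan is to reduce \eqref{e.main_upper} to an elementary polynomial inequality. Fix $\underline{x}$ with $x_1 \ge \cdots \ge x_n \ge 0$ and set $c \coloneqq k/n \in (0,1]$. Inserting this value of $c$ into formula \eqref{e.hsm_formula} (so that $\tfrac{1-c}{c} = \tfrac{n-k}{k}$ and $\tfrac{1}{cn} = \tfrac1k$) and comparing with the definition \eqref{e.def_sym_mean_again} of $\sym_k$, the factors $B(k/n)$ cancel (note $B(k/n)>0$ since $k\ge 1$), and after raising both sides to the $k$-th power, \eqref{e.main_upper} is seen to be equivalent to
\[
E^{(n)}_k(\underline{x}) \;\le\; r^{k-n} \prod_{j=1}^n (x_j + r) \qquad \text{for every } r > 0 .
\]

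To prove this I would simply expand the product: $\prod_{j=1}^n (x_j + r) = \sum_{i=0}^n E^{(n)}_i(\underline{x}) \, r^{n-i}$, hence
\[
r^{k-n} \prod_{j=1}^n (x_j + r) = \sum_{i=0}^n E^{(n)}_i(\underline{x}) \, r^{k-i} .
\]
The term $i = k$ on the right is exactly $E^{(n)}_k(\underline{x})$, and all the other terms are nonnegative because the $x_j$ are nonnegative and $r > 0$; the inequality, and thus \eqref{e.main_upper}, follows immediately.

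For the equality statement, observe that equality in \eqref{e.main_upper} holds if and only if $\inf_{r > 0} g(r) = 0$, where $g(r) \coloneqq \sum_{i \ne k} E^{(n)}_i(\underline{x}) \, r^{k-i}$. Since the $i = 0$ term contributes $r^k$, the function $g$ is continuous and strictly positive on $(0,\infty)$ and tends to $+\infty$ as $r \to +\infty$. If $E^{(n)}_i(\underline{x}) = 0$ for all $i > k$, then $g(r) = \sum_{i < k} E^{(n)}_i(\underline{x}) \, r^{k-i} \to 0$ as $r \to 0^+$, so $\inf g = 0$; conversely, if $E^{(n)}_i(\underline{x}) > 0$ for some $i > k$, then $g(r) \to +\infty$ as $r \to 0^+$, which together with $g > 0$ forces $\inf g > 0$. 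So equality holds iff $E^{(n)}_i(\underline{x}) = 0$ for all $i \in \{k+1,\dots,n\}$. When $k = n$ this is vacuous. When $k < n$, since $x_1 \ge \cdots \ge x_n$, vanishing of $E^{(n)}_{k+1}(\underline{x})$ amounts to $x_1 \cdots x_{k+1} = 0$, i.e.\ $x_{k+1} = 0$, and this in turn kills all $E^{(n)}_i(\underline{x})$ with $i \ge k+1$. This yields the claimed characterization.

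I do not anticipate a genuine obstacle here; the one point requiring attention is that the infimum defining $\hsm_{k/n}$ is generally not attained — in the equality case it is approached only as $r \to 0^+$ — so the analysis of the equality case must be conducted at the level of the infimum rather than of a minimizing value of $r$.
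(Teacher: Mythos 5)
Your proof is correct, but it takes a genuinely different route from the paper's. For the key inequality $E^{(n)}_k(\underline{x}) \le r^{k-n}\prod_{j=1}^n(x_j+r)$, the paper writes $E^{(n)}_k$ as a Cauchy contour integral of $z^{-(n-k+1)}\prod_j(x_j+z)$ and takes absolute values (see \eqref{e.abs_value}--\eqref{e.nice_ineq}), whereas you expand $\prod_j(x_j+r)$ by Vieta and observe that $E^{(n)}_k(\underline{x})\,r^{n-k}$ is just one of the nonnegative summands; this is more elementary (no complex analysis) and is really the paper's absolute-value bound in algebraic disguise. The bigger divergence is in the equality analysis: the paper splits into subcritical, critical and supercritical cases according to whether $x_{k+1}>0$, $x_k>0=x_{k+1}$, or $x_k=0$, computes both sides in the latter two cases via \cref{p.critical_supercritical}, and in the subcritical case invokes attainment of the infimum (\cref{p.oldstyle}) together with strictness of the integrand estimate; you instead study $g(r)=\sum_{i\ne k}E^{(n)}_i(\underline{x})\,r^{k-i}$ directly and characterize when $\inf_{r>0}g=0$, which is cleaner and self-contained (it bypasses the structural results on the HS barycenter entirely). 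The only point stated tersely is the converse direction: concluding $\inf g>0$ from $g>0$ uses that $g$ is continuous and tends to $+\infty$ both as $r\to 0^+$ (via the term with $i>k$) and as $r\to+\infty$ (via the term $r^k$), so the infimum is a minimum over a compact subinterval; you have all these ingredients on the page, so this is a matter of spelling out one sentence, not a gap. What the paper's route buys in exchange is the link to the saddle-point computation of Halász--Székely, but for this particular theorem your argument is a perfectly valid and arguably simpler substitute.
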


\begin{proof}
Our starting point is Vieta's formula: 
\begin{equation}
z^n + \sum_{\ell = 1}^n E^{(n)}_\ell(x_1, \dots, x_n) z^{n-\ell} = \prod_{j=1}^n (x_j+z) \, .
\end{equation}
Therefore, by Cauchy's formula, for any $r>0$:
\begin{equation} \label{e.Cauchy_complex_var}
E^{(n)}_k(x_1, \dots, x_n) = \frac{1}{2\pi \mathbf{i}} \ointctrclockwise_{|z|=r} \frac{1}{z^{n-k+1}} \prod_{j=1}^n (x_j+z) \, dz \, .
\end{equation}
That is,
\begin{equation}\label{e.Cauchy_real_var}
E^{(n)}_k(x_1, \dots, x_n) = \frac{1}{2\pi} \int_{-\pi}^{\pi}(r e^{\mathbf{i}\theta})^{-n+k} \prod_{j=1}^n (x_j + r e^{\mathbf{i}\theta})  \, d\theta
\, ,
\end{equation}
Taking absolute values,
\begin{equation}\label{e.abs_value}
E^{(n)}_k(x_1, \dots, x_n) 
\le 
\frac{1}{2\pi} \int_{-\pi}^{\pi} r^{-n+k} \prod_{j=1}^n |x_j + r e^{\mathbf{i}\theta}|  \, d\theta 
\le r^{-n+k} \prod_{j=1}^n (x_j + r) \, .
\end{equation}
But these inequalities are valid for all $r>0$, and therefore:
\begin{equation}\label{e.nice_ineq}
E^{(n)}_k(x_1, \dots, x_n) \le \inf_{r>0} r^{-n+k} \prod_{j=1}^n (x_j + r) \, .
\end{equation}
So formulas \eqref{e.hsm_formula} and \eqref{e.def_sym_mean_again} imply inequality \eqref{e.main_upper}.

Now let us investigate the possibility of equality. 
We consider three mutually exclusive cases, which correspond to the classification \eqref{e.def_cases}:
\begin{equation}
\left.
\begin{tabular}{l l}
subcritical case:        & $x_{k+1}>0$ \\ 
critical case:           & $x_k > 0 = x_{k+1}$ \\ 
supercritical case:      & $x_k = 0$ 
\end{tabular}
\qquad
\right\}
\end{equation}
Using \cref{p.critical_supercritical},
in the critical case we have:
\begin{equation}
\sym_k (\underline{x}) = \left(\frac{x_1\cdots x_k}{\binom{n}{k}}\right)^{\frac{1}{k}} 
\quad \text{and} \quad
\hsm_{k/n}(\underline{x}) = \frac{(x_1\cdots x_k)^{\frac{1}{k}}}{B(\frac{k}{n})}  \, ,
\end{equation}
while in the supercritical case the two means vanish together. 
So, in both cases, 
inequality \eqref{e.main_upper} becomes an equality. 
Now suppose we are in the subcritical case; then the $\inf$ at the RHS of \eqref{e.nice_ineq} is attained at some $r > 0$: see \cref{p.oldstyle}.
On the other hand, for this (and actually any) value of $r$, the second inequality in \eqref{e.abs_value} must be strict, because the integrand is non-constant. 
We conclude that, in the subcritical case, inequality \eqref{e.nice_ineq} is strict, and therefore \eqref{e.main_upper} is strict.
\end{proof}

\medskip

The first inequality in \eqref{e.main} is a particular case of an inequality between two types of \emph{matrix means} introduced in \cite{BIP}, which we now explain. Let $A = (a_{i,j})_{i,j\in\{1,\dots,n\}}$ be a $n \times n$ matrix with nonnegative entries. Recall that the \emph{permanent} of $A$ is the ``signless determinant''
\begin{equation}\label{e.def_per}
\per(A) \coloneqq \sum_{\sigma} \prod_{i=1}^n a_{i,\sigma(i)} \, ,
\end{equation}
where $\sigma$ runs on the permutations of $\{1, \dots, n\}$.
Then the \emph{permanental mean} of $A$ is defined as:
\begin{equation}\label{e.def_p,}
\pmean(A) \coloneqq \left( \frac{\per(A)}{n!} \right)^{\frac{1}{n}} \, .
\end{equation}
On the other hand, the \emph{scaling mean} of the matrix $A$ is defined as:
\begin{equation}\label{e.def_sm}
\smean(A) \coloneqq \frac{1}{n^2} \inf_{u,v} \frac{u^\top A v}{\gmean(u)\gmean(v)} \, ,
\end{equation}
where $u$ and $v$ run on the set of strictly positive column vectors,
and $\gmean(\mathord{\cdot})$ denotes the geometric mean of the entries of the vector.
Equivalently,
\begin{equation}\label{e.sm_inf_formula}
\smean(A) = \frac{1}{n} \inf_{v} \frac{\gmean(Av)}{\gmean(v)} \, ;
\end{equation}
see \cite[Rem.~2.6]{BIP}.\footnote{Incidentally, formula \eqref{e.sm_inf_formula} shows that, up to the factor $1/n$, the scaling mean is a \emph{matrix antinorm} in the sense defined by \cite{GZ}.}
By \cite[Thrm.~2.17]{BIP},
\begin{equation}\label{e.our_vdW}
\smean(A) \le \pmean(A) \, ,
\end{equation}
with equality if and only if $A$ has permanent $0$ or rank $1$.
This inequality is far from trivial.
Indeed, if the matrix $A$ is doubly stochastic (i.e.\ row and column sums are all $1$), then an easy calculation (see \cite[Prop.~2.4]{BIP}) shows that $\smean(A) = \frac{1}{n}$, so \eqref{e.our_vdW} becomes $\pmean(A) \ge \frac{1}{n}$, or equivalently, 
\begin{equation}\label{e.vdW}
\per(A) \ge \frac{n!}{n^n} \quad \text{(if $A$ is doubly stochastic).}
\end{equation}
This lower bound on the permanent of doubly stochastic matrices was conjectured in 1926 by van~der~Waerden and, after a protracted series of partial results, proved around 1980 independently by Egorichev and Falikman: see \cite[Chapter~5]{Zhan} for the exact references and a self-contained proof, and \cite{Gurvits} for more recent developments. 
Our inequality \eqref{e.our_vdW}, despite being a generalization of Egorichev--Falikman's \eqref{e.vdW}, is actually a relatively simple corollary of it: we refer the reader to \cite[\S~2]{BIP} for more information.\footnote{The proof of \cite[Thrm.~2.17]{BIP} uses a theorem on the existence of particular type of matrix factorization called Sinkhorn decomposition. The present article only needs the inequality \eqref{e.our_vdW} for matrices of a specific form \eqref{e.special}. So the use of the existence theorem could be avoided, since it is possible to explicitly compute the corresponding Sinkhorn decomposition.}

\medskip

We are now in position to complete the proof of \cref{t.main}, i.e., to prove the second inequality in \eqref{e.main}. The next result also characterizes the cases of equality.

\begin{theorem}\label{t.main_lower}
If $\underline{x} = (x_1,\dots,x_n)$ with $x_1 \ge \cdots \ge x_n \ge 0$, and $1 \le k \le n$, then:
\begin{equation}\label{e.main_lower}
\hsm_{k/n}(\underline{x}) \le \sym_k(\underline{x}) \, .
\end{equation}
Furthermore, equality holds if and only if :
\begin{equation}\label{e.cases_equality}
k = n \quad \text{or} \quad 
x_1 = \cdots = x_n \quad \text{or} \quad 
x_k = 0 \, . 
\end{equation}
\end{theorem}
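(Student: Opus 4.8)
The plan is to realize $\sym_k(\underline x)^k$ as the permanental mean of a specific matrix and $\hsm_{k/n}(\underline x)$ as (a constant times) the scaling mean of the same matrix, then invoke the inequality \eqref{e.our_vdW} from \cite{BIP}. Concretely, consider the $n \times n$ matrix $A = A(\underline x)$ whose first $k$ rows are all equal to the row vector $(x_1,\dots,x_n)$ and whose remaining $n-k$ rows are all equal to $(1,1,\dots,1)$; that is,
\begin{equation}\label{e.special}
a_{i,j} = \begin{cases} x_j & \text{if } i \le k, \\ 1 & \text{if } i > k. \end{cases}
\end{equation}
First I would compute $\per(A)$. Expanding the permanent over permutations $\sigma$ of $\{1,\dots,n\}$, the contribution of $\sigma$ is $\prod_{i \le k} x_{\sigma(i)}$, which depends only on the $k$-element set $\{\sigma(1),\dots,\sigma(k)\}$; counting the number of permutations realizing a given such set $S$ (namely $k!\,(n-k)!$) gives $\per(A) = k!\,(n-k)! \sum_{|S|=k} \prod_{j\in S} x_j = k!\,(n-k)!\,E^{(n)}_k(\underline x)$. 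Hence $\pmean(A) = \big(\per(A)/n!\big)^{1/n} = \big(E^{(n)}_k(\underline x)/\binom{n}{k}\big)^{1/n} = \sym_k(\underline x)^{k/n}$.

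Next I would compute $\smean(A)$ via the formula \eqref{e.sm_inf_formula}, $\smean(A) = \frac1n \inf_v \gmean(Av)/\gmean(v)$. Given a strictly positive vector $v = (v_1,\dots,v_n)$, write $s \coloneqq \sum_j v_j$ and $q \coloneqq \sum_j x_j v_j$. Then $Av$ has its first $k$ coordinates equal to $q$ and its last $n-k$ coordinates equal to $s$, so $\gmean(Av) = q^{k/n} s^{(n-k)/n}$. Thus $\smean(A) = \frac1n \inf_v \big(\sum_j x_j v_j\big)^{k/n}\big(\sum_j v_j\big)^{(n-k)/n} / \gmean(v)$. Homogeneity lets me normalize, say, $\sum_j v_j = $ fixed, or more usefully set $v_j = 1/(x_j + r)$ for a scalar parameter $r>0$; I expect the infimum to be attained (or approached) along this one-parameter family, connecting directly to the $\inf_{r>0}$ in formula \eqref{e.hsm_formula}. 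Carrying out the substitution and optimizing should yield $\smean(A) = B(k/n)^{n/k}\,\hsm_{k/n}(\underline x)^{k/n}$, possibly up to checking that the unconstrained infimum over all positive $v$ really reduces to this family (which I would justify by a Lagrange-multiplier / convexity argument, noting that the function being minimized is, after taking logs, convex in suitable coordinates, so its critical points are global minima, and the critical-point equations force $v_j$ to have the form $\text{const}/(x_j + r)$). Then \eqref{e.our_vdW}, $\smean(A) \le \pmean(A)$, raised to the power $n/k$, is exactly $B(k/n)\,\hsm_{k/n}(\underline x) \le \sym_k(\underline x) \cdot$, wait—reading off carefully, $\smean(A)^{n/k} \le \pmean(A)^{n/k}$ gives $B(k/n)^{n/k \cdot k/n}\hsm_{k/n}(\underline x) \le \sym_k(\underline x)$; since $B \le 1$ this would be weaker than \eqref{e.main_lower}, so in fact the correct bookkeeping must give $\smean(A) = \hsm_{k/n}(\underline x)^{k/n}$ directly (the $B(c)$ factors cancelling against normalization constants), and I would track the constants meticulously at this step to confirm $\hsm_{k/n}(\underline x) \le \sym_k(\underline x)$.

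For the equality characterization, I would use the equality case of \eqref{e.our_vdW} from \cite[Thrm.~2.17]{BIP}: equality holds if and only if $A$ has permanent $0$ or rank $1$. Now $\per(A) = k!\,(n-k)!\,E^{(n)}_k(\underline x) = 0$ iff $E^{(n)}_k(\underline x) = 0$ iff at least $n-k+1$ of the $x_i$ vanish, i.e. (since they are ordered) $x_k = 0$. And $A$ has rank $1$ iff the row $(x_1,\dots,x_n)$ is proportional to $(1,\dots,1)$, i.e. $x_1 = \cdots = x_n$, OR the first $k$ rows are absent as a constraint, which happens exactly when $k = n$ (then $A$ has only the rows $(x_1,\dots,x_n)$, automatically rank $\le 1$ only if... — here I need to be slightly careful: if $k=n$ there are no all-ones rows and $A$ has all $n$ rows equal to $(x_1,\dots,x_n)$, which has rank $1$ unless that vector is zero, but in any case $\sym_n = \hsm_1 = \gmean$ and equality is classical). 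Matching these three alternatives against \eqref{e.cases_equality} gives exactly $k=n$ or $x_1 = \cdots = x_n$ or $x_k = 0$.

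The main obstacle I anticipate is the precise evaluation of $\smean(A)$ — specifically, proving that the infimum over \emph{all} positive vectors $v$ reduces to the explicit one-parameter family $v_j = 1/(x_j+r)$ and then matching the resulting expression, including the $B(k/n)$ constant, to formula \eqref{e.hsm_formula} for $\hsm_{k/n}(\underline x)$. This is a calculus optimization that is conceptually routine but where the constants must be handled with care; everything else (the permanent computation, the application of \eqref{e.our_vdW}, and the equality analysis) is straightforward once the matrix \eqref{e.special} is in hand.
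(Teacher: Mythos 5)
Your strategy is exactly the paper's: encode the data in an $n\times n$ matrix with the $x_i$'s in one block and $1$'s in the other, identify $\sym_k(\underline{x})^{k/n}$ with $\pmean(A)$ and $\hsm_{k/n}(\underline{x})^{k/n}$ with $\smean(A)$, invoke \eqref{e.our_vdW}, and read off the equality cases from \cite[Thm.~2.17]{BIP} (your rank-$1$ analysis, giving ``$k=n$ or $x_1=\cdots=x_n$'', is correct). The permanent computation and the matching of constants are fine: with the $\tfrac1n$ from \eqref{e.sm_inf_formula} kept in place, the normalization $k^{k/n}(n-k)^{(n-k)/n}/n = B(k/n)^{k/n}$ makes $\smean(A)=\hsm_{k/n}(\underline{x})^{k/n}$ exactly, so your worry about a stray $B(c)$ factor resolves itself.

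The one genuine gap is the step you yourself flag: evaluating $\smean(A)$. Because you put the $x_i$'s in \emph{rows} rather than columns, $Av$ has only two distinct entries $\sum_j x_j v_j$ and $\sum_j v_j$, and the infimum over $v$ is a genuinely $n$-dimensional problem. Your Lagrange/convexity reduction to the family $v_j=\mathrm{const}/(x_j+r)$ works only when an interior critical point exists, i.e.\ when $\sum_j \frac{x_j}{x_j+\rho}=k$ has a root $\rho\in(0,\infty)$, which requires $x_{k+1}>0$ (and $k<n$). When $x_{k+1}=0$ the infimum is not attained, and restricting to the one-parameter family only bounds $\smean(A)$ from \emph{above}, which is the wrong direction: to deduce $\hsm_{k/n}\le\sym_k$ you need the full infimum to be no smaller than $\hsm_{k/n}^{k/n}$. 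The supercritical case $x_k=0$ is harmless (both means vanish), but the critical case $x_k>0=x_{k+1}$ is not, and it also enters the ``only if'' half of the equality statement, where you need the exact value of $\smean(A)$ (or a direct strictness argument such as $\binom{n}{k}B(k/n)^k<1$ for $0<k<n$, or a perturbation $x_j\mapsto x_j+\epsilon$ plus continuity). The cheapest repair is to transpose: both $\per$ and $\smean$ are invariant under transposition (the latter by the symmetric formula \eqref{e.def_sm}), and with the $x_i$'s in columns $Av$ has entries $sx_i+r$ depending on $v$ only through the two block sums, so a pointwise AM--GM on $\gmean(v)$ collapses the optimization to the one-parameter family \emph{exactly}, in all cases and with no attainment issues --- this is precisely what the paper's arrangement \eqref{e.special} buys.
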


\begin{proof}
Consider the nonnegative $n \times n$ matrix:
\begingroup 
\setlength\arraycolsep{1pt}
\begin{equation}\label{e.special}
A \coloneqq 
\begin{pmatrix}
x_1     &   \cdots  &   x_1     &   \, 1 \, &  \, \cdots \, &   \, 1 \, \\[8pt]
\vdots  &           &   \vdots  &   \vdots  &               &   \vdots  \\[8pt]
x_n     &   \cdots  &   x_n     &   \, 1 \, &  \, \cdots \, &   \, 1 \, 
\end{pmatrix}
\quad \text{with $n-k$ columns of $1$'s.}
\end{equation}
\endgroup
Note that:
\begin{equation}
\per(A) = k! \, (n-k)! \,  E^{(n)}_k(x_1,\dots,x_n) 
\end{equation}
and so
\begin{align}
\pmean(A) &= \left(\frac{E^{(n)}_k(x_1,\dots,x_n)}{\binom{n}{k}}\right)^{\frac{1}{n}} \\ 
&= \left[\sym_k(\underline{x}) \right]^{k/n} \, . \label{e.special_pmean}
\end{align}

Now let's compute the scaling mean of $A$ using formula \eqref{e.sm_inf_formula}.
Assume that $k<n$.
Given a column vector $v = \left( \begin{smallmatrix} v_1 \\ \vdots \\ v_n \end{smallmatrix} \right)$ with positive entries,
we have:
\begin{equation}
\gmean(Av) = \prod_{i=1}^{n} (sx_i+r)^\frac{1}{n} \, , \quad \text{where } s \coloneqq  \sum_{i=1}^{k} v_i \text{ and } r \coloneqq  \sum_{i=k+1}^{n} v_i \, .
\end{equation}
On the other hand, by the inequality of arithmetic and geometric means,
\begin{equation}
\gmean(v) \le \left(\frac{s}{k}\right)^{\frac{k}{n}} \left(\frac{r}{n-k}\right)^{\frac{n-k}{n}} \, ,
\end{equation}
with equality if $v_1 = \cdots = v_k = \frac{s}{k}$, $v_{k+1} = \cdots = v_n = \frac{r}{n-k}$.
So, in order to minimize the quotient $\frac{\gmean(Av)}{\gmean(v)}$, it is sufficient to consider column vectors $v$ satisfying these conditions.
We can also normalize $s$ to $1$, and \eqref{e.sm_inf_formula} becomes:
\begin{align}
\smean(A) 
&= \inf_{r>0} \frac{\prod_{i=1}^{n} (x_i+r)^\frac{1}{n}}{ \left(\frac{1}{k}\right)^{\frac{k}{n}} \left(\frac{r}{n-k}\right)^{\frac{n-k}{n}} } \\
&= \left[\hsm_{k/n}(\underline{x})\right]^{k/n}\,  \label{e.special_smean}
\end{align}
by \eqref{e.hsm_formula}.
This  formula $\smean(A) =  \left[\hsm_{k/n}(\underline{x})\right]^{k/n}$ also holds for $k=n$, taking the form $\smean(A) = (x_1 \cdots x_n)^{\frac{1}{n}}$; this can be checked either by adapting the proof above, or more simply by using the homogeneity and reflexivity properties of the scaling mean (see \cite{BIP}).

In conclusion, the matrix \eqref{e.special} has scaling and permanental means given by formulas \eqref{e.special_smean} and \eqref{e.special_pmean}, respectively, and the fundamental inequality \eqref{e.our_vdW} translates into $\hsm_{k/n}(\underline{x}) \le \sym_k(\underline{x})$, that is, \eqref{t.main_lower}.

Furthermore, equality holds if and only if the matrix $A$ defined by \eqref{e.special} satisfies $\smean(A) = \pmean(A)$, by formulas \eqref{e.special_smean} and \eqref{e.special_pmean}. 
As mentioned before,  $\smean(A) = \pmean(A)$ if and only if $A$ has rank $1$ or permanent $0$ (see \cite[Thrm.~2.17]{BIP}).
Note that $A$ has rank $1$ if and only if $k=1$ or $x_1=\dots=x_n$.
On the other hand, by \eqref{e.special_pmean}, 
$A$ has permanent $0$ if and only if $\sym_k(\underline{x})=0$, or equivalently $x_k=0$.
So we have proved that equality $\hsm_{k/n}(\underline{x}) = \sym_k(\underline{x})$ is equivalent to condition \eqref{e.cases_equality}.
\end{proof}

We close this section with some comments on related results. 

\begin{remark}
In \cite{HS76}, the asymptotics of the integral \eqref{e.Cauchy_complex_var} are determined using the saddle point method (see e.g. 
\cite[Section 15.4]{Simon}). However, for this method to work, the saddle must be steep, that is, the second derivative at the saddle must be large in absolute value.  Major \cite[p.~1987]{Major} 
discusses this situation: if the second derivative vanishes, then ``a more sophisticated method has to be applied and only weaker results can be obtained in this case. We shall not discuss this question in the present paper''.
On the other hand, in the general situation covered by our \cref{t.main}, the saddle can be flat.
(It must be noted that the setting considered by Major is different, since he allows random variables to be negative.) 
\end{remark}



\begin{remark}
Given an arbitrary $n \times n$ non-negative matrix $A$, the permanental and scaling means satisfy the following inequalities (see  \cite[Theorem 2.17]{BIP}),
\begin{equation} \label{ine:sm.pm}
\smean(A) \leq \pmean(A) \leq n(n!)^{-1/n} \smean(A).
\end{equation}
The sequence $(n(n!)^{-1/n})$ is increasing and converges to $e$. In general, as $n$ tends to infinity the permanental mean does not necessarily converges to the scaling mean. However, there are some special classes of matrices for which this is indeed the case: for example, in the repetitive situation covered by the \emph{Generalized Friedland limit} \cite[Theorem~2.19]{BIP}. Note that  $\hsm_{k/n}(\underline{x})$ and      $\sym_k(\underline{x})$  correspond to the $n/k-$th power of the scaling and permanental mean of the matrix $A$, respectively. Therefore,  \eqref{e.main} can be regarded as an improvement of \eqref{ine:sm.pm} for this particular class of matrices.
\end{remark}

\begin{remark} 
A natural extension of symmetric means are \emph{Muirhead means}, see \cite[\S~2.18]{HLP}, \cite[\S~V.6]{Bullen} for definition and properties. 
Accordingly, it should be possible to define a family of barycenters extending the HS barycenters, taking over from \cite[\S~5.2]{BIP}. 
An analogue of inequality \eqref{e.main_lower} holds in this extended setting, again as a consequence of the key inequality \eqref{e.our_vdW} between matrix means. However, we do not know if inequality \eqref{e.main_upper} can be extended in a comparable level of generality. 
\end{remark}

\section{Continuity of the HS barycenter}\label{s.continuity}


In this section we study the continuity of the HS barycenter as a two-variable function, $(\mu,c) \mapsto [\mu]_c$, defined in the space $\cP(\R_\plus) \times [0,1]$. 
The most natural topology on $\cP(\R_\plus)$ is the weak topology (defined below).
The barycenter function is not continuous with respect to this topology, but, on the positive side, it is lower semicontinuous, except in a particular situation. 
In order to obtain better results, we need to focus on subsets of measures satisfying the natural integrability conditions (usually \eqref{e.HS_integrability}, but differently for the extremal parameters $c=0$ and $c=1$), and endow these subsets with stronger topologies that are well adapted to the integrability assumptions.

In a preliminary subsection, we collect some general facts on topologies on spaces of measures. In the remaining subsections we prove several results on continuity of the HS barycenter. 
And all these results will be used in combination to prove our general ergodic theorem in \cref{s.ergodic}.


\subsection{Convergence of measures} \label{sec:spaces_measures}

If $(X, \mathrm{d})$ is a separable complete metric space, let  $C_\mathrm{b}(X)$ be the set of all continuous bounded real functions on $X$, and let $\cP(X)$ denote the set of all Borel probability measures on $X$.
Recall (see e.g.\ \cite{Partha_book}) that the \emph{weak topology} is a metrizable topology on $\cP(X)$ according to with a sequence $(\mu_n)$ converges to some $\mu$ if and only if  $\int \phi \, d \mu_n \to \int \phi \, d \mu$ for every test function $\phi \in C_\mathrm{b}(X)$; 
we say that $(\mu_n)$ converges \emph{weakly} to $\mu$, and denote this by $\mu_n \rightharpoonup \mu$. 
The space $\cP(X)$ is Polish, 
and it is compact if and only if $X$ is compact. 
Despite the space $C_\mathrm{b}(X)$ being huge (nonseparable w.r.t.\ its usual topology if $X$ is noncompact), by \cite[Theorem~II.6.6]{Partha_book} we can nevertheless find a \emph{countable} subset $\cC \subseteq C_\mathrm{b}(X)$ 
such that, for all $(\mu_n)$ and $\mu$ in $\cP(X)$, 
\begin{equation}\label{e.countable_test}
\mu_n \rightharpoonup \mu \quad \Longleftrightarrow \quad \forall \phi \in \cC, \ {\textstyle \int \phi \, d \mu_n \to \int \phi \, d \mu } \, .
\end{equation}

The following result 
deals with sequences of integrals $\int \phi_n \, d\mu_n$ where not only the measures but also the integrands vary, and bears a resemblance to Fatou's Lemma:

\begin{proposition}\label{p.Fatou}
Suppose that $(\mu_n)$ is a sequence in $\cP(X)$ converging weakly to some measure $\mu$,
and that $(\phi_n)$ is a sequence of continuous functions on $X$ converging uniformly on compact subsets to some function $\phi$.
Furthermore, assume that the functions $\phi_n$ are bounded from below by a constant $-C$ independent of $n$.
Then, $\liminf_{n \to \infty} \int \phi_n \, d\mu_n \geq \int \phi \, d\mu$.
\end{proposition}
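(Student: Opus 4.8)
The plan is to reduce to the classical Portmanteau-type / Fatou argument by truncating both the integrand and the region of integration. First I would reduce to the case $\phi \geq 0$: replace $\phi_n$ by $\phi_n + C$ and $\phi$ by $\phi + C$, which changes both sides by the same constant $C$ (since $\mu_n, \mu$ are probability measures) and makes all integrands nonnegative. So assume $\phi_n \geq 0$ and $\phi \geq 0$.

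Next, fix $\epsilon > 0$ and a threshold $M > 0$, and introduce the truncated functions $\psi_n \coloneqq \min(\phi_n, M)$ and $\psi \coloneqq \min(\phi, M)$. These are continuous, bounded (by $M$), nonnegative, and $\psi_n \to \psi$ uniformly on compact sets, because $t \mapsto \min(t,M)$ is $1$-Lipschitz. Since $\int \phi_n \, d\mu_n \geq \int \psi_n \, d\mu_n$, it suffices to bound $\liminf_n \int \psi_n \, d\mu_n$ from below. The key sub-claim is that for bounded, nonnegative, equi-bounded integrands converging uniformly on compacts, one has $\int \psi_n \, d\mu_n \to \int \psi \, d\mu$. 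To see this, write $\int \psi_n \, d\mu_n - \int \psi \, d\mu = \int (\psi_n - \psi)\, d\mu_n + \big(\int \psi \, d\mu_n - \int \psi \, d\mu\big)$. The second term tends to $0$ because $\psi \in C_\mathrm{b}(X)$ and $\mu_n \rightharpoonup \mu$. For the first term, use tightness of the weakly convergent sequence $(\mu_n)$ (Prokhorov): choose a compact $K$ with $\mu_n(X \setminus K) < \epsilon/M$ for all $n$; on $K$ we have $|\psi_n - \psi| \to 0$ uniformly, and off $K$ we bound $|\psi_n - \psi| \leq M$, so $\big|\int (\psi_n - \psi)\, d\mu_n\big| \leq \sup_K |\psi_n - \psi| + M \cdot \tfrac{\epsilon}{M} \to \epsilon$ as $n \to \infty$. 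Hence $\liminf_n \int \psi_n \, d\mu_n \geq \int \psi \, d\mu - \epsilon$, and letting $\epsilon \to 0$ gives $\liminf_n \int \psi_n \, d\mu_n \geq \int \psi \, d\mu = \int \min(\phi, M)\, d\mu$.

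Finally, combine: $\liminf_n \int \phi_n \, d\mu_n \geq \liminf_n \int \psi_n \, d\mu_n \geq \int \min(\phi, M)\, d\mu$ for every $M > 0$. Letting $M \to \infty$ and applying the monotone convergence theorem to $\min(\phi, M) \uparrow \phi$ yields $\liminf_n \int \phi_n \, d\mu_n \geq \int \phi \, d\mu$, which is the desired inequality (after subtracting back the constant $C$).

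The main obstacle is the first term $\int (\psi_n - \psi)\, d\mu_n$, where both the integrand and the measure vary; this is exactly where tightness of $(\mu_n)$ is essential, and it is the reason the hypothesis "uniform convergence on compact subsets" (rather than pointwise) is needed. Everything else is bookkeeping: the reduction to nonnegative integrands, the truncation at level $M$, and the final monotone convergence step are all routine. One should also double-check that $\phi$, being a uniform-on-compacts limit of continuous functions, is continuous (hence measurable) and bounded below by $-C$, so that all the integrals above are well-defined in $[-C, +\infty]$.
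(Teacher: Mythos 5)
Your proof is correct and follows essentially the same route as the paper's: reduce to nonnegative integrands, truncate at a finite level, split the integral into a varying part handled by Prokhorov tightness plus uniform convergence on a compact set and a fixed part handled by weak convergence, then remove the truncation by monotone convergence. The only cosmetic difference is that you truncate at an arbitrary level $M$ and let $M \to \infty$ at the end, while the paper fixes the truncation level in advance via a chosen $\lambda < \int \phi \, d\mu$.
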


Note that, as in Fatou's Lemma, the integrals in \cref{p.Fatou} can be infinite.

\begin{proof}
Without loss of generality, assume that $C=0$. Let $\lambda \in \R$ be such that $\lambda < \int \phi \, d\mu$. By the monotone convergence theorem, there exists $m \in \N$ such that $\int \min \{ \phi, m\} \, d\mu > \lambda$. For a function $\psi:X \to \R$, let $\hat{\psi}(x):=\min \{ \psi(x), m\}$. Note that
\begin{align} \label{ine}
 \int \phi_n \, d\mu_n \geq \int \hat{\phi}_n \, d\mu_n = \int \left(\hat{\phi}_n -\hat{\phi} \right)\, d\mu_n + \int \hat{\phi} \, d\mu_n \, .
\end{align}
By Prokhorov's theorem (see e.g.\ \cite[Theorem~6.7]{Partha_book}), the sequence $(\mu_n)$ forms a \emph{tight} set, that is, for every $\epsilon > 0$, there exists a compact set $K \subseteq X$ such that $\mu_n(X \setminus K) \le \epsilon/(2m)$ for all $n$. Since $(\phi_n)$ converges uniformly on compact subsets to  $\phi$, we obtain:
\begin{equation} \label{eq:cero_en_K}
\lim_{n \to \infty} \left| \int_K \left(\hat{\phi}_n -\hat{\phi} \right)\, d\mu_n  \right| \leq 
\lim_{n \to \infty} \sup_{x \in K} \left| \hat{\phi}_n(x) -\hat{\phi}(x) \right| =0 \, .
\end{equation}
We also have:
\begin{equation} \label{eq:cero_K_comple}
\left| \int_{X \setminus K} \left(\hat{\phi}_n -\hat{\phi} \right)\, d\mu_n  \right| \leq \mu_n(X \setminus K) \sup_ {x \in X \setminus K} \left| \hat{\phi}_n(x) -\hat{\phi}(x) \right| < \frac{\epsilon}{2m} 2m= \epsilon.
\end{equation}
Since $(\mu_n)$ converges weakly to $\mu$, we have $\lim_{n \to \infty} \int \hat{\phi} \, d\mu_n =\int \hat{\phi} \, d\mu > \lambda$. Therefore, combining \eqref{ine}, \eqref{eq:cero_en_K} and \eqref{eq:cero_K_comple}, for sufficiently large values of $n$ we obtain $ \int \phi_n \, d\mu_n > \lambda$. The result now follows.
\end{proof}

The next direct consequence is useful.

\begin{corollary}\label{c.conv_int_weak}
Suppose that $(\mu_n)$ is a sequence in $\cP(X)$ converging weakly to some measure $\mu$,
and that $(\phi_n)$ is a sequence of continuous functions on $X$ converging uniformly on compact subsets to some function $\phi$.
Furthermore, assume that the functions $|\phi_n|$ are bounded by a constant $C$ independent of $n$.
Then, $\int \phi_n \, d\mu_n \to \int \phi \, d\mu$.
\end{corollary}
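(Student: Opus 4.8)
The plan is to deduce \cref{c.conv_int_weak} directly from \cref{p.Fatou} by applying the latter twice, once to $(\phi_n)$ and once to $(-\phi_n)$. Indeed, the hypotheses of \cref{c.conv_int_weak} are symmetric in sign: if $|\phi_n| \le C$ for all $n$, then both $\phi_n \ge -C$ and $-\phi_n \ge -C$, and $(-\phi_n)$ converges uniformly on compact subsets to $-\phi$.

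Here is how I would carry it out. First, observe that $\phi$ itself satisfies $|\phi| \le C$ (as a locally uniform limit of functions bounded by $C$), so $\int \phi \, d\mu$ is a finite real number, and likewise every $\int \phi_n \, d\mu_n$ is finite; there are no infinities to worry about, unlike in \cref{p.Fatou}. Next, apply \cref{p.Fatou} to the sequence $(\phi_n)$: the functions are bounded below by $-C$, so we get
\begin{equation}
\liminf_{n \to \infty} \int \phi_n \, d\mu_n \ge \int \phi \, d\mu \, .
\end{equation}
Then apply \cref{p.Fatou} to the sequence $(-\phi_n)$, which also converges uniformly on compact subsets (to $-\phi$) and is bounded below by $-C$; this yields
\begin{equation}
\liminf_{n \to \infty} \int (-\phi_n) \, d\mu_n \ge \int (-\phi) \, d\mu \, ,
\end{equation}
that is, $\limsup_{n \to \infty} \int \phi_n \, d\mu_n \le \int \phi \, d\mu$. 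Combining the two inequalities gives $\lim_{n \to \infty} \int \phi_n \, d\mu_n = \int \phi \, d\mu$, as desired.

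There is essentially no obstacle here: the corollary is a formal consequence of \cref{p.Fatou} via the sign-flip trick, and the only point that deserves a word is the remark that all integrals involved are finite, which legitimizes passing between $\liminf$ of $\int(-\phi_n)\,d\mu_n$ and $\limsup$ of $\int \phi_n\,d\mu_n$ without sign ambiguities. The whole proof is two lines.
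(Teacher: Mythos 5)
Your proof is correct and is exactly the intended argument: the paper states the corollary as a ``direct consequence'' of \cref{p.Fatou}, meaning the same two-sided application to $\phi_n$ and $-\phi_n$ that you spell out. No issues.
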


\medskip

We will also need a slightly stronger notion of convergence. 
Let $\cP_1(X) \subseteq \cP(X)$ denote the set of measures $\mu$ with finite first moment, that is, 
\begin{equation}\label{e.1st_moment}
\int \mathrm{d}(x,x_0) \, d\mu(x) < \infty \, , 
\end{equation}
Here and in what follows, $x_0 \in X$ is a basepoint which we consider as fixed, the particular choice being entirely irrelevant.
We metrize $\cP_1(X)$ with \emph{Kantorovich metric} (see e.g.\ \cite[p.~207]{Villani}): 
\begin{equation}\label{e.KR}
W_1 (\mu, \nu) \coloneqq \sup \left\{ \int \psi \, d(\mu-\nu)  \st  \psi \colon X \to [-1,1] \text{ is $1$-Lipschitz} \right\} \, .
\end{equation}
Of course, the Kantorovich metric depends on the original metric $\mathrm{d}$ on $X$; in fact, it ``remembers'' it, since $W_1(\delta_x , \delta_y) = \mathrm{d}(x,y)$.
The metric space $(\cP_1(X), W_1)$ is called \emph{$1$-Wasserstein space}; it is separable and complete. 
Unless $X$ is compact, 
the topology 
on $\cP_1(X)$ 
is stronger than the weak topology. In fact, we have the following characterizations of convergence: 



\begin{theorem}[{\cite[Theorem~7.12]{Villani}}]\label{t.Villani}
For all $(\mu_n)$ and $\mu$ in $\cP_1(X)$, the following statements are equivalent:
\begin{enumerate}
\item\label{i.Villani_W}
$W_1(\mu_n, \mu) \to 0$.
\item\label{i.Villani_family} 
if $\phi \colon X \to \R$ is a continuous function such that $\frac{|\phi|}{1+\mathrm{d}(\mathord{\cdot},x_0)}$ is bounded, 
then $\int \phi \, d\mu_n \to \int \phi \, d\mu$.
\item\label{i.Villani_one_more} 
$\mu_n \rightharpoonup \mu$ and  $\int \mathrm{d}(\mathord{\cdot},x_0) \, d\mu_n \to \int \mathrm{d}(\mathord{\cdot},x_0) \, d\mu$. 
\item\label{i.Villani_tightness}
$\mu_n \rightharpoonup \mu$ and 
the following ``tightness condition'' holds:
\begin{equation}
\lim_{R \to \infty} \limsup_{n \to \infty} 
\int_{X \setminus B_R(x_0)} [1+\mathrm{d}(\mathord{\cdot},x_0)]\, d\mu_n = 0 \, .
\end{equation}
where $B_R(x_0)$ denotes the open ball of center $x_0$ and radius $R$.
\end{enumerate}
\end{theorem}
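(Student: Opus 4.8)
This is \cite[Theorem~7.12]{Villani}; I would reproduce its proof as follows, taking Kantorovich--Rubinstein duality and Prokhorov's theorem as known. The plan is to run the cycle of implications (\ref{i.Villani_family}) $\Rightarrow$ (\ref{i.Villani_one_more}) $\Rightarrow$ (\ref{i.Villani_tightness}) $\Rightarrow$ (\ref{i.Villani_family}), and then to close the loop with the equivalence (\ref{i.Villani_W}) $\Leftrightarrow$ (\ref{i.Villani_family}). The first arrow is immediate: applying (\ref{i.Villani_family}) to bounded continuous test functions yields $\mu_n \rightharpoonup \mu$, and applying it to $\phi = \mathrm{d}(\mathord{\cdot},x_0)$ yields convergence of first moments.

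For (\ref{i.Villani_one_more}) $\Rightarrow$ (\ref{i.Villani_tightness}) I would fix, for each $R$, a continuous cutoff $\chi_R$ with $\mathbf{1}_{B_R(x_0)} \le \chi_R \le \mathbf{1}_{B_{R+1}(x_0)}$. Since $(1+\mathrm{d}(\mathord{\cdot},x_0))\chi_R$ is bounded and continuous, weak convergence gives $\int (1+\mathrm{d})\chi_R \, d\mu_n \to \int (1+\mathrm{d})\chi_R \, d\mu$; subtracting this from $\int (1+\mathrm{d})\, d\mu_n \to \int (1+\mathrm{d})\, d\mu$ (which is where the moment hypothesis is used) gives $\int (1+\mathrm{d})(1-\chi_R)\, d\mu_n \to \int (1+\mathrm{d})(1-\chi_R)\, d\mu$. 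Because $1-\chi_R \ge \mathbf{1}_{X\setminus B_{R+1}(x_0)}$, taking $\limsup_{n}$ and then letting $R \to \infty$ (using dominated convergence against the finite measure $(1+\mathrm{d})\,d\mu$) produces the tightness condition.

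For (\ref{i.Villani_tightness}) $\Rightarrow$ (\ref{i.Villani_family}): given $\phi$ continuous with $|\phi| \le C\,(1+\mathrm{d}(\mathord{\cdot},x_0))$, I would split $\phi = \phi\chi_R + \phi(1-\chi_R)$. For fixed $R$ the first summand is bounded and continuous, so $\int \phi\chi_R \, d\mu_n \to \int \phi\chi_R\, d\mu$ by weak convergence, while the tail obeys $\bigl|\int \phi(1-\chi_R)\,d\mu_n\bigr| \le C\int_{X\setminus B_R(x_0)}(1+\mathrm{d})\,d\mu_n$, whose $\limsup_n$ tends to $0$ as $R\to\infty$ by the tightness hypothesis (and the analogous bound for $\mu$, which is tight by Prokhorov). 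For (\ref{i.Villani_W}) $\Leftrightarrow$ (\ref{i.Villani_family}): one direction is Kantorovich--Rubinstein duality, since a $1$-Lipschitz competitor satisfies the growth bound of (\ref{i.Villani_family}) after recentering at $x_0$; the converse needs one to upgrade the pointwise statement ``$\int \psi\,d(\mu_n-\mu)\to 0$ for each admissible $\psi$'' to convergence uniform over the whole family of competitors, which I would obtain by noting that this family is equicontinuous and uniformly bounded, hence totally bounded in $C(K)$ for every compact $K$ by Arzel\`a--Ascoli, and then combining weak convergence on $K$ with tightness of $(\mu_n)$ to dispose of $X\setminus K$.

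The step I expect to be the main obstacle is this final uniformity passage, together with its mirror image in (\ref{i.Villani_one_more}) $\Rightarrow$ (\ref{i.Villani_tightness}): both boil down to ruling out that a vanishing amount of mass escapes to infinity while still carrying a non-negligible share of the weight $\mathrm{d}(\mathord{\cdot},x_0)$ --- precisely the phenomenon that separates $W_1$-convergence from ordinary weak convergence, and the reason the plain Fatou-type estimate of \cref{p.Fatou} is not already enough.
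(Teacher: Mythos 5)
The paper offers no proof of this statement --- it is imported verbatim from Villani's book, \cite[Theorem~7.12]{Villani} --- so there is nothing internal to compare against; your sketch is in effect a reconstruction of the standard textbook argument. The cycle (b) $\Rightarrow$ (c) $\Rightarrow$ (d) $\Rightarrow$ (b) is correct as you present it; the only slip there is cosmetic: in (d) $\Rightarrow$ (b) the tail estimate for the limit measure $\mu$ comes from $\int (1+\mathrm{d}(\mathord{\cdot},x_0))\,d\mu<\infty$ and dominated convergence, not from Prokhorov's theorem (plain tightness of a single measure does not control the weight $\mathrm{d}(\mathord{\cdot},x_0)$ on the tail, but membership in $\cP_1(X)$ does).

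The equivalence with (a) is where the sketch is genuinely loose. First, (a) $\Rightarrow$ (b) is not ``one direction of Kantorovich--Rubinstein duality'': duality controls only integrals of $1$-Lipschitz functions, whereas (b) concerns every continuous $\phi$ with $|\phi|\le C(1+\mathrm{d}(\mathord{\cdot},x_0))$; the clean route is (a) $\Rightarrow$ (c) --- weak convergence by testing against bounded Lipschitz functions plus the portmanteau theorem, moment convergence because $\mathrm{d}(\mathord{\cdot},x_0)$ is itself a $1$-Lipschitz competitor --- and then (c) $\Rightarrow$ (d) $\Rightarrow$ (b), which you already have. Second, in (b) $\Rightarrow$ (a) the recentered competitors $\psi-\psi(x_0)$ are equicontinuous but \emph{not} uniformly bounded on a noncompact $X$ (only $|\psi-\psi(x_0)|\le \mathrm{d}(\mathord{\cdot},x_0)$), so Arzel\`a--Ascoli can only be applied on each compact $K$, and the contribution of $X\setminus K$ must be bounded, uniformly over competitors, by $\int_{X\setminus K}\mathrm{d}(\mathord{\cdot},x_0)\,d(\mu_n+\mu)$ --- that is, precisely by the reinforced tightness condition (d), not by Prokhorov tightness; your closing paragraph shows you are aware of this, but the phrase ``tightness of $(\mu_n)$'' as written does not deliver it. Finally, a caveat about the statement itself: with the displayed definition \eqref{e.KR}, where competitors are constrained to take values in $[-1,1]$, item (a) would express mere weak convergence and the equivalence would fail (take $\mu_n=(1-\tfrac1n)\delta_0+\tfrac1n\delta_n$ on $\R_\plus$); both Villani's theorem and your argument use the genuine Kantorovich metric, with all $1$-Lipschitz functions as competitors.
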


The next \lcnamecref{l.conv_int_Wass} should be compared to \cref{c.conv_int_weak}: 

\begin{lemma}\label{l.conv_int_Wass}
Suppose that $(\mu_n)$ is a sequence in $\cP_1(X)$ converging to some measure~$\mu$,
and that $(\phi_n)$ is a sequence of continuous functions on $X$ converging uniformly on bounded subsets to some function $\phi$.
Furthermore, assume that the functions $\frac{|\phi_n|}{1+\mathrm{d}(\mathord{\cdot},x_0)}$ are bounded by a constant $C$ independent of $n$.
Then, $\int \phi_n \, d\mu_n \to \int \phi \, d\mu$.
\end{lemma}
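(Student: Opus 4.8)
The plan is to mimic the proof of \cref{p.Fatou}, but now use the stronger hypothesis on both the measures (finite first moment, $W_1$-convergence) and the integrands (bound on $\frac{|\phi_n|}{1+\mathrm{d}(\cdot,x_0)}$) to upgrade the one-sided conclusion of \cref{p.Fatou} into a genuine limit. Since $|\phi_n| \le C(1+\mathrm{d}(\cdot,x_0))$ for all $n$, and the bound passes to the limit $\phi$, the integrals $\int \phi_n\,d\mu_n$ and $\int \phi\,d\mu$ are all finite by \eqref{e.1st_moment}. First I would apply \cref{p.Fatou} to the sequence $(\phi_n)$ directly: it converges uniformly on compact (hence on bounded) subsets to $\phi$, but it is only bounded below by $-C(1+\mathrm{d}(\cdot,x_0))$, not by a constant. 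So \cref{p.Fatou} does not apply as stated. The fix is to write $\phi_n = \phi_n + C(1+\mathrm{d}(\cdot,x_0)) - C(1+\mathrm{d}(\cdot,x_0))$; the functions $\psi_n \coloneqq \phi_n + C(1+\mathrm{d}(\cdot,x_0))$ are nonnegative and converge uniformly on bounded sets to $\psi \coloneqq \phi + C(1+\mathrm{d}(\cdot,x_0))$, so \cref{p.Fatou} gives $\liminf_n \int \psi_n\,d\mu_n \ge \int \psi\,d\mu$. By part \eqref{i.Villani_one_more} of \cref{t.Villani}, $\int \mathrm{d}(\cdot,x_0)\,d\mu_n \to \int \mathrm{d}(\cdot,x_0)\,d\mu$, so subtracting the (convergent) term $C\int(1+\mathrm{d}(\cdot,x_0))\,d\mu_n$ yields $\liminf_n \int \phi_n\,d\mu_n \ge \int \phi\,d\mu$.

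For the reverse inequality, I would run the same argument with $-\phi_n$ in place of $\phi_n$: the sequence $(-\phi_n)$ also converges uniformly on bounded sets to $-\phi$, and also satisfies $|-\phi_n| \le C(1+\mathrm{d}(\cdot,x_0))$, so the same reasoning gives $\liminf_n \int(-\phi_n)\,d\mu_n \ge \int(-\phi)\,d\mu$, i.e.\ $\limsup_n \int \phi_n\,d\mu_n \le \int \phi\,d\mu$. Combining the two inequalities gives $\int \phi_n\,d\mu_n \to \int \phi\,d\mu$, which is the assertion.

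There is essentially no serious obstacle here; the only point requiring a little care is that \cref{p.Fatou} is stated for integrands converging \emph{uniformly on compact subsets}, whereas \cref{l.conv_int_Wass} hypothesizes convergence \emph{uniformly on bounded subsets} — the latter implies the former (bounded sets in a complete separable metric space need not be relatively compact, but compact sets are bounded), so the application is legitimate. I should also note at the outset that all integrals in sight are finite, so that the subtractions performed above are justified (no $\infty - \infty$). Optionally, one could instead invoke part \eqref{i.Villani_family} of \cref{t.Villani} together with a uniform-integrability argument, but the route through \cref{p.Fatou} applied to $\pm\phi_n$ is cleaner and self-contained.
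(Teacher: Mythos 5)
Your proof is correct, but it takes a genuinely different route from the paper. You reduce the two-sided limit to two applications of the one-sided \cref{p.Fatou}, after the additive normalization $\psi_n \coloneqq \pm\phi_n + C\left(1+\mathrm{d}(\mathord{\cdot},x_0)\right) \ge 0$, and then peel off the correction term using the convergence of first moments, i.e.\ part~\eqref{i.Villani_one_more} of \cref{t.Villani}; all subtractions are legitimate since the growth bound $|\phi_n| \le C(1+\mathrm{d}(\mathord{\cdot},x_0))$ (which indeed passes to $\phi$) together with $\mu_n,\mu \in \cP_1(X)$ makes every integral finite, and $W_1$-convergence does imply weak convergence, so \cref{p.Fatou} applies (your closing remark gets the implication between the two modes of uniform convergence the right way: compact sets are bounded, so your hypothesis is stronger than what \cref{p.Fatou} asks — note the parenthetical ``compact (hence on bounded)'' earlier in your write-up is stated backwards, though this is only a slip of phrasing). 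The paper instead argues directly: it fixes $\epsilon>0$, uses the tightness characterization (part~\eqref{i.Villani_tightness} of \cref{t.Villani}) to control $\int_{X\setminus B_R(x_0)}[1+\mathrm{d}(\mathord{\cdot},x_0)]\,d\mu_n$, splits $\int\phi_n\,d\mu_n-\int\phi\,d\mu$ into a far-field term, a near-field term handled by uniform convergence on bounded sets, and the fixed-integrand term $\int\phi\,d(\mu_n-\mu)$, which is handled by part~\eqref{i.Villani_family} of \cref{t.Villani}. Your approach buys economy by recycling \cref{p.Fatou} and the symmetric $\pm\phi_n$ trick, at the cost of invoking moment convergence to cancel the linear correction; the paper's argument is a self-contained $\epsilon$-splitting that obtains the two-sided limit in one pass and exercises the tightness criterion instead. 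Both are valid proofs of the statement.
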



\begin{proof}
Fix $\epsilon>0$.
By part \eqref{i.Villani_tightness} of \cref{t.Villani}, there exists $R>0$ such that, for all sufficiently large $n$,
\begin{equation}
\int_{X \setminus B_R(x_0)} [1+\mathrm{d}(x,x_0)]\, d\mu_n (x)  \le \epsilon \, . 
\end{equation}
Then, we write:
\begin{multline}
\int \phi_n \, d\mu_n - \int \phi \, d\mu 
=
\int (\phi_n-\phi) \, d\mu_n + \int \phi \, d(\mu_n-\mu) \\
=
\underbrace{\int_{X \setminus B_R(x_0)} (\phi_n-\phi) \, d\mu_n}_{\circled{1}} + \underbrace{\int_{B_R(x_0)} (\phi_n-\phi) \, d\mu_n}_{\circled{2}} + \underbrace{\int \phi \, d(\mu_n-\mu)}_{\circled{3}} \, .
\end{multline}
By part \eqref{i.Villani_family} of \cref{t.Villani}, the term $\circled{3}$ tends to $0$ as $n \to \infty$.
By the assumption of uniform convergence  on bounded sets,
$\left| \circled{2} \right| \le \sup_{B_{2R}(x_0)} |\phi_n - \phi|$ tends to $0$
as well.
Finally,
\begin{equation}
\left| \circled{1} \right| \le \int_{X \setminus B_{R}(x_0)} \left(|\phi_n| + |\phi| \right) \, d\mu_n  \le 2 C \int_{X \setminus B_R(x_0)} [1+\mathrm{d}(x,x_0)]\, d\mu_n (x) \le  2C \epsilon \, ,
\end{equation}
for all sufficiently large $n$.
Since $\epsilon>0$ is arbitrary, we conclude that $\int \phi_n \, d\mu_n \to \int \phi \, d\mu$, as claimed. 
\end{proof}

\medskip

Let us now consider the specific case of the metric space $(X,\mathrm{d}) = (\R_\plus, \mathrm{d}_\mathrm{HS})$, where: 
\begin{equation}\label{e.dist_HS}
\mathrm{d}_\mathrm{HS}(x,y) \coloneqq \left| \log(1+x) - \log(1+y) \right| \, .
\end{equation}
Then the finite-first-moment condition \eqref{e.1st_moment} becomes our usual integrability condition \eqref{e.HS_integrability}, so the $1$-Wasserstein space $\cP_1(X)$ becomes $\PHS$.

\subsection{Lower and upper semicontinuity}

The HS barycenter is definitely not continuous with respect to the weak topology, since the complement of $\PHS$ is dense in $\cP(\R_\plus)$, and the barycenter is $\infty$ there (by \cref{p.HS_infinity}). Nevertheless, lower semicontinuity holds, except in the critical configuration:

\begin{theorem}\label{t.LSC_weak}
For every $(\mu,c) \in \cP(\R_\plus) \times [0,1)$, we have:
\begin{equation}
\liminf_{(\tilde \mu, \tilde c) \to (\mu,c)} [\tilde \mu]_{\tilde c} \ge [\mu]_c \quad \text{unless} \quad \mu(0)=1-c \text{ and } [\mu]_c > 0 \, .
\end{equation}
\end{theorem}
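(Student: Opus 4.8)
The plan is to bound $[\tilde\mu]_{\tilde c}$ from below by producing, for each $y>0$, a lower bound on $\int K(x,y,\tilde c)\,d\tilde\mu(x)$ that survives the limit $(\tilde\mu,\tilde c)\to(\mu,c)$, and then taking the infimum over $y$. The natural tool is \cref{p.Fatou} applied on $X=\R_\plus$ (with the ordinary Euclidean metric, so weak convergence is all that is needed): for fixed $y$, the family of integrands $x\mapsto K(x,y,\tilde c)$ converges to $K(x,y,c)$ uniformly on compact subsets of $\R_\plus$ as $\tilde c\to c$ (by continuity of $K$ from \cref{p.kernel}\eqref{i.kernel_cont}, and since $c<1$ the value $-\infty$ is not attained near $(\,\cdot\,,y,c)$), and they are uniformly bounded from below: indeed $K(x,y,\tilde c)\ge K(0,y,\tilde c)=\log y+\tilde c^{-1}\log(1-\tilde c)$, which stays bounded below as long as $\tilde c$ ranges over a neighbourhood of $c<1$ (here one uses $t\log(1-t)\to 0$ as $t\to 0$ to handle $c=0$).

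**Main steps.** First, fix $y>0$ and a sequence $(\tilde\mu_j,\tilde c_j)\to(\mu,c)$ realizing the $\liminf$; apply \cref{p.Fatou} with $\phi_j(x)=K(x,y,\tilde c_j)$ and $\phi(x)=K(x,y,c)$ to get $\liminf_j \int K(x,y,\tilde c_j)\,d\tilde\mu_j \ge \int K(x,y,c)\,d\mu$. Second, since $\int K(x,y,\tilde c_j)\,d\tilde\mu_j \ge \inf_{z>0}\int K(x,z,\tilde c_j)\,d\tilde\mu_j = \log [\tilde\mu_j]_{\tilde c_j}$, conclude $\liminf_j \log[\tilde\mu_j]_{\tilde c_j} \ge \int K(x,y,c)\,d\mu$. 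Third, this holds for every $y>0$, so taking the supremum over $y$ — equivalently the infimum of the integral — gives $\liminf_j \log[\tilde\mu_j]_{\tilde c_j}\ge \inf_{y>0}\int K(x,y,c)\,d\mu = \log[\mu]_c$, and exponentiating finishes it. (When $c=1$ is excluded by hypothesis, so we never face the $-\infty$ issue in the kernel; the statement is only for $c\in[0,1)$.)

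**The obstacle.** The catch, and the reason for the exception clause, is the lower-bound hypothesis in \cref{p.Fatou}: we need $K(x,y,\tilde c)\ge -C$ with $C$ \emph{independent of $\tilde c$} near $c$. The bound $K(0,y,\tilde c)=\log y+\tilde c^{-1}\log(1-\tilde c)$ is fine when $c<1$. But this only gives, after optimizing, a lower bound — it does \emph{not} by itself prevent $[\tilde\mu_j]_{\tilde c_j}$ from dipping below $[\mu]_c$ in the critical-with-positive-value situation, because there the optimal $y=\eta(\mu,c)$ for $\mu$ need not be optimal (or even near-optimal uniformly) for nearby $\tilde\mu_j$; the infimum over $y$ in \eqref{e.def_HS} can fail to be attained in the limit (cf.\ \cref{p.critical_supercritical}, \cref{ex.Bernoulli}), so semicontinuity genuinely breaks there. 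The argument above, though, never uses attainment: it only lower-bounds the infimum, and the exception in the statement is exactly the configuration where a matching \emph{upper} bound would be needed but fails. So I expect no real obstacle to the lower-semicontinuity claim as stated — the work is entirely in verifying the uniform lower bound on the kernel across a neighbourhood of $c$, and in checking the $c=0$ edge case where $K(x,y,0)=\log y+y^{-1}x-1$ is the uniform limit (uniformly on compacts in $x$) of $K(x,y,\tilde c)$ as $\tilde c\to 0^+$, which is a short computation from \eqref{e.def_kernel}.
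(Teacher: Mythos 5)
There is a genuine gap, and it sits exactly at your second step. For a fixed $y>0$, \cref{p.Fatou} gives
\begin{equation}
\liminf_{j\to\infty} \int K(x,y,\tilde c_j)\, d\tilde\mu_j \;\ge\; \int K(x,y,c)\, d\mu \, ,
\end{equation}
but the quantity you want to bound is $\log[\tilde\mu_j]_{\tilde c_j}=\inf_{z>0}\int K(x,z,\tilde c_j)\,d\tilde\mu_j$, which satisfies $\int K(x,y,\tilde c_j)\,d\tilde\mu_j \ge \log[\tilde\mu_j]_{\tilde c_j}$ — the inequality points the \emph{wrong way}. From ``$A_j \ge B_j$ and $\liminf A_j \ge L$'' you cannot conclude $\liminf B_j \ge L$; an infimum over $y$ of functionals that are each lower semicontinuous in $(\tilde\mu,\tilde c)$ need not be lower semicontinuous, because the (near-)minimizing $y$ for $\tilde\mu_j$ may escape to $0$ or $\infty$ as $j\to\infty$. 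That this really happens is shown by the exceptional case itself: take $\mu=(1-c)\delta_0+c\delta_1$ and $\tilde\mu_j=\mu$, $\tilde c_j=c+1/j$. Then $\tilde\mu_j(0)>1-\tilde c_j$, so $[\tilde\mu_j]_{\tilde c_j}=0$ by \cref{p.critical_supercritical}, while for each fixed $y$ the integrals $\int K(x,y,\tilde c_j)\,d\mu$ converge to the finite value $\int K(x,y,c)\,d\mu$. Your argument never uses the hypothesis ``$\mu(0)\ne 1-c$ or $[\mu]_c=0$'' and would therefore prove the inequality in the exceptional configuration as well, contradicting this example; so the exception clause is not about a missing \emph{upper} bound, as your closing paragraph suggests, but precisely the locus where the claimed lower-semicontinuity inequality fails.

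The missing idea is control of where the infimum in \eqref{e.def_HS} is (nearly) attained. The paper first disposes of the trivial cases $[\mu]_c=0$ and $\mu(0)>1-c$, and in the remaining subcritical case $\mu(0)<1-c$ it shows (via Portmanteau and the monotonicity of $\psi$ from \cref{p.subcritical}) that the minimizers $y_n=\eta(\mu_n,c_n)$ exist for large $n$ and converge to $\eta(\mu,c)\in(0,\infty)$; only then is \cref{p.Fatou} applied, to the \emph{varying} integrands $K(\cdot,y_n,c_n)$, which are uniformly bounded below because $\inf_n y_n>0$ and $\sup_n c_n<1$. This yields $\liminf_n \int K(x,y_n,c_n)\,d\mu_n \ge \int K(x,\eta(\mu,c),c)\,d\mu = \log[\mu]_c$, where now the left-hand integrals \emph{equal} $\log[\mu_n]_{c_n}$ by \eqref{e.sub_conclusion}. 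The case $c=0$ requires an additional truncation argument since the analogue of the uniform bound \eqref{e.cloudy} fails. None of this is optional bookkeeping: without locating the minimizers, the pointwise-in-$y$ Fatou estimate cannot reach the infimum.
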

To be explicit, the inequality above means that for every $\lambda < [\mu]_c$, there exists a neighborhood $\cU \subseteq \cP(\R_\plus) \times [0,1)$ of $(\mu,c)$ with respect to the product topology (weak $\times$ standard) such that $[\tilde \mu]_{\tilde c} > \lambda$ for all $(\tilde \mu, \tilde c) \in \cU$.

\begin{proof}
Let us first note that:
\begin{equation}\label{e.US_discont}
\mu(0)=1-c \text{ and } [\mu]_c > 0 \quad \Rightarrow \quad 
\liminf_{(\tilde \mu, \tilde c) \to (\mu,c)} [\tilde \mu]_{\tilde c} < [\mu]_c \, .
\end{equation}
Indeed, if $c_n \coloneqq c +1/n$, then  $(\mu, c_n) \to (\mu, c)$. By \cref{p.critical_supercritical}, we have
that $[\mu]_{c_n}=0$. Thus,
\begin{equation}
 0= \lim_{n \to \infty} [\mu]_{c_n} =\liminf_{(\tilde \mu, \tilde c) \to (\mu,c)} [\tilde \mu]_{\tilde c} < [\mu]_c \, . 
 \end{equation}
 
We now consider the converse implication: given $(\mu,c) \in \cP(\R_\plus) \times [0,1)$ such that $\mu(0) \neq 1-c$ or $[\mu]_c = 0$, we want to show that $\liminf_{(\tilde \mu, \tilde c) \to (\mu,c)} [\tilde \mu]_{\tilde c} \ge [\mu]_c$.
There are some trivial cases:
\begin{itemize}
\item If $[\mu]_c=0$, then the conclusion is obvious.  
\item If $\mu(0)> 1-c$, then $[\mu]_c=0$ by  \cref{p.critical_supercritical}, and again the conclusion is clear.
\end{itemize}
In what follows, we assume that $\mu(0)<1-c$ and $[\mu]_c>0$.
Fix a sequence $(\mu_n,c_n)$ converging to $(\mu,c)$.
We need to prove that:
\begin{equation}\label{e.aim_for_today}
\liminf_{n \to \infty} [\mu_n]_{c_n} \ge [\mu]_c \, .
\end{equation}
We may also assume without loss of generality that $[\mu_n]_{c_n}<\infty$ for each $n$.
We divide the proof in two cases:

\smallskip
\noindent\textbf{Case $c>0$:} 
We can also assume that $c_n >0$ for every $n$.
By \cref{p.HS_infinity}, the hypothesis  $[\mu_n]_{c_n}<\infty$ means that $\mu_n \in \PHS$.
By Portmanteau's Theorem \cite[Theorem 6.1(c)]{Partha_book}, $\limsup_{n \to \infty} \mu_n(0) \leq \mu(0) < 1-c$. Thus, for sufficiently large values of $n$ we have $\mu_n(0) < 1-c_n$.  
In this setting, the HS barycenter $[\mu_n]_{c_n}$ can be computed by \cref{p.subcritical}. 
Recall from \eqref{e.eta} that $\eta(\tilde{\mu},\tilde{c})$ denotes the unique positive solution of the equation 
$\int \frac{x}{\tilde{c}x + (1-\tilde{c}) \eta} \, d\tilde{\mu} = 1$. 
Note that $\eta(\mu,c)$ is well defined even in the case  $[\mu]_{c}= \infty$ (see \cref{r.eta_other_cases}). 
We claim that:
\begin{equation}\label{e.claim}
\lim_{n \to \infty} \eta(\mu_n, c_n) = \eta(\mu,c) \, .
\end{equation}

\begin{proof}[Proof of the claim]
Fix numbers $\alpha_0$, $\alpha_1$ with $0< \alpha_0 < \eta(\mu,c) < \alpha_1$.
Then a monotonicity property shown in the proof of \cref{p.subcritical} gives:
\begin{equation}\label{e.cansado1}
\int \frac{x}{cx + (1-c) \alpha_0} \, d\mu > 1 > \int \frac{x}{cx + (1-c) \alpha_1} \, d\mu \, .
\end{equation}
Note the uniform bounds:
\begin{equation}\label{e.cloudy}
0 \le \frac{x}{c_n x + (1-c_n) \alpha_i} \le \left(\inf_n c_n\right)^{-1}  \, .
\end{equation}
So, using \cref{c.conv_int_weak}, we see that $\int \frac{x}{c_n x + (1-c_n) \alpha_i} \, d\mu_n  \to \int \frac{x}{cx + (1-c) \alpha_i} \, d\mu$.
In particular, for all sufficiently large $n$,
\begin{equation}\label{e.cansado2}
\int \frac{x}{c_n x + (1-c_n) \alpha_0} \, d\mu_n > 1 > \int \frac{x}{c_n x + (1-c_n) \alpha_1} \, d\mu_n \, ,
\end{equation}
and thus $\alpha_0 < \eta(\mu_n,c_n) < \alpha_1$, proving the claim \eqref{e.claim}.
\end{proof}

For simplicity, write $y_n \coloneqq \eta(\mu_n, c_n)$.
By \cref{p.kernel}.\eqref{i.kernel_mono_x}, 
\begin{equation}\label{e.coffee}
K(x, y_n, c_n) \ge K(0, y_n, c_n) = \log y_n + c_n^{-1} \log(1-c_n) \ge - C
\end{equation}
for some finite $C$, since $\sup_n c_n < 1$ and $\inf_n y_n > 0$.
This allows us to apply \cref{p.Fatou} and obtain:
\begin{equation}\label{e.more_coffee}
\liminf_{n \to \infty} \int K(x, y_n, c_n) \, d\mu_n \ge \int K(x, y_\infty, c) \, d\mu \, ,
\end{equation}
where $y_\infty \coloneqq \lim_{n\to \infty} y_n = \eta(\mu,c)$.
Using formula \eqref{e.sub_conclusion}, we obtain \eqref{e.aim_for_today}.
This completes the proof of \cref{t.LSC_weak} in the case $c>0$.

\smallskip
\noindent\textbf{Case $c=0$:}
By \cref{p.Fatou} we obtain, $\liminf \int x \, d\mu_n \ge \liminf \int x \, d\mu$, that is, $\liminf [\mu_n]_0 \ge [\mu]_c$. So we can assume that $c_n >0$ for every $n$, like in the previous case.

In order to prove \eqref{e.aim_for_today} in the case $c=0$, let us fix an arbitrary positive $\lambda < [\mu]_0 =  \int  x  \, d \mu$, and let us show that $[\mu_n]_{c_n} > \lambda$ for every sufficiently large $n$.
By the monotone convergence theorem, there exists $m \in \N$ such that $\int \min(x,m) \, d\mu(x) > \lambda$. Let $\hat{\mu}$ (resp.\ $\hat{\mu}_n$) be the push-forward of the measure $\hat{\mu}$ (resp.\ $\mu_n$) by the map $x \mapsto \min(x,m)$. Then $[\hat{\mu}]_0 > \lambda$ and, by \cref{p.basic}.\eqref{i.basic_mono_mu}, $[\hat{\mu}_n]_{c_n} \le [\mu_n]_{c_n}$.
Furthermore, we have $\hat\mu_n \rightharpoonup \hat\mu$, since for every $f \in C_\mathrm{b}(\R_\plus)$,
\begin{equation}
\int f \, d\hat{\mu}_n = 
\int f(\min(x,m)) \, d\mu_n(x) \to
\int f(\min(x,m)) \, d\mu(x) = 
\int f \, d\hat{\mu} \, .
\end{equation}
So, to simplify the notations, we remove the hats and assume that the measures $\mu_n$, $\mu$ are all supported in the interval $[0,m]$.

The numbers $\eta(\mu_n,c_n)$ and $\eta(\mu,c)$ are well-defined, as in the previous case, and furthermore they belong to the interval $[0,m]$, by \cref{l.internality_eta}.
On the other hand, by \cref{p.Fatou},
\begin{equation}
\liminf_{n \to \infty} \int \frac{x}{c_n x + (1-c_n) \lambda} \, d\mu_n \ge \int \frac{x}{\lambda} \, d\mu > 1 \, .
\end{equation}
It follows that $\eta(\mu_n,c_n) > \lambda$ for all sufficiently large $n$.
We claim that $\eta(\mu_n,c_n) \to \eta(\mu,0)$, as before in \eqref{e.claim}.
The proof is the same, except that the upper bound in \eqref{e.cloudy} becomes infinite and must be replaced by the following estimate:
\begin{equation}
\left.
\begin{array}{l}
0 \le x \le m  \\
\alpha_i \ge \lambda 
\end{array}
\right\}
\quad 
\Rightarrow
\quad
0 \le \frac{x}{c_n x + (1-c_n) \alpha_i} \le  \frac{m}{c_n m + (1-c_n) \alpha_i} \le \frac{m}{\lambda} \, .
\end{equation}
So a repetition of the previous arguments yields \eqref{e.claim}, then \eqref{e.coffee} and \eqref{e.more_coffee}, and finally \eqref{e.aim_for_today}. 
Therefore, \cref{t.LSC_weak} has been proved in both cases $c>0$ and $c=0$.
\end{proof}

Next, let us  investigate the behaviour of the HS barycenter on the product space $\PHS \times [0,1]$, where $\PHS$ is endowed with the topology defined in the end of \cref{sec:spaces_measures}.

\begin{theorem}\label{p.cont_PHS}
For every $(\mu,c) \in \PHS \times [0,1]$, we have:
\begin{alignat}{5}
\label{e.USC}
\limsup_{(\tilde \mu, \tilde c) \to (\mu,c)} [\tilde \mu]_{\tilde c} &\le [\mu]_c &\quad &\text{unless} &\quad c&=0 &\text{ and } & [\mu]_0 &< \infty \, , \\ 
\label{e.LSC}
\liminf_{(\tilde \mu, \tilde c) \to (\mu,c)} [\tilde \mu]_{\tilde c} &\ge [\mu]_c &\quad &\text{unless} &\quad \mu(0)&=1-c &\text{ and } & [\mu]_c &> 0 \, . 
\end{alignat}
\end{theorem}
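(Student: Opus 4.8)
The claim is the conjunction of the upper bound \eqref{e.USC} and the lower bound \eqref{e.LSC}, and since $\PHS \times [0,1]$ is metrizable I will check each of them along sequences $(\mu_n, c_n) \to (\mu,c)$. The plan is to reduce \eqref{e.LSC} to \cref{t.LSC_weak}. When $c < 1$ this is immediate: the Kantorovich topology on $\PHS$ is finer than the weak topology, so a weak neighbourhood of $(\mu,c)$ on which $[\tilde\mu]_{\tilde c} > \lambda$, furnished by \cref{t.LSC_weak}, contains a basic box $V \times J$ with $J$ an interval around $c$ lying in $[0,1)$; then $(V \cap \PHS) \times J$ is a neighbourhood of $(\mu,c)$ in $\PHS \times [0,1]$ on which $[\tilde\mu]_{\tilde c} > \lambda$, and since the exceptional configuration is the same in both statements, \eqref{e.LSC} follows. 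When $c = 1$ there is nothing new: if $\mu(0) > 0$ then $\int \log x \, d\mu = -\infty$, hence $[\mu]_1 = 0$ by \eqref{e.HS1} and \eqref{e.LSC} is trivial, while if $\mu(0) = 0$ then either $[\mu]_1 = 0$ (again trivial) or $[\mu]_1 > 0$, which is the excepted case.

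The real content is the upper bound \eqref{e.USC}, which I will prove for $c \in (0,1]$ (for $c = 0$ nothing is claimed, since either $[\mu]_0 = \infty$ and \eqref{e.USC} is trivial, or $[\mu]_0 < \infty$ and we are in the excepted case). Here the plan is to use the alternative formula \eqref{e.oldstyle}. Given $\lambda > [\mu]_c$ — with $[\mu]_c < \infty$ by \cref{p.HS_infinity} — I first use $B(c) > 0$ and the definition of the infimum in \eqref{e.oldstyle} to fix a single $r_0 > 0$ with
\begin{equation*}
B(c)\, r_0^{-\frac{1-c}{c}} \exp\!\left[ c^{-1}\!\int \log(x+r_0)\, d\mu \right] < \lambda \, .
\end{equation*}
Then, for any sequence $(\mu_n, c_n) \to (\mu,c)$ — with $c_n > 0$ eventually, since $c > 0$ — evaluating the infimum in \eqref{e.oldstyle} at $r = r_0$ gives
\begin{equation*}
[\mu_n]_{c_n} \le B(c_n)\, r_0^{-\frac{1-c_n}{c_n}} \exp\!\left[ c_n^{-1}\!\int \log(x+r_0)\, d\mu_n \right] ,
\end{equation*}
and it remains to pass to the limit on the right. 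The convergences $B(c_n) \to B(c)$, $r_0^{-(1-c_n)/c_n} \to r_0^{-(1-c)/c}$ and $c_n^{-1} \to c^{-1}$ are mere continuity of elementary functions on $(0,1]$; the one substantive input is $\int \log(x+r_0)\, d\mu_n \to \int \log(x+r_0)\, d\mu$, which I will obtain from part \eqref{i.Villani_family} of \cref{t.Villani} after checking the routine domination $|\log(x+r_0)| \le C_{r_0}\big(1 + \log(1+x)\big) = C_{r_0}\big(1 + \mathrm{d}_\mathrm{HS}(x,0)\big)$ for a suitable constant $C_{r_0}$. This yields $\limsup_n [\mu_n]_{c_n} \le B(c)\, r_0^{-(1-c)/c} \exp[c^{-1}\int \log(x+r_0)\, d\mu] < \lambda$, and since $\lambda > [\mu]_c$ was arbitrary, \eqref{e.USC} follows.

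I do not expect a serious obstacle here: the argument is semicontinuity bookkeeping built on the already-established \cref{t.LSC_weak} and the Wasserstein-convergence criterion \cref{t.Villani}. The point needing the most care in the write-up is that when $c = 1$ the infimum in \eqref{e.oldstyle} is approached only as $r \to 0^+$, so no $r_0 > 0$ is optimal; this is harmless, because \eqref{e.USC} only asks for \emph{some} $r_0 > 0$ making the displayed bound smaller than the given $\lambda$, and $r \mapsto \exp\int \log(x+r)\,d\mu$ decreases to $[\mu]_1$ as $r \downarrow 0$ by monotone convergence. Finally, the exception recorded in \eqref{e.USC} at $c = 0$, $[\mu]_0 < \infty$ is genuine, and it is explained by exactly the domination estimate above breaking down: $\mathrm{d}_\mathrm{HS}$ controls $\int \log(1+x)\,d\mu$ but not $\int x \, d\mu = [\mu]_0$, so $\mu \mapsto [\mu]_0$ fails to be continuous there (cf.\ the measures $(1-1/n^2)\delta_0 + (1/n^2)\delta_{e^n} \to \delta_0$, along which $[\mu_n]_0 \to \infty$).
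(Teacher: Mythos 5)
Your proof is correct, and it splits the work the same way the paper does, with one genuine difference in the upper-bound half. For \eqref{e.LSC} your argument coincides with the paper's: for $c<1$ you transfer \cref{t.LSC_weak} through the continuous inclusion $\PHS \hookrightarrow \cP(\R_\plus)$ (the exceptional configurations match), and for $c=1$ every non-excepted case has $[\mu]_1=0$, so there is nothing to prove. For \eqref{e.USC} the paper fixes a near-optimal $y_0$ in the kernel formula \eqref{e.def_HS} and must then integrate the \emph{varying} functions $K(\mathord{\cdot},y_0,c_n)$ against $\mu_n$, which is exactly what \cref{l.conv_int_Wass} (uniform bound by a multiple of $1+\log(1+x)$ plus uniform convergence on bounded sets) is invoked for; you instead fix a near-optimal $r_0$ in the old-style formula \eqref{e.oldstyle}, which isolates the entire $c$-dependence in the elementary prefactors $B(c_n)$, $r_0^{-(1-c_n)/c_n}$, $c_n^{-1}$, so the only measure-theoretic input is the convergence of the single fixed test function $\log(x+r_0)$, handled directly by part~\eqref{i.Villani_family} of \cref{t.Villani} via your (correct) domination $|\log(x+r_0)| \le C_{r_0}(1+\log(1+x))$. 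What this buys is a slightly leaner argument that bypasses \cref{l.conv_int_Wass} for this half and treats $c \in (0,1]$, including $c=1$ (where your monotone-convergence remark about $r \downarrow 0$ is the right justification that some $r_0$ beats any $\lambda > [\mu]_1$), in a uniform way; it costs nothing, since \cref{p.HS_infinity} indeed guarantees $[\mu]_c < \infty$ there and the prefactors are continuous on $(0,1]$. One remark: the paper's proof additionally shows that the excepted configurations are genuine (via the perturbations $\frac{n-1}{n}\mu + \frac{1}{n}\delta_n$ and $\frac{n-1}{n}\mu + \frac{1}{n}\delta_0$); the statement as phrased with ``unless'' does not require this, and your single example at $(\delta_0,0)$ only illustrates one instance of it, so this is a difference in scope, not a gap.
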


\begin{proof}[Proof of part \eqref{e.USC} of \cref{p.cont_PHS}]
Let us start by proving the following implication:
\begin{equation}\label{e.US_discont_again}
\mu \in \PHS, \ 
c=0, \text{ and } [\mu]_0 < \infty \quad \Rightarrow \quad 
\limsup_{(\tilde \mu, \tilde c) \to (\mu,c)} [\tilde \mu]_{\tilde c} >  [\mu]_c \, .
\end{equation}
Consider measures $\mu_n \coloneqq \frac{n-1}{n} \, \mu + \frac{1}{n} \, \delta_n$.
Clearly, $\mu_n \rightharpoonup \mu$; moreover,
\begin{align}
\int \log(1+x) \, d(\mu-\mu_n)(x) = \frac{\log(1+n)}{n} \to 0 \, .
\end{align}
So using characterization \eqref{i.Villani_one_more} of \cref{t.Villani}, we conclude that $\mu_n \to \mu$ in the topology of $\PHS$. On the other hand, $[\mu_n]_0 = \frac{n-1}{n} \, [\mu]_0 + 1 \to [\mu]_0 + 1$.
This proves \eqref{e.US_discont_again}.

Next, let us prove the converse implication. So, let us fix $(\mu,c)$ such that $c\neq 0$ or $[\mu]_0 = \infty$, and let us show that if $(\mu_n, c_n)$ is any sequence in $\PHS \times [0,1]$ converging to $(\mu,c)$, then $\limsup_{n \to \infty} [\mu_n]_{c_n} \le [\mu]_c$. 
This is obviously true if $[\mu]_c = \infty$, so let us assume that $[\mu]_c < \infty$.
Then our assumption becomes $c>0$, so by removing finitely many terms from the sequence  $(\mu_n, c_n)$, we may assume that $\inf_n c_n > 0$.
Fix some finite number $\lambda > [\mu]_c$.
By \cref{def.HS}, there is some $y_0>0$ such that $\int K(x,y_0,c) \, d\mu(x) < \log \lambda$.
The sequence of continuous functions 
$\frac{|K(x,y_0,c_n)|}{1+\log(1+x)}$ is uniformly bounded, 
as a direct calculation shows.
Furthermore, $K(\mathord{\cdot},y_0, c_n) \to K(\mathord{\cdot},y_0,c) $ uniformly on compact subsets of $\R_\plus$.
So \cref{l.conv_int_Wass} ensures that $\int K(x,y_0,c_n) \, d\mu_n(x) \to \int  K(x,y_0,c) \, d\mu(x)$.
Now it follows from \cref{def.HS} that $[\mu_n]_{c_n} < \lambda$ for all sufficiently large $n$.
Since $\lambda>[\mu]_c$ is arbitrary, we conclude that $\limsup_{n \to \infty} [\mu_n]_{c_n} \le [\mu]_c$, as we wanted to show. 
\end{proof}

\begin{proof}[Proof of part \eqref{e.LSC} of \cref{p.cont_PHS}]
First, we prove that, for all $(\mu,c) \in \PHS \times [0,1]$, 
\begin{equation}\label{e.LS_discont}
\mu(0)=1-c, \text{ and } [\mu]_c > 0 \quad \Rightarrow \quad 
\liminf_{(\tilde \mu, \tilde c) \to (\mu,c)} [\tilde \mu]_{\tilde c} < [\mu]_c \, .
\end{equation}
Consider measures $\mu_n \coloneqq \frac{n-1}{n} \, \mu + \frac{1}{n} \, \delta_0$.
Clearly, $\mu_n \to \mu$ in the topology of $\PHS$. 
By \cref{p.critical_supercritical}.\eqref{i.supercritical},  $[\mu_n]_c=0$. 
This proves \eqref{e.LS_discont}. For $c \in [0,1)$ the converse is a direct consequence of \cref{t.LSC_weak}, since the topology on $\PHS$ is stronger. If $c=1$ and $[\mu]_1=0$ then  the result is obvious.
If $[\mu]_1>0$ then, as the example above shows, the result does not hold.
\end{proof}

\subsection{Continuity for extremal values of the parameter}

\cref{p.cont_PHS} shows that the HS barycenter map on $\PHS \times [0,1]$ is not continuous at the pairs $(\mu,0)$ (except if $[\mu]_0 = \infty$), nor at the pairs $(\mu,1)$ (except if $[\mu]_1 = 0$). Let us see that continuity can be ``recovered'' if we impose extra integrablity conditions and work with stronger topologies.

If we use the standard distance $\mathrm{d}(x,y) \coloneqq |x-y|$ on the half-line $X = \R_\plus = [0,+\infty)$, then the resulting $1$-Wasserstein space is denoted $\Pa$.
On the other hand, using the distance 
\begin{equation}\label{e.dist_g}
\mathrm{d}_\mathrm{g}(x,y) \coloneqq \left| \log x - \log y \right| 
\quad \text{on the open half-line } X = \R_{\plus\plus} \, , 
\end{equation}
the corresponding $1$-Wasserstein space will be denoted $\Pg$.
We consider the latter space as a subset of $\cP(\R_\plus)$, since any measure $\mu$ on $\R_{\plus\plus}$ can be extended to $\R_\plus$ by setting $\mu(0) = 0$.
The topologies we have just defined on the spaces $\Pa$ and $\Pg$ are stronger than the topologies induced by $\PHS$; in other words, the inclusion maps below are continuous:
\begin{equation}\label{e.inclusions}
\begin{tikzcd}[row sep=tiny, column sep=small]
\Pa \arrow[rd,hook]	\\
& \PHS \arrow[r,hook]	& \cP(\R_\plus) \\ 
\Pg \arrow[ru,hook]
\end{tikzcd}
\end{equation}

Note that the ``arithmetic barycenter'' $[\mathord{\cdot}]_0$ is finite on $\Pa$, while the ``geometric barycenter'' $[\mathord{\cdot}]_1$ is finite and non-zero on $\Pg$. 

\medskip

Finally, let us establish continuity of the HS barycenter for the extremal values of the parameter with respect to these new topologies:

\begin{proposition}\label{p.cont_extremal}
Consider a sequence $(\mu_n,c_n)$ in $\PHS \times [0,1]$.
\begin{enumerate}
\item\label{i.cont_extremal_0} 
If $(\mu_n, c_n) \to (\mu,0)$ in $\Pa \times [0,1)$, then $[\mu_n]_{c_n} \to [\mu]_0$.
\item\label{i.cont_extremal_1}
If $(\mu_n, c_n) \to (\mu,1)$ in $\cP_\mathrm{g}(\R_\plus) \times (0,1]$, then $[\mu_n]_{c_n} \to [\mu]_1$.
\end{enumerate}
\end{proposition}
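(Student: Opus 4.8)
The plan is to treat the two extremal parameters separately but with a common strategy: in each case, find a single test function $y \mapsto K(x,y,c_n)$ (or a comparably simple integrand) whose integral against $\mu_n$ both bounds $[\mu_n]_{c_n}$ from above and can be forced to converge, using the appropriate Wasserstein characterization from \cref{t.Villani}; then combine with the relevant semicontinuity statement from \cref{p.cont_PHS} or \cref{t.LSC_weak} to pin down the limit. The point of the stronger topologies on $\Pa$ and $\Pg$ is precisely to kill the discontinuities identified in \cref{p.cont_PHS}: on $\Pa$ the ``escaping mass to infinity'' example $\mu_n = \frac{n-1}{n}\mu + \frac1n \delta_n$ used to prove \eqref{e.US_discont_again} is ruled out because $\int x \, d\mu_n \to \int x\,d\mu$ fails for that sequence; dually, on $\Pg$ the ``mass collapsing to $0$'' example is ruled out because $\int \log x \, d\mu_n \to \int \log x\,d\mu$ fails.

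For part \eqref{i.cont_extremal_0}: lower semicontinuity $\liminf_n [\mu_n]_{c_n} \ge [\mu]_0$ is immediate from \cref{t.LSC_weak} (the exceptional case there requires $\mu(0)=1-c=1$, i.e. $\mu=\delta_0$, for which $[\mu]_0 = 0$ and the inequality is trivial anyway), since convergence in $\Pa$ implies weak convergence. For the upper bound, I would use the alternative formula coming from $c=0$: for each fixed $y>0$, $\int K(x,y,c_n)\,d\mu_n \ge \log[\mu_n]_{c_n}$, and as $n\to\infty$ (with $c_n \to 0$) the integrand $K(\mathord\cdot,y,c_n)$ converges uniformly on bounded sets to $K(\mathord\cdot,y,0) = \log y + y^{-1}x - 1$, which has linear growth. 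Since $\frac{|K(x,y,c_n)|}{1+x}$ is uniformly bounded for $x$ in $\R_\plus$ (a direct calculation using $\log(cy^{-1}x+1-c) \le cy^{-1}x$ and $c_n^{-1}\log(1-c_n)$ bounded below only near $c_n\to 0$ — here one must be slightly careful, but $c^{-1}\log(cy^{-1}x+1-c) \le y^{-1}x$ for all $c\in(0,1]$ gives the clean uniform upper bound, and the lower bound $K(x,y,c_n)\ge K(0,y,c_n) = \log y + c_n^{-1}\log(1-c_n) \to \log y - \infty$ is the delicate point), I would instead split off the constant: write $K(x,y,c_n) = [\log y + c_n^{-1}\log(1-c_n)] + c_n^{-1}\log(1 + \frac{c_n}{(1-c_n)y}x)$ and note the second term lies in $[0, y^{-1}x]$, so it is dominated by $\frac{|x|}{1+x}\cdot(1+x)$-type bounds and \cref{l.conv_int_Wass} applies directly to it, while the additive constant $c_n^{-1}\log(1-c_n)\to -1$ converges. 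Thus $\int K(x,y,c_n)\,d\mu_n \to \int K(x,y,0)\,d\mu$ for each fixed $y$, giving $\limsup_n \log[\mu_n]_{c_n} \le \int K(x,y,0)\,d\mu$; taking the infimum over $y>0$ yields $\limsup_n [\mu_n]_{c_n} \le [\mu]_0$.

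For part \eqref{i.cont_extremal_1}: now lower semicontinuity is the problematic direction (it is the ``$c=1$, $[\mu]_1>0$'' exception in \cref{p.cont_PHS}), so I would argue it by hand using the $\Pg$ topology, while upper semicontinuity comes free from \cref{p.cont_PHS}\eqref{e.USC} (the exception there is $c=0$). For the lower bound, use formula \eqref{e.oldstyle}: $[\mu_n]_{c_n} = B(c_n)\inf_{r>0}\{r^{-(1-c_n)/c_n}\exp(c_n^{-1}\int\log(x+r)\,d\mu_n)\}$, with $B(c_n)\to B(1)=1$. On $\Pg$ the measures are supported on $\R_{\plus\plus}$ and $\int|\log x|\,d\mu_n$ converges, so one expects the minimizing $r = \rho(\mu_n,c_n)$ from equation \eqref{e.rho}, $\int \frac{x}{x+\rho}\,d\mu_n = c_n$, to tend to $0$ as $c_n\to 1$; I would show $\rho(\mu_n,c_n)\to 0$ using a tightness argument (the $\Pg$-tightness condition from \cref{t.Villani}\eqref{i.Villani_tightness} controls mass near $0$ and near $\infty$ in the $\mathrm d_\mathrm g$-metric uniformly in $n$), then estimate $r^{-(1-c_n)/c_n}\exp(c_n^{-1}\int\log(x+r)\,d\mu_n)$ at $r=\rho(\mu_n,c_n)$ and show it converges to $\exp\int\log x\,d\mu = [\mu]_1$. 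The cleanest route is probably to bound the whole infimum below: for any fixed small $r>0$, $\int\log(x+r)\,d\mu_n \ge \int\log x\,d\mu_n$, and $r^{-(1-c_n)/c_n}\to 1$ since $(1-c_n)/c_n\to 0$ — but that only handles a fixed $r$, not the infimum; so one genuinely needs to locate the minimizer near $0$.

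The main obstacle I anticipate is precisely this last point in part \eqref{i.cont_extremal_1}: controlling the minimizing $r=\rho(\mu_n,c_n)$ uniformly as $c_n\to 1$ and showing it does not escape in a way that spoils the estimate. This requires genuinely using the strength of the $\Pg$-topology (uniform integrability of $\log x$ near $0$ and $\infty$ in the $\mathrm d_\mathrm g$-sense), not merely weak convergence — which is exactly why the proposition is stated with $\Pg$ rather than $\PHS$. By contrast, part \eqref{i.cont_extremal_0} should go through fairly mechanically once the integrand $K(\mathord\cdot,y,c_n)$ is handled via the decomposition above and \cref{l.conv_int_Wass}.
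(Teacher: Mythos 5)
Your part~\eqref{i.cont_extremal_0} is correct and is essentially the paper's own argument: the lower bound comes from \cref{t.LSC_weak} (the exceptional case there would force $\mu=\delta_0$, where the inequality is trivial), and the upper bound by fixing $y>0$ and passing $\int K(x,y,c_n)\,d\mu_n \to \int K(x,y,0)\,d\mu$ via \cref{l.conv_int_Wass} in the $\Pa$-topology; your splitting of $K(x,y,c_n)$ into the constant $\log y + c_n^{-1}\log(1-c_n)\to \log y -1$ plus a term lying in $[0,y^{-1}x]$ is just an explicit verification of the uniform bound on $|K(x,y,c_n)|/(1+x)$ that the paper asserts directly.

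In part~\eqref{i.cont_extremal_1}, however, there is a genuine gap, and it sits exactly where you flag it: the inequality $\liminf_n [\mu_n]_{c_n}\ge[\mu]_1$ is never actually proved. You reduce it to locating the minimizer $\rho_n=\rho(\mu_n,c_n)$ of \eqref{e.oldstyle} and controlling it as $c_n\to 1$, but the ``tightness argument'' is only named, not carried out, and the estimate of the value at $\rho_n$ is likewise left open; as written, this direction is an announced plan rather than a proof. The paper bypasses the minimizer entirely with a one-line observation you missed: by monotonicity of the barycenter in the parameter (\cref{p.basic}.\eqref{i.basic_mono_c}), $[\mu_n]_{c_n}\ge[\mu_n]_1=\exp\int\log x\,d\mu_n$, and $\log x$ is an admissible test function for the $\Pg$-topology (with basepoint $x_0=1$ one has $|\log x|\le 1+\mathrm{d}_\mathrm{g}(x,1)$), so the right-hand side converges to $[\mu]_1$; combined with the upper bound from \eqref{e.USC} of \cref{p.cont_PHS}, which you invoke correctly, this finishes the proof. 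For the record, your route can be completed, and more cheaply than you anticipate: since $\mu_n\in\Pg$ you are automatically subcritical, and you do not need $\rho_n\to 0$, only that $(\rho_n)$ stays bounded. Indeed, if $\rho_n\to\infty$ along a subsequence, then for a fixed large $R$ eventually $c_n=\int \frac{x}{x+\rho_n}\,d\mu_n\le\int \frac{x}{x+R}\,d\mu_n\to\int \frac{x}{x+R}\,d\mu<1$, contradicting $c_n\to 1$; and if $\rho_n\le M$, then $\rho_n^{-(1-c_n)/c_n}\ge M^{-(1-c_n)/c_n}\to 1$ and $\log(x+\rho_n)\ge\log x$ give $[\mu_n]_{c_n}\ge B(c_n)\,M^{-(1-c_n)/c_n}\exp\bigl(c_n^{-1}\int\log x\,d\mu_n\bigr)\to[\mu]_1$. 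Still, the monotonicity trick makes all of this unnecessary.
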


\begin{proof}[Proof of part \eqref{i.cont_extremal_0} of \cref{p.cont_extremal} ]
Note that if $c_n=0$ for every $n \in \N$, the result is direct from the definition of the topology in $\Pa$ (use characterization \eqref{i.Villani_one_more} of \cref{t.Villani}). We assume now that $c_n>0$, for every $n \in \N$. It is a consequence of \cref{t.LSC_weak} that:
\begin{equation} 
\liminf_{(\mu_n, c_n) \to (\mu,0)} [\mu_n]_{c_n} \ge [\mu]_0\, .
 \end{equation}
The same proof as that of part \eqref{e.USC} of \cref{p.cont_PHS} can be used to prove,
\begin{equation} 
\limsup_{(\mu_n, c_n) \to (\mu,0)} [\mu_n]_{c_n} \le [\mu]_0\, .
 \end{equation} 
Indeed, in the topology  of  $\Pa$ the HS kernels $K(x,y,c_n)$  satisfy the assumptions of \cref{l.conv_int_Wass}. For this, it suffices to notice that, for any fixed value $y_0>0$, 
the sequence of continuous functions $\frac{|K(x,y_0,c_n)|}{1+x}$ is uniformly bounded, 
and that $K(x,y_0,c_n) \to K(x,y_0,0)$ uniformly on compact subsets of $\R_\plus$.
\end{proof}

\begin{proof}[Proof of part \eqref{i.cont_extremal_1} of \cref{p.cont_extremal} ]
In the case that $c_n=1$ for every $n \in \N$, the result is direct from the topology in $\Pg$ (use characterization \eqref{i.Villani_one_more} of \cref{t.Villani}). We assume that $c_n <1$, for every $n$.
It is a consequence of  \eqref{e.USC} of \cref{p.cont_PHS} that:
\begin{equation} \label{ext1_sup}
\limsup_{(\mu_n, c_n) \to (\mu,1)} [\mu_n]_{c_n} \le [\mu]_1\, .
 \end{equation}
 Recall that the HS barycenter is decreasing in the variable $c$; see \cref{p.basic}.\eqref{i.basic_mono_c}. In particular, $[\mu_n]_{c_n} \geq [\mu_n]_1$, for every $n \in \N$. Noticing that $\log x$ is a test function for the convergence in the topology of $\Pg$, we obtain:
\begin{multline}\label{ext1_inf}
\liminf_{n \to \infty} [\mu_n]_{c_n} \geq \liminf_{n \to \infty} [\mu_n]_1 \\ 
 = \exp \left( \liminf_{n \to \infty} \int \log x \, d \mu_n \right)
= \exp \int \log x  \, d \mu = [\mu]_1\, .
\end{multline}
The result follows combining \eqref{ext1_sup} and \eqref{ext1_inf}. 
\end{proof}

The following observation complements part \eqref{i.cont_extremal_1} of \cref{p.cont_extremal}, since it provides a sort of lower semicontinuity property at $c=1$ under a weaker integrability condition:

\begin{lemma}\label{l.final_LSC}
Let $c \in [0,1)$ and let $\mu \in \cP(\R_\plus)$ be such that $\log^-(x) \in L^1(\mu)$.
Then:
\begin{equation}
[\mu]_c \ge \exp \int \left( c^{-1} \log^+(x) - \log^-(x) \right) \, d\mu(x) \, . 
\end{equation}
\end{lemma}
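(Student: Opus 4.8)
The idea is to bound the HS barycenter from below by estimating the integrand $K(x,y,c)$ from below for each fixed $y>0$, and then taking the infimum over $y$. The key is a clean pointwise lower bound on the kernel that separates the behaviour for large and small $x$. Write $\kappa \coloneqq c^{-1}\log^+ x - \log^- x$; our target is $[\mu]_c \ge \exp\int \kappa\,d\mu$.

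First I would record, for $c\in(0,1)$ and fixed $y>0$, the inequality
\[
K(x,y,c) = \log y + c^{-1}\log\!\left(cy^{-1}x + 1 - c\right) \ge c^{-1}\log^+ x - \log^- x \, ,
\]
which I would prove by splitting into the cases $x \ge 1$ and $0 \le x < 1$. For $x\ge 1$: since $cy^{-1}x + 1-c \ge cy^{-1}x$, we get $K(x,y,c)\ge \log y + c^{-1}(\log c - \log y + \log x) = c^{-1}\log x + (1-c^{-1})\log y + c^{-1}\log c$; this is not quite what I want because of the $y$-dependent terms, so I would instead argue more carefully — the point is that for the purpose of the lemma we do not fix $y$ but rather use that the bound $[\mu]_c \ge e^{\int\kappa\,d\mu}$ is scale-covariant (by homogeneity, \cref{p.basic}.\eqref{i.basic_homog}, both sides scale by $\lambda$), so it suffices to prove it after normalizing, or alternatively to find a genuinely $y$-free lower bound. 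The cleanest route: use \cref{p.kernel}.\eqref{i.kernel_reflex}, $K(x,y,c)\ge\log x$, which already handles $x\ge 1$ giving $K(x,y,c)\ge \log x \ge c^{-1}\log x = c^{-1}\log^+ x$ only if $c^{-1}\log x \le \log x$, false. So that is also too weak for the $\log^+$ part.

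Let me reconsider: the correct elementary bound is $\log(cy^{-1}x + 1-c) \ge \log(cy^{-1}x) = \log x + \log(c/y)$ for all $x > 0$, hence $K(x,y,c) \ge \log x + c^{-1}\log x + (c^{-1}-1)\log(c/y)$... still $y$-dependent. The resolution is that \cref{l.final_LSC} does not claim the infimum-free bound termwise in $y$; rather, I would take the infimum over $y$ after integrating. Concretely: for each $y$, combine the bound $K(x,y,c)\ge \log x$ (good for $x$ large, but we need the $c^{-1}$ factor) — actually the honest approach is to use a weighted combination. Since $cy^{-1}x + 1 - c \ge \max(cy^{-1}x,\, 1-c)$ and also $\ge (cy^{-1}x)^{\theta}(1-c)^{1-\theta}$ for $\theta\in[0,1]$ by weighted AM–GM, picking $\theta$ appropriately per region gives the split bound. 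The main obstacle — and the step I would spend the most care on — is organizing this so that after integration and taking $\inf_{y>0}$ the $y$-dependent terms either cancel or are dominated, yielding exactly $\exp\int\kappa\,d\mu$; I expect one uses $\inf_{y>0}[\,a\log y + (\text{const})\,] $ type optimization as in the proof of \cref{p.HS0} and \cref{p.subcritical}. The hypothesis $\log^- x \in L^1(\mu)$ guarantees the negative part of the integrand is integrable so the integral $\int\kappa\,d\mu$ is well-defined in $(-\infty,+\infty]$, and the case $c=0$ is immediate from \cref{p.HS0} since then $\kappa = x$ when... no, $c^{-1}\log^+x$ blows up; for $c=0$ the statement should be read as $[\mu]_0 = \int x\,d\mu \ge \exp\int(\infty\cdot\log^+x - \log^-x)$, interpreted as $+\infty$ unless $\mu$ is supported in $[0,1]$, where it reduces to $\int x\,d\mu \ge \exp\int\log x\,d\mu$, the AM–GM / Jensen inequality. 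So the $c=0$ case is Jensen, and the $c\in(0,1)$ case is the kernel estimate above followed by monotone convergence to pass the bound inside.

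I would present it as: (1) dispatch $c=0$ via Jensen; (2) for $c\in(0,1)$ fix $y>0$, prove the pointwise bound $K(x,y,c) \ge c^{-1}\log^+ x - \log^- x + g(y)$ where $g(y)\to 0$ appropriately or is absorbed — more precisely prove $\int K(x,y,c)\,d\mu \ge \int\kappa\,d\mu$ directly by the region split, using that on $\{x\ge 1\}$, $cy^{-1}x+1-c \ge c y^{-1} x$ so the contribution is $\ge \int_{x\ge1}(c^{-1}\log x + c^{-1}\log(c/y) + \log y)d\mu$, and on $\{x<1\}$, $cy^{-1}x + 1-c \ge 1-c$ combined with $\ge cy^{-1}x+something$... (3) take $\inf_{y>0}$; the $y$-terms assemble into something like $\mu(\{x\ge1\})\big[(c^{-1}-1)\log y + c^{-1}\log c\big]$ whose infimum over $y>0$ is $-\infty$ unless handled — which tells me the region split must be finer, using on $\{0\le x<1\}$ the bound $cy^{-1}x+1-c\ge (cy^{-1}x)^{0}(1-c)^1$... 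This persistent $y$-divergence signals that the true proof uses $K(x,y,c) \ge \log x + c^{-1}\log^+\!\big(\tfrac{cx}{y}\big)$-type estimates balanced against $\log x \ge -\log^- x$, and the main work is this balancing. I flag this as the crux and would carry it out by testing the two natural choices $y$ small and $y$ large and interpolating, expecting the final inequality to drop out of the concavity of $\log$ exactly as $K_c \le 0$ did in \cref{p.kernel}.
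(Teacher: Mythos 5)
Your proposal never actually closes: you correctly notice that every candidate pointwise estimate leaves a $y$-dependent term whose infimum over $y>0$ ruins the bound, and you leave the ``balancing'' step as an unproved crux. That gap cannot be filled, because the inequality you are asked to prove is false as stated. Take $\mu=\delta_x$ with $x>1$ and any $c\in(0,1)$: by reflexivity (\cref{p.basic}.\eqref{i.basic_reflex}) the left-hand side is $[\delta_x]_c=x$, while the right-hand side is $\exp\left(c^{-1}\log x\right)=x^{1/c}>x$. Your auxiliary claim that both sides are scale-covariant is also incorrect, and is in fact the warning sign: $[\mu]_c$ is homogeneous under $x\mapsto\lambda x$, but $\int\left(c^{-1}\log^+x-\log^-x\right)d\mu$ is not, because of the kink at $x=1$ and the factor $c^{-1}$ on the positive part only. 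The bound you discarded as ``too weak,'' namely $K(x,y,c)\ge\log x$ for every $y$ (\cref{p.kernel}.\eqref{i.kernel_reflex}), is the only estimate uniform in $y$, and it yields the correct statement that survives: $[\mu]_c\ge\exp\int\log x\,d\mu$ whenever $\log^-x\in L^1(\mu)$ (equivalently one may keep a factor $c^{-1}$ on the $\log^-$ term, but not on $\log^+$). This weaker inequality is exactly what is needed in the sixth case of the proof of \cref{t.ergodic}, where the Birkhoff average of $\log^+f$ diverges while that of $\log^-f$ stays bounded.

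Comparing with the paper: its proof fixes $y=1$ and asserts ``by definition, $[\mu]_c\ge\int K(x,1,c)\,d\mu$,'' then $K(x,1,c)\ge c^{-1}\log x$ for $x\ge1$. Both steps go the wrong way. Since \cref{def.HS} is an infimum over $y$, specializing to $y=1$ gives an upper bound, $\log[\mu]_c\le\int K(x,1,c)\,d\mu$; and for $x>1$ one has $cx+1-c<x$, hence $K(x,1,c)=c^{-1}\log(cx+1-c)<c^{-1}\log x$ (also, for $c=0$, which the statement formally allows, $c^{-1}$ is meaningless, as you noted). So your failure to reproduce the estimate is not a failure of technique; the statement needs to be weakened by removing the $c^{-1}$ from the $\log^+$ term, after which your own first observation, or simply monotonicity in $c$ (\cref{p.basic}.\eqref{i.basic_mono_c}), proves it in one line and still gives the divergence $\log[\mu_n^\omega]_{c_n}\to\infty$ required in the ergodic-theorem application.
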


\begin{proof}
By definition, $[\mu]_c \ge \int K(x,1,c) \, d\mu(x)$.
Note that if  $x \ge 1$, then $K(x,1,c)  = c^{-1} \log(cx+1-c) \ge c^{-1} \log x$,
while if $x<1$, then $K(x,1,c) \ge \log x$ by \cref{p.kernel}.\eqref{i.kernel_reflex}.
In any case, $K(x,1,c) \ge c^{-1} \log^+(x) - \log^-(x)$, and the \lcnamecref{l.final_LSC} follows.
\end{proof}

\section{Ergodic theorems for symmetric and HS means}\label{s.ergodic}

Symmetric means \eqref{e.def_sym_mean_again} are only defined for lists of nonnegative numbers. 
On the other hand, HS barycenters are defined for probability measures on $\R_\plus$, and are therefore much more flexible objects. In particular, there is an induced concept of HS mean of a  list of nonnegative numbers, which we have already introduced in \eqref{e.def_HS_mean}. 
We can also define the HS mean of a function:

\begin{definition}\label{def.functional_HS}
If $(\Omega, \cF, \P)$ is a probability space, $f \colon \Omega \to \R_\plus $ is a measurable nonnegative function, and $c \in [0,1]$, then the \emph{Hal\'asz--Sz\'ekely mean} (or \emph{HS mean}) with parameter $c$ of the function $f$ with respect to the probability measure $\P$ is:
\begin{equation}\label{e.def_functional_HS}
[f \mid \P]_c \coloneqq [f_* \P]_c \, .
\end{equation}
that is, the HS barycenter with parameter $c$ of the push-forward measure on $\R_\plus$.
In the case $c=1$, we require that $\log f$ is semi-integrable. 
\end{definition}

For arithmetic means, the classical ergodic theorem of Birkhoff states the equality between limit time averages and spatial averages. From the probabilistic viewpoint, Birkhoff's theorem is the strong law of large numbers. 
We prove an ergodic theorem that applies simultaneously to symmetric and HS means, and extends results of \cite{HS76,vanEs}:

\begin{theorem}\label{t.ergodic}
Let $(\Omega, \cF, \P)$ be a probability space, let $T \colon \Omega \to \Omega$ be an ergodic measure-preserving transformation, and let $f \colon \Omega \to \R_\plus$ be a nonnegative measurable function.
Then there exists a measurable set $R \subseteq \Omega$ with $\P(R)=1$ with the following properties. 
For any $\omega \in R$,
for any $c \in [0,1]$ such that 
\begin{equation}\label{e.non_critical}
c \neq 1 - \P(f^{-1}(0))
\end{equation}
and 
\begin{equation}\label{e.integrability_1}
c < 1  \quad \text{unless $\log f$ is semi-integrable,}
\end{equation}
and for any sequence $(c_n)$ in $[0,1]$ tending to $c$, 
we have:
\begin{equation}\label{e.ergodic_HS}
\lim_{n \to \infty} \hsm_{c_n} \big( f(\omega), f(T\omega), \dots, f(T^{n-1} \omega) \big) = [f \mid \P]_c\, ;
\end{equation}
furthermore, for any sequence $(k_n)$ of integers such that $1 \le k_n \le n$ and $k_n/n \to c$, we have:
\begin{equation}\label{e.ergodic_sym}
\lim_{n \to \infty} \sym_{k_n} \big( f(\omega), f(T\omega), \dots, f(T^{n-1} \omega) \big) = [f \mid \P]_c \, .
\end{equation}
\end{theorem}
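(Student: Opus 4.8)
The plan is to realize the symmetric and HS means as functionals of the empirical measures $\mu_n^\omega \coloneqq \frac1n\sum_{j=0}^{n-1}\delta_{f(T^j\omega)}$, so that $\hsm_{c}(f(\omega),\dots,f(T^{n-1}\omega)) = [\mu_n^\omega]_c$ by \eqref{e.def_HS_mean}, and then to feed the convergence of $(\mu_n^\omega)$ into the (semi)continuity results of \cref{s.continuity}. The set $R$ will be the countable intersection of the full-measure sets on which Birkhoff's ergodic theorem holds — in its usual form for integrable functions, and in its extended $[0,+\infty]$-valued form for nonnegative functions — applied to $g\circ f$ for $g$ ranging over the countable family $\cC\subseteq C_\mathrm{b}(\R_\plus)$ of \eqref{e.countable_test}, and to the functions $f$, $\log(1+f)$, $\log^+ f$, $\log^- f$, and $\mathbf{1}_{f^{-1}(0)}$. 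On $R$ one then reads off, writing $\mu\coloneqq f_*\P$: the weak convergence $\mu_n^\omega\rightharpoonup\mu$ (from \eqref{e.countable_test}); the convergence of the averages $\int\log(1+x)\,d\mu_n^\omega$ and $\int x\,d\mu_n^\omega$ to $\int\log(1+x)\,d\mu$ and $\int x\,d\mu$ respectively, with limits in $[0,+\infty]$ (so that, by characterization \eqref{i.Villani_one_more} of \cref{t.Villani}, $\mu_n^\omega\to\mu$ in $\PHS$ as soon as $\mu\in\PHS$); and $\mu_n^\omega(0)\to\mu(0)=\P(f^{-1}(0))$.

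Fixing $\omega\in R$ and $c$ satisfying \eqref{e.non_critical} and \eqref{e.integrability_1}, and letting $c_n\to c$ in $[0,1]$, I would establish \eqref{e.ergodic_HS}, i.e.\ $[\mu_n^\omega]_{c_n}\to[\mu]_c$, by a case analysis designed to match each regime with the appropriate result of \cref{s.continuity}. The role of \eqref{e.non_critical} is that it says precisely $\mu(0)\neq 1-c$, which is exactly the configuration excluded from the lower semicontinuity statement of \cref{t.LSC_weak}; hence, for $c<1$, \cref{t.LSC_weak} gives $\liminf_n[\mu_n^\omega]_{c_n}\ge[\mu]_c$ unconditionally. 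For the matching upper bound I would argue as follows. If $[\mu]_c=+\infty$, the lower bound already suffices. If $c=1$, then \eqref{e.non_critical} forces $\mu(0)>0$, so $\int\log^-x\,d\mu=+\infty$ and, by \eqref{e.integrability_1}, $\int\log^+x\,d\mu<\infty$, whence $[\mu]_1=0$; and since $\mu_n^\omega(0)\to\mu(0)>0=\lim_n(1-c_n)$, the measure $\mu_n^\omega$ is supercritical for $c_n$ for all large $n$, so $[\mu_n^\omega]_{c_n}=0$ by \cref{p.critical_supercritical}.\eqref{i.supercritical}. If $[\mu]_c=0$ and $c<1$, then necessarily $c>0$ (otherwise $\mu=\delta_0$ by \cref{p.HS0}, excluded by \eqref{e.non_critical}), so $\mu\in\PHS$ by \cref{p.HS_infinity}, and \cref{p.zeros} together with \eqref{e.non_critical} leaves only the supercritical possibility $\mu(0)>1-c$; the same argument as before then gives $[\mu_n^\omega]_{c_n}=0$ eventually. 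If $0<[\mu]_c<+\infty$ and $c=0$, then monotonicity in the parameter (\cref{p.basic}.\eqref{i.basic_mono_c}) and \cref{p.HS0} give $[\mu_n^\omega]_{c_n}\le[\mu_n^\omega]_0=\int x\,d\mu_n^\omega\to[\mu]_0$. Finally, if $0<[\mu]_c<+\infty$ and $0<c<1$, then $\mu\in\PHS$ by \cref{p.HS_infinity}, the configuration is subcritical by \cref{p.zeros} and \eqref{e.non_critical}, and since $\mu_n^\omega\to\mu$ in $\PHS$ and $c_n\to c$, part \eqref{e.USC} of \cref{p.cont_PHS} — whose exceptional case $c=0$ does not occur — yields $\limsup_n[\mu_n^\omega]_{c_n}\le[\mu]_c$. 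In every case $[\mu_n^\omega]_{c_n}\to[\mu]_c$.

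For \eqref{e.ergodic_sym}, write $\underline{x}_n\coloneqq(f(\omega),\dots,f(T^{n-1}\omega))$ and apply \eqref{e.ergodic_HS} with $c_n\coloneqq k_n/n\to c$ to get $\hsm_{k_n/n}(\underline{x}_n)=[\mu_n^\omega]_{k_n/n}\to[\mu]_c$; it then suffices to control $\sym_{k_n}(\underline{x}_n)$ from above. If $c>0$ then $k_n\to\infty$, and \cref{t.main} together with \cref{l.asymp} gives
\begin{equation}
\hsm_{k_n/n}(\underline{x}_n)\ \le\ \sym_{k_n}(\underline{x}_n)\ \le\ (9k_n)^{\frac1{2k_n}}\,\hsm_{k_n/n}(\underline{x}_n)\,,
\end{equation}
whose outer terms both converge to $[\mu]_c$. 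If $c=0$ the factor $(9k_n)^{1/(2k_n)}$ may fail to converge to $1$, so I would instead combine the first inequality of \cref{t.main} with Maclaurin's inequality as in \cref{r.Maclaurin}: $\hsm_{k_n/n}(\underline{x}_n)\le\sym_{k_n}(\underline{x}_n)\le\sym_1(\underline{x}_n)=\int x\,d\mu_n^\omega\to[\mu]_0$, and the squeeze applies again.

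The hard analytic inputs — the two-sided comparison \cref{t.main}, resting on the van der Waerden-type inequality \eqref{e.our_vdW}, and the semicontinuity analysis of \cref{s.continuity} — are already in hand, so the main difficulty here is organizational: one must verify that the hypotheses \eqref{e.non_critical} and \eqref{e.integrability_1} rule out exactly the configurations at which the barycenter is discontinuous (the critical ones, and $c=1$ with infinite $\int\log^+x\,d\mu$), and that a single full-measure set $R$, constructed before $c$ is chosen, carries all the convergences invoked across the case analysis, so that the conclusion holds simultaneously for every admissible $c$ and every sequence $c_n\to c$.
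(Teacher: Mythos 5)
Your proposal is correct, and although it shares the paper's overall architecture (empirical measures $\mu_n^\omega$, a Birkhoff-generated full-measure set $R$, semicontinuity of the barycenter, then the squeeze via \cref{t.main}, \cref{l.asymp} and Maclaurin), your case analysis is genuinely different in several places. The paper (through its \cref{l.samples}) upgrades $\mu_n^\omega\rightharpoonup\mu$ to convergence in $\PHS$, $\Pa$ or $\Pg$ according to the integrability of $f$, and then invokes \cref{p.cont_PHS}, \cref{p.cont_extremal} and \cref{l.final_LSC} in six cases indexed by $(c,[\mu]_c)$. You keep the lower bound uniform, taking it from \cref{t.LSC_weak} for all $c<1$ (legitimate, since \eqref{e.non_critical} rules out the exceptional configuration), and you obtain the matching upper bound more cheaply: by adding Birkhoff for $\mathbf{1}_{f^{-1}(0)}$ to the construction of $R$ you get $\mu_n^\omega(0)\to\mu(0)$, so in every supercritical situation the empirical measures are eventually supercritical and $[\mu_n^\omega]_{c_n}=0$ by \cref{p.critical_supercritical}.\eqref{i.supercritical}; and at $c=0$ with finite mean you replace the $\Pa$-machinery of \cref{p.cont_extremal}.\eqref{i.cont_extremal_0} by monotonicity in the parameter, $[\mu_n^\omega]_{c_n}\le[\mu_n^\omega]_0$, plus Birkhoff for $f$. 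This eliminates \cref{p.cont_extremal}, \cref{l.final_LSC} and the $\Pa$/$\Pg$ parts of \cref{l.samples} altogether. The price is paid at $c=1$: you read \eqref{e.non_critical} literally, which at $c=1$ forces $\P(f^{-1}(0))>0$, hence (with semi-integrability) $[\mu]_1=0$ and eventual supercriticality of $\mu_n^\omega$. That is a valid proof of the statement as written, but the paper's fifth and sixth cases prove strictly more, namely convergence at $c=1$ for every $f$ with semi-integrable $\log f$ — in particular for strictly positive $f$, the geometric-mean regime — which your shortcut does not reach. One small organizational point: your opening case (``$[\mu]_c=\infty$, the lower bound suffices'') tacitly assumes $c<1$, where the lower bound was established; this is harmless because your $c=1$ analysis shows $[\mu]_1=0$ under the stated hypotheses, so the overlap is empty.
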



\begin{remark} 
Since we allow HS means to take infinity value, we do not need integrability conditions as in \cite{HS76,vanEs}, except for the unavoidable hypothesis \eqref{e.integrability_1}.
In the supercritical case $\P(f^{-1}(0)) > 1-c$, both limits \eqref{e.ergodic_HS} and \eqref{e.ergodic_sym} are almost surely attained in finite time. 
In the critical case $\P(f^{-1}(0)) = 1-c$, strong convergence does not necessarily hold, and the values $\sym_{k_n} \big( f(\omega), \dots, f(T^{n-1} \omega) \big)$ 
may oscillate. However, in the IID setting, 
van Es  proved that the sequence of symmetric means converges \emph{in distribution},  provided that the sequence $(\sqrt{n} (k_n/n -c))$ converges in $[-\infty, \infty]$: see \cite[Theorem A1 (b)]{vanEs}.
\end{remark}

As we will soon see, part \eqref{e.ergodic_HS} of \cref{t.ergodic} is obtained using the results about continuity of the HS barycenter with respect to various topologies proved in \cref{s.continuity}, and then  part \eqref{e.ergodic_sym} follows from the inequalities of \cref{t.main} and \cref{r.Maclaurin}.

To begin the proof, 
let us fix $(\Omega, \cF, \P)$, $T$, and $f$ as in the statement, and let  $\mu \coloneqq f_* \P \in \cP(\R_\plus)$ denote the push-forward measure.
Given $\omega \in \Omega$, we consider the  sequence of associated \emph{sample measures}:
\begin{equation}
\mu_n^{\omega} \coloneqq \frac{\delta_{f(\omega)} + \delta_{f(T(\omega))}+ \dots + \delta_{f(T^{n-1}(\omega))}}{n} \, .
\end{equation}
As the next result shows, these sample measures converge almost surely\footnote{\cite[Theorem~II.7.1]{Partha_book} contains a similar result, with essentially the same proof.}: 

\begin{lemma}\label{l.samples}
There exists a measurable set $R \subseteq \Omega$ with $\P(R)=1$ such that for every $\omega \in R$, 
the corresponding sample measures converge weakly to $\mu$:
\begin{equation}
\mu_n^\omega \rightharpoonup \mu \, ;
\end{equation}
furthermore, stronger convergences may hold according to the function $f$:
\begin{enumerate}
\item if $\log(1+f) \in L^1(\P)$, then  $\mu_n^\omega \to \mu$ in the topology of $\PHS$; 
\item if $f \in L^1(\P)$, then          $\mu_n^\omega \to \mu$ in the topology of $\Pa$;
\item if $|\log f| \in L^1(\P)$, then   $\mu_n^\omega \to \mu$ in the topology of $\Pg$. 
\end{enumerate}
\end{lemma}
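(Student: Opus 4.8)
The plan is to prove the weak convergence by reducing to Birkhoff's ergodic theorem applied to a countable family of test functions, and then upgrade to the stronger topologies by verifying the appropriate moment conditions from \cref{t.Villani}.

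First I would invoke \eqref{e.countable_test} to fix a countable set $\cC = \{\phi_1, \phi_2, \dots\} \subseteq C_\mathrm{b}(\R_\plus)$ such that weak convergence in $\cP(\R_\plus)$ is detected by testing against the elements of $\cC$. For each $j$, the function $\phi_j \circ f \colon \Omega \to \R$ is bounded and measurable, hence in $L^1(\P)$, so Birkhoff's ergodic theorem gives a full-measure set $R_j \subseteq \Omega$ on which $\frac1n \sum_{i=0}^{n-1} \phi_j(f(T^i \omega)) \to \int \phi_j \circ f \, d\P = \int \phi_j \, d\mu$; note the left-hand side is exactly $\int \phi_j \, d\mu_n^\omega$. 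Setting $R \coloneqq \bigcap_j R_j$, we still have $\P(R)=1$, and for every $\omega \in R$ the convergence $\int \phi_j \, d\mu_n^\omega \to \int \phi_j \, d\mu$ holds for all $j$, which by the choice of $\cC$ means $\mu_n^\omega \rightharpoonup \mu$.

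Next I would handle the three refined statements. In each case the target topology is a $1$-Wasserstein topology, and by the equivalence \eqref{i.Villani_one_more} of \cref{t.Villani}, once weak convergence is known it suffices to prove convergence of the relevant first moment $\int \mathrm{d}(\mathord\cdot, x_0)\, d\mu_n^\omega \to \int \mathrm{d}(\mathord\cdot, x_0)\, d\mu$. For the $\PHS$ case the relevant integrand is $\log(1+x)$ (using the metric $\mathrm{d}_\mathrm{HS}$ with basepoint $0$); for $\Pa$ it is $x$; for $\Pg$ it is $|\log x|$ (metric $\mathrm{d}_\mathrm{g}$ with basepoint $1$). In each case the hypothesis is precisely that the corresponding composition with $f$ lies in $L^1(\P)$: $\log(1+f) \in L^1(\P)$, $f \in L^1(\P)$, or $|\log f| \in L^1(\P)$. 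So Birkhoff's theorem applied to each of these three integrable functions gives, on a further full-measure set, $\int \log(1+x)\, d\mu_n^\omega \to \int \log(1+x)\, d\mu$ and similarly for the other two. Intersecting these three extra full-measure sets with $R$ (and relabelling the result $R$) keeps $\P(R)=1$, and on this $R$ the refined convergences follow from \cref{t.Villani}.

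I do not expect a serious obstacle here: the argument is essentially ``ergodic averages of observables converge, so sample measures converge,'' and the only care needed is the bookkeeping to ensure a single full-measure set $R$ works simultaneously for all the stated conclusions — which is why the countable test family $\cC$ from \eqref{e.countable_test} is essential (an uncountable intersection of full-measure sets need not have full measure). A minor point worth stating explicitly is that in the $\Pg$ case one must note that $|\log f| \in L^1(\P)$ forces $f > 0$ almost surely, so $\mu$ and all $\mu_n^\omega$ (for $\omega$ in a suitable full-measure set) are genuinely supported on $\R_{\plus\plus}$, making them legitimate points of $\Pg$.
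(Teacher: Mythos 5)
Your proposal is correct and follows essentially the same route as the paper: a countable test family plus Birkhoff's theorem for the weak convergence, then Birkhoff applied to $\log(1+f)$, $f$, and $|\log f|$ combined with characterization \eqref{i.Villani_one_more} of \cref{t.Villani} (with the metrics $\mathrm{d}_\mathrm{HS}$, the standard distance, and $\mathrm{d}_\mathrm{g}$) for the three stronger convergences. Your explicit remark that $|\log f|\in L^1(\P)$ forces $f>0$ almost surely, so that the sample measures lie in $\Pg$, is a small point the paper leaves implicit, but otherwise the arguments coincide.
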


\begin{proof}
Let $\cC \subset C_\mathrm{b}(\R_\plus)$ be a countable set of bounded continuous functions which is sufficient to test weak convergence, i.e., with property \eqref{e.countable_test}.
For each $\phi \in \cC$, applying Birkhoff's ergodic theorem to the function $\phi \circ f$ we obtain a measurable set $R \subseteq \Omega$ with $\P(R)=1$ such that for all $\omega \in R$,
\begin{equation}
\lim_{n \to \infty} \int \phi \, d\mu^\omega_n = 
\lim_{n \to \infty} \frac{1}{n}\sum_{j=0}^{n-1} \phi (f(T^i \omega)) = 
\int \phi \circ f \, d\P = 
\int \phi \, d\mu \, . 
\end{equation}
Since $\cC$ is countable, we can choose a single measurable set $R$ of full probability that works for all $\phi \in \cC$. Then we obtain $\mu_n^\omega \rightharpoonup \mu$ for all $\omega \in R$.
To obtain stronger convergences, we apply Birkhoff's theorem to the functions $\log(1+f)$, $f$, and $|\log f|$, provided they are integrable, and reduce the set $R$ accordingly.
If, for example, $f$ is integrable, then for all $\omega \in R$ we have:
\begin{equation}
\lim_{n \to \infty} \int x \, d\mu^\omega_n(x) = 
\lim_{n \to \infty} \frac{1}{n} \sum_{j=0}^{n-1} f(T^i \omega) = 
\int f \, d\P = 
\int x \, d\mu(x) \, . 
\end{equation}
Applying part \eqref{i.Villani_one_more} of \cref{t.Villani} with $x_0 = 0$ and $\mathrm{d}(x,x_0) = x$, we conclude that $\mu_n^\omega$ converges to $\mu$ in the topology of $\Pa$.
The assertions about convergence in $\PHS$ and $\Pg$ are proved analogously, using instead the corresponding distances \eqref{e.dist_HS} and \eqref{e.dist_g}. 
\end{proof}

\begin{proof}[Proof of \cref{t.ergodic}]
Let $R$ be the set given by \cref{l.samples}.
By the semi-integrable version of Birkhoff's theorem (see e.g.\ \cite[p.~15]{Krengel}),
we can reduce $R$ if necessary and assume that for all $\omega \in R$, 
\begin{equation}\label{e.Birkhoff_log}
\lim_{n \to \infty} \frac{1}{n} \sum_{j=0}^{n-1} \log^\pm(f(T^i \omega)) = 
\int \log^\pm(f(\omega)) \, d\P(\omega) \, . 
\end{equation}
Fix a point $\omega \in R$ and a number $c \in [0,1]$ satisfying conditions \eqref{e.non_critical} and \eqref{e.integrability_1}.
Consider any sequence $(c_n)$ in $[0,1]$ converging to $c$.
Let us prove \eqref{e.ergodic_sym}, or equivalently,
\begin{equation}\label{e.conv_claim}
[\mu_n^\omega]_{c_n} \to [\mu]_c \, .  
\end{equation}
There are several cases to be considered, and in all but the last case  we will use \cref{l.samples}: 
\begin{itemize}[itemsep=1ex]

\item First case: $0 \le c < 1$ and $[\mu]_c = \infty$. 
Since $\mu_n^\omega \rightharpoonup \mu$, \eqref{e.conv_claim} is a consequence of \cref{t.LSC_weak}.

\item Second case: $0<c<1$ and $[\mu]_c < \infty$. 
Then $\log(1+f) \in L^1(\P)$. Therefore $\mu_n^\omega \to \mu$ in the topology of $\PHS$, and 
 \cref{p.cont_PHS} implies \eqref{e.conv_claim}.

\item Third case: $c=0$ and $[\mu]_0 < \infty$. 
Then $\log f \in L^1(\P)$, and hence $\mu_n^\omega \to \mu$ in the topology of $\Pa$.
So \eqref{e.conv_claim} follows from \cref{p.cont_extremal}.\eqref{i.cont_extremal_0}.

\item Fourth case: $c=1$ and $[\mu]_1 =0$. 
Then $\log (1+f) \in L^1(\P)$. Thus $\mu_n^\omega \to \mu$ in the topology of $\PHS$, and 
\cref{p.cont_PHS} yields \eqref{e.conv_claim}.

\item Fifth case: $c=1$ and  $0< [\mu]_1 < \infty$. 
Then $|\log f| \in L^1(\P)$. Therefore $\mu_n^\omega \to \mu$ in the topology of $\Pg$, and 
\eqref{e.conv_claim} becomes a consequence of \cref{p.cont_extremal}.\eqref{i.cont_extremal_1}.

\item Sixth case: $c=1$ and $[\mu]_1 = \infty$.
Then $\log^-(f)$ is integrable, but $\log^+(f)$ is not.
If $n$ is large enough then $c_n>0$, so \cref{l.final_LSC} gives: 
\begin{align}
\log [\mu_n^\omega]_{c_n} 
&\ge \int \left( c_n^{-1} \log^+(x) - \log^-(x) \right) \, d\mu_n^\omega(x)  
\\
&= \frac{1}{c_n} \cdot \frac{1}{n} \sum_{i=0}^{n-1} \log^+(f(T^i\omega)) 
 -  \frac{1}{n} \sum_{i=0}^{n-1} \log^-(f(T^i\omega)) \, ,
\end{align}
which by \eqref{e.Birkhoff_log} tends to $\infty$.
This proves \eqref{e.conv_claim} in the last case.

\end{itemize}

Part~\eqref{e.ergodic_HS} of the \lcnamecref{t.ergodic} is proved, and now let use it to prove part~\eqref{e.ergodic_sym}. 
Consider a sequence $(k_n)$ of integers such that $1 \le k_n \le n$ and $c_n \coloneqq k_n/n$ tends to~$c$.
By \cref{t.main},
\begin{equation}
[\mu_n^\omega]_{c_n} \le 
\sym_{k_n} \big( f(\omega), f(T\omega), \dots, f(T^{n-1} \omega) \big)  \le 
\frac{\binom{n}{k_n}^{-1/k_n}}{B(c_n)}  \, [\mu_n^\omega]_{c_n} \, .
\end{equation}
If $[\mu]_c = \infty$, i.e.\ $[\mu_n^\omega]_{c_n} \to \infty$, then the first inequality forces the symmetric means to tend to $\infty$ as well.
So let us assume that $[\mu]_c$ is finite. 
If $c > 0$ then, by \cref{l.asymp}, the fraction on the RHS converges to $1$ as $n \to \infty$, and therefore we obtain the desired limit \eqref{e.ergodic_sym}.
If $c = 0$, then we appeal to Maclaurin inequality in the form
\begin{equation} 
\sym_{k_n} \big( f(\omega), f(T\omega), \dots, f(T^{n-1} \omega) \big)  \le 
\sym_{1} \big( f(\omega), f(T\omega), \dots, f(T^{n-1} \omega) \big) \, .
\end{equation}
So:
\begin{equation}
[\mu_n^\omega]_{c_n} \le 
\sym_{k_n} \big( f(\omega), f(T\omega), \dots, f(T^{n-1} \omega) \big)  \le 
[\mu_n^\omega]_{0} \, .
\end{equation} 
Since \eqref{e.conv_claim} also holds with $c_n \equiv 0$, we see that all three terms converge together to $[\mu]_c$, thus proving \eqref{e.ergodic_sym} also in the case $c=0$.
\end{proof}

Like Birkhoff's Ergodic Theorem itself, \cref{t.ergodic} should be possible to generalize in numerous directions. For example, part \eqref{e.ergodic_HS} can be easily adapted to flows or semiflows (actions of the group $\R$ or the semigroup $\R_\plus$). One can also consider actions of amenable groups, like \cite{Austin,Navas}. We shall not pursue these matters.
In another direction, let us note that Central Limit Theorems for symmetric means of i.i.d.\ random variables have been proved by Székely~\cite{Szekely_CLT} and van~Es~\cite{vanEs}.

A weaker version of \cref{t.ergodic}, in which the function $f$ is assumed to be bounded away from zero and infinity,  was obtained in \cite[Theorem 5.1]{BIP} as a corollary of a fairly general pointwise ergodic theorem: the \emph{Law of Large Permanents} \cite[Theorem 4.1]{BIP}.  We now briefly discuss a generalization of that result obtained by  Balogh and Nguyen \cite[Theorem 1.6]{Balogh}. Suppose that $T$ is an ergodic measure preserving action of the semigroup $\N^2$ on the space $(X, \mu)$. Given an observable $g: X  \to \R_{\plus\plus}$ and a point $x \in X$, we define an infinite matrix whose  $(i,j)$-entry is $g(T^{(i,j)}x)$. Consider square truncations of this matrix and then take the limit of the corresponding permanental means as the size of the square tends to infinity. It is proved that the limit exists $\mu$-almost everywhere. But not only that, it is also possible to identify the limit. It turns out that it is a \emph{functional scaling mean}. This is a far reaching generalization of the matrix scaling mean \eqref{e.def_sm}: see \cite[Section 3.1]{BIP}. 





\section{Concavity properties of the HS barycenter}\label{s.concavity}

In \cref{s.continuity}, we have studied properties of the HS barycenter that rely on topological structures. 
In this section, we discuss properties that are related to affine (i.e.\ convex) structures.


\subsection{Basic concavity properties}\label{ss.concavity_basic}

Let us first consider the HS barycenter as a function of the measure.

\begin{proposition}\label{p.conc_mu}
For all $c \in [0,1]$, the function $\mu \in \PHS \mapsto [\mu]_c$ is log-concave.
\end{proposition}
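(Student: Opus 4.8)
The plan is to read off the result directly from the definition \eqref{e.def_HS}, which presents $\log[\mu]_c$ as an infimum over the auxiliary parameter $y>0$ of functionals that are \emph{affine} in $\mu$; since a pointwise infimum of affine functions is concave, concavity of $\mu \mapsto \log[\mu]_c$ on $\PHS$ follows at once. Concretely, I would fix $\mu_0,\mu_1 \in \PHS$ and $t \in (0,1)$ (the cases $t\in\{0,1\}$ being trivial), put $\mu_t \coloneqq (1-t)\mu_0 + t\mu_1$, and first note that $\mu_t \in \PHS$ since the integrability condition \eqref{e.HS_integrability} is preserved under convex combinations. Writing $m_i \coloneqq \log[\mu_i]_c \in [-\infty,+\infty]$, the goal is $\log[\mu_t]_c \ge (1-t)m_0 + t m_1$; if $m_0=-\infty$ or $m_1=-\infty$ this is trivial, so I may assume $m_0,m_1 > -\infty$.

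Next I would use that, for each fixed $y>0$, the kernel $K(\mathord{\cdot},y,c)$ is bounded below — by $K(0,y,c)>-\infty$ when $c<1$, and equal to $\log x$ with $\log^+ x \in L^1(\mu)$ when $c=1$ — so that $\int K(x,y,c)\,d\mu$ is a well-defined element of $(-\infty,+\infty]$ for every $\mu \in \PHS$ and depends \emph{additively} on $\mu$. Hence
\[
\int K(x,y,c)\,d\mu_t(x) = (1-t)\int K(x,y,c)\,d\mu_0(x) + t\int K(x,y,c)\,d\mu_1(x) \ge (1-t)m_0 + t m_1,
\]
the inequality coming from the fact that each integral on the right dominates $\inf_{y'>0}\int K(x,y',c)\,d\mu_i = m_i$, by \eqref{e.def_HS}. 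Taking the infimum over $y>0$ on the left gives $\log[\mu_t]_c \ge (1-t)m_0 + t m_1$, i.e.\ $[\mu_t]_c \ge [\mu_0]_c^{1-t}[\mu_1]_c^{t}$, which is the asserted log-concavity.

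I do not expect a genuine obstacle: the argument is just the standard "infimum of affine functionals is concave" principle. The only points requiring a little care are the bookkeeping with extended-real values (handled by isolating the $m_i=-\infty$ case, and noting that $[\mu_i]_c<\infty$ whenever $c>0$ by \cref{p.HS_infinity}, while for $c=0$ both sides are explicitly affine) and the verification that $\mu \mapsto \int K(x,y,c)\,d\mu$ remains additive on $\PHS$, which follows from the lower bound on the kernel in \cref{p.kernel} together with the integrability built into the definition of $\PHS$.
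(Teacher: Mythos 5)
Your proposal is correct and is essentially the paper's own proof: both arguments express $\log[\mu]_c$ via \eqref{e.def_HS} as an infimum over $y>0$ of functionals that are affine in $\mu$, and invoke the fact that an infimum of affine functions is concave. The extra care you take with the extended-real bookkeeping (isolating the $-\infty$ case and checking additivity of $\mu\mapsto\int K(x,y,c)\,d\mu$ on $\PHS$) is sound but not a different route.
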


\begin{proof}
By definition, 
\begin{equation}
\log [\mu]_c = \inf_{y>0} \int K(x,y,c) \, d\mu(x) \, .
\end{equation}
For each $c$ and $y$, the function $\mu \mapsto \int K(x,y,c) \, d\mu(x)$ is affine.
Since the infimum of affine functions is concave, we conclude that $\log [\mu]_c$ is concave as a function of~$\mu$. 
\end{proof}

Next, let us consider the HS barycenter as a function of the parameter.

\begin{proposition}\label{p.conc_c}
For all $\mu \in \PHS \setminus \{\delta_0\}$, the function $c \in [0,1] \mapsto [\mu]_c^c$ 
is log-concave.
\end{proposition}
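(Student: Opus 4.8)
The plan is to write $\log\big([\mu]_c^c\big)=c\log[\mu]_c$ as an infimum of functions of $c$ that are jointly concave, and then invoke the fact that the infimum of concave functions is concave. The natural candidate comes from the alternative formula \eqref{e.oldstyle} in \cref{p.oldstyle}: for $0<c\le 1$ and $\mu\in\PHS$,
\begin{equation}
c\log[\mu]_c = \inf_{r>0}\left\{ c\log B(c) - (1-c)\log r + \int \log(x+r)\,d\mu(x)\right\}\, .
\end{equation}
So it suffices to show that for each fixed $r>0$ the function
\begin{equation}
g_r(c)\coloneqq c\log B(c) - (1-c)\log r + \int \log(x+r)\,d\mu(x)
\end{equation}
is concave on $(0,1]$, since the last integral term is a constant (it is finite because $\mu\in\PHS$), the term $-(1-c)\log r$ is affine in $c$, and the infimum of a family of concave functions is concave. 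Then I would check that the resulting concave function on $(0,1]$ extends concavely to the closed interval $[0,1]$, using continuity: by \cref{p.HS0}, $[\mu]_0^0=1$, and one checks $\lim_{c\to 0^+} c\log[\mu]_c = 0 = 0\cdot\log[\mu]_0$ (this uses $\mu\neq\delta_0$ only to rule out the degenerate situation; even when $[\mu]_0=\infty$ the product tends to $0$ since $c\log B(c)\to 0$ and $c\int\log(x+r)\,d\mu\to 0$ for a fixed $r$ with finite integral), and a concave function on $(0,1]$ with a finite limit at the endpoint extends to a concave function on $[0,1]$.

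The only real computation is the concavity of $c\mapsto c\log B(c)$ on $(0,1)$. From \eqref{e.def_B}, for $0<c<1$,
\begin{equation}
c\log B(c) = c\log c + (1-c)\log(1-c)\, ,
\end{equation}
which is (up to sign) the binary entropy function; its second derivative is $\tfrac1c+\tfrac1{1-c}>0$, so $c\log c+(1-c)\log(1-c)$ is convex — which is the wrong sign! So I must be more careful: the claim is about $\log\big([\mu]_c^c\big)$ being \emph{log-concave as a function of $c$}, i.e. concave; the term $c\log B(c)$ alone is convex, so concavity of $g_r$ cannot come from that term in isolation. This tells me the grouping above is not the right one, and the genuine obstacle is to organize the $r$-dependence so that convexity of $c\log B(c)$ is dominated. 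The fix is to not separate out the constant integral: instead substitute $r = s\cdot\frac{1-c}{c}$ (as in the proof of \cref{p.oldstyle}, reversing the manipulation that produced \eqref{e.oldstyle} from \eqref{e.def_HS}) and work directly with the kernel form $c\log[\mu]_c = \inf_{y>0}\int c\,K(x,y,c)\,d\mu(x)$. So the clean statement to prove is:

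\medskip
\noindent\textbf{Key Lemma.} \emph{For each fixed $x\ge 0$ and $y>0$, the function $c\in[0,1]\mapsto c\,K(x,y,c)$ is concave.}
\medskip

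Granting this, the proof concludes immediately: $c\,K(x,y,c)=\log(cy^{-1}x+1-c)+c\log y$ for $c>0$ and equals $0$ at $c=0$ (consistent with the limit), so $c\mapsto \int c\,K(x,y,c)\,d\mu(x)$ is concave as an average of concave functions (the lower bound $K(\cdot,y,c)\ge K(0,y,c)>-\infty$ for $c<1$, and semi-integrability at $c=1$, make the integral well-defined, and one passes the average through concavity via the integral version of Jensen/pointwise concavity), and $c\log[\mu]_c$, being the infimum over $y>0$ of these concave functions, is concave; restricting to $\mu\neq\delta_0$ is what guarantees $[\mu]_c$ is not identically $0$ on a subinterval so the statement is not vacuous/degenerate, and one handles the endpoints $c=0,1$ by the continuity/limit argument above.

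For the Key Lemma itself: fix $x,y$ and set $t\coloneqq y^{-1}x\ge 0$, $h(c)\coloneqq c\,K(x,y,c)=\log(ct+1-c)+c\log y$ for $c\in(0,1]$. Since $c\log y$ is affine in $c$, concavity of $h$ is equivalent to concavity of $c\mapsto\log(1+(t-1)c)$ on $(0,1)$; but $c\mapsto \log(1+ac)$ is concave for any constant $a$ (with $1+ac>0$ on the relevant range, which holds here since $ct+(1-c)>0$ for $c\in[0,1)$, and $=t\ge 0$ at $c=1$), because its second derivative is $-a^2/(1+ac)^2\le 0$. The endpoint value $h(0)=0$ matches $\lim_{c\to0^+}h(c)=0$, so $h$ extends continuously and concavely to $[0,1]$ (at $c=1$, if $t=0$ i.e. $x=0$ then $h(1)=-\infty$, which is fine for a concave function taking values in $[-\infty,\infty)$, matching that $K(0,y,1)=-\infty$). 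I expect the main obstacle to be purely organizational — choosing the $\inf_{y>0}\int c\,K\,d\mu$ representation rather than the $\inf_{r>0}$ one so that the "entropy" term $c\log c+(1-c)\log(1-c)$ is never isolated — plus the routine but slightly fussy bookkeeping at the two endpoints $c=0$ and $c=1$ and the justification that the integral of a pointwise-concave (in $c$) family is concave in $c$, which is immediate from linearity of the integral applied to the defining inequality $h(\lambda c_0+(1-\lambda)c_1)\ge \lambda h(c_0)+(1-\lambda)h(c_1)$ integrated against $\mu$.
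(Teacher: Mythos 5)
Your corrected argument — the Key Lemma that $c \mapsto c\,K(x,y,c)$ is concave on $[0,1]$ for each fixed $x \ge 0$, $y>0$, followed by integrating in $x$ against $\mu$ and taking the infimum over $y>0$ — is precisely the paper's proof, and it is correct; the initial $\inf_{r>0}$ decomposition that you rightly abandoned (because $c\log B(c)$ is convex) is not used in the paper either. The separate endpoint limit discussion is superfluous: since the kernel-level concavity holds on all of $[0,1]$ (with value $0$ at $c=0$ and possibly $-\infty$ at $c=1$), integration and infima preserve it there directly.
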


This \lcnamecref{p.conc_c} can be regarded as a version of Newton inequality, which says that for every $\underline{x} = (x_1, \dots, x_n)$, the function
\begin{equation}
k \in \{1,\dots, n\} \mapsto  [\sym_k(\underline{x})]^k
\end{equation}
is log-concave (see \cite[Theorem~51, p.~52]{HLP} or \cite[Theorem 1(1), p.~324]{Bullen}).

\begin{proof}[Proof of \cref{p.conc_c}]
Note the following trait of the HS kernel:
for all $x\ge 0$ and $y>0$, the function 
\begin{equation}
c \in [0,1] \mapsto c K(x,y,c) = c \log y + \log(cy^{-1} x + 1-c) \in [-\infty, +\infty)
\end{equation}
is concave.
Integrating over $x$ with respect to the given $\mu \in \PHS \setminus \{\delta_0\}$, and then taking infimum over $y$, we conclude that the function $c \in [0,1] \mapsto c \log [\mu]_c$ is concave, as we wanted to show.
\end{proof}

Recall from \cref{def.functional_HS} that the HS mean $[f \mid \P]_c$ of a function $f$ with respect to a probability measure $\P$ is simply the HS barycenter of the push-forward $f_* \P$. Let us now investigate this mean as a function of $f$. The same argument from the proof of \cref{p.conc_c} shows that $f \mapsto [f \mid \P]_c^c$ is log-concave. However, we are able to show more:

\begin{proposition}\label{p.conc_f}
Let $(\Omega, \cF, \P)$ be a probability space.
Let $F$ be the set of nonnegative measurable functions $f$ such that $\log(1+f) \in L^1(\P)$.
For every $c \in (0,1]$, the function $f \in F \mapsto [f \mid \P]_c^c$ is concave.
\end{proposition}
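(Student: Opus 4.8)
The plan is to reduce the statement to a pointwise convexity fact about the HS kernel, analogous to the proof of \cref{p.conc_c}, but carried out with respect to the function variable rather than the parameter. By \cref{def.HS} and \cref{def.functional_HS}, for $c>0$ we have
\begin{equation}
\log [f \mid \P]_c^c = c \log [f_*\P]_c = \inf_{y>0} \int \big( c\log y + \log(cy^{-1}f(\omega) + 1-c)\big)\, d\P(\omega) \, .
\end{equation}
For each fixed $y>0$ the integrand, viewed as a function of the real value $t = f(\omega) \ge 0$, is $c\log y + \log(cy^{-1}t + 1-c)$, which is concave in $t$ (it is $\log$ of an affine increasing function of $t$). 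Hence for fixed $y$ the functional $f \mapsto \int \big(c\log y + \log(cy^{-1}f+1-c)\big)\,d\P$ is concave on $F$. Taking the infimum over $y>0$ of a family of concave functionals yields a concave functional, so $f \mapsto c\log[f\mid\P]_c = \log[f\mid\P]_c^c$ is concave. But we want $[f\mid\P]_c^c$ itself — which is the exponential of this — to be concave, and that is a strictly stronger conclusion, since $\exp$ is convex, not concave; this is precisely the ``more'' the proposition claims over the easy $\log$-concavity. So the genuine content is upgrading log-concavity to concavity, and this is where I expect the real work.

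The key idea for the upgrade is that each fixed-$y$ functional is not merely concave but is actually the composition of an affine map with a concave function in a way that interacts well with the exponential. Concretely, write $G_y(f) \coloneqq \exp \int \log(cy^{-1}f + 1 - c)\, d\P$, so that $[f\mid\P]_c^c = \inf_{y>0} y^c\, G_y(f)$. An infimum of concave functionals is concave, so it suffices to show that for each fixed $y>0$ the functional $f \mapsto G_y(f)$ is concave on $F$. Since $y^c$ is a positive constant, this gives the claim. To prove $G_y$ is concave, fix $f_0, f_1 \in F$ and $t \in [0,1]$, set $f_t \coloneqq (1-t) f_0 + t f_1$, and one must show $G_y(f_t) \ge (1-t) G_y(f_0) + t G_y(f_1)$. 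Writing $g_i(\omega) \coloneqq cy^{-1} f_i(\omega) + 1-c$ (a positive function, affine in $f_i$, with $g_t = (1-t)g_0 + t g_1$), this is the statement
\begin{equation}
\exp \int \log\big((1-t) g_0 + t g_1\big)\, d\P \ \ge\ (1-t) \exp\!\int \log g_0 \, d\P \ +\ t \exp\!\int \log g_1\, d\P \, .
\end{equation}
This is a known inequality: the functional $g \mapsto \exp \int \log g\, d\P$, i.e. the geometric barycenter, is concave on the cone of positive integrable-log functions. I would prove it directly. Let $a_i \coloneqq \exp \int \log g_i\, d\P$ for $i=0,1$ (finite and positive since $g_i \ge 1-c > 0$ and $\log(1+f_i)\in L^1$, hence $\log g_i \in L^1$). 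Then $\frac{g_i}{a_i}$ has $\int \log(g_i/a_i)\, d\P = 0$, and by the weighted arithmetic–geometric mean inequality applied pointwise, for $\lambda(\omega) \coloneqq \frac{(1-t)a_0}{(1-t)a_0 + t a_1}$ (a constant in $\omega$ here, which simplifies matters),
\begin{equation}
\frac{g_t(\omega)}{(1-t)a_0 + t a_1} = \lambda \frac{g_0(\omega)}{a_0} + (1-\lambda)\frac{g_1(\omega)}{a_1} \ge \left(\frac{g_0(\omega)}{a_0}\right)^{\!\lambda}\left(\frac{g_1(\omega)}{a_1}\right)^{\!1-\lambda} .
\end{equation}
Taking $\log$, integrating against $\P$, and using $\int \log(g_i/a_i)\,d\P = 0$ gives $\int \log \frac{g_t}{(1-t)a_0+ta_1}\, d\P \ge 0$, i.e. $\exp\int\log g_t\,d\P \ge (1-t)a_0 + t a_1$, which is exactly what is needed.

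So the order of steps is: (1) reduce to $c \in (0,1]$ and the case $c=1$ to the geometric barycenter directly; (2) express $[f\mid\P]_c^c = \inf_{y>0} y^c G_y(f)$ and reduce to concavity of each $G_y$; (3) prove the geometric-barycenter concavity inequality above via the pointwise AM–GM trick with the cleverly chosen constant weight $\lambda$; (4) conclude by taking the infimum over $y$, noting that an infimum of concave functions on the convex set $F$ is concave (and $F$ is indeed convex, since $\log(1+\cdot)$ is concave and increasing so $\log(1+f_t) \le \log(1+\max(f_0,f_1)) \le \log(1+f_0) + \log(1+f_1) \in L^1$). The main obstacle is step (3) — recognizing that the key is concavity of the geometric barycenter functional and finding the right constant weight $\lambda$ so that the AM–GM estimate integrates to the desired inequality; everything else is bookkeeping. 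One should also double-check the degenerate cases where $a_0$ or $a_1$ could be handled uniformly, and that infinite values of $[f\mid\P]_c$ (when $f \notin \PHS$-type integrability fails) are excluded by the hypothesis $\log(1+f)\in L^1(\P)$, so all quantities in play are finite.
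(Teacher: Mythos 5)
Your proposal is correct and follows essentially the same route as the paper: fix $y$, write $[f\mid\P]_c^c=\inf_{y>0} y^c \exp\int\log(cy^{-1}f+1-c)\,d\P$, establish concavity in $f$ of each fixed-$y$ functional, and conclude by taking the infimum over $y$. The only difference is cosmetic: the paper gets the fixed-$y$ concavity from the variational identity $\exp\int\log g\,d\P=\inf_{h}\int gh\,d\P\big/\exp\int\log h\,d\P$ (an infimum of functionals affine in $f$), whereas you prove the same concavity of the geometric barycenter directly by a pointwise weighted AM--GM with the constant weight $\lambda$ — both are reformulations of the arithmetic--geometric mean inequality, and your handling of the $c=1$ and degenerate cases matches the level of detail the paper leaves to the reader.
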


This is a consequence of the fact that $[f \mid \P]_c^c$ is a functional scaling mean (see \cite{BIP}), but for the convenience of the reader we provide a self-contained proof.
We start with the following observation:
if $G$ is the set of positive measurable functions $g$ such that $\log g \in L^1(\P)$, then for all $g \in G$,
\begin{equation}\label{e.gm_trick}
\exp \int \log g \, d\P = \inf_{h \in G} \frac{\int g h \, d\P}{\exp \int \log h \, d\P} \, ,
\end{equation}
with the infimum being attained at $h = 1/g$.
Indeed, this is just a reformulation of the inequality between arithmetic and geometric means.

\begin{proof}[Proof of \cref{p.conc_f}]
Let us first consider the case $c \in (0,1)$.
For every fixed value of $y>0$, the function 
\begin{equation}
g(\omega) \coloneqq \exp( c K(f(\omega) ,y,c) )  =  y^c \big( cy^{-1} f(\omega) + 1 - c \big) 
\end{equation}
belongs to the set $G$ defined above. 
Using identity \eqref{e.gm_trick}, 
\begin{equation}
\exp \int c K(f(\omega) ,y,c) \, d\P(\omega) = \inf_{h \in G}  \frac{\int y^c ( cy^{-1} f(\omega) + 1 - c )  h(\omega) \, d\P(\omega)}{\exp \int \log h \, d\P} \, .
\end{equation}
Consider this expression as a function of $f$; since it is an infimum of affine functions, it is concave.
Taking the infimum over $y>0$, we conclude that $f \mapsto [f \mid \P]_c^c$ is concave, as claimed. 

The proof of the remaining case $c=1$ is similar, but then we need to extend identity \eqref{e.gm_trick} to functions $g$ in $F$; we leave the details for the reader. 
\end{proof}

\subsection{Finer results}\label{ss.concavity_finer}

For the remaining of this section, we assume that the parameter $c$ is in the range $0<c<1$.
Let us consider  the HS barycenter as a function of the measure again.
We define the \emph{subcritical locus} as the following (convex) subset of $\PHS$:
\begin{equation}\label{e.sc}
\cS_c \coloneqq \big\{\mu \in \PHS \st \mu(0) < 1-c \big\} \, .
\end{equation}
The function $[\mathord{\cdot}]_c$ restricted to the subcritical locus is well-behaved. It is analytic, in a sense that we will make precise below. 
By \cref{p.conc_mu}, this function is log-concave.
Nevertheless, we will show that it is \emph{not} strictly log-concave.

\smallskip

Let us begin with an abstract definition.

\begin{definition}\label{def.qa}
A real-valued function $f$ defined on a convex subset $C$ of a real vector space is called \emph{quasi-affine} if, for all $x$, $y \in C$,
\begin{equation}
f \left( [x,y] \right) \subseteq [f(x),f(y)] \, ,
\end{equation}
where $[x,y] \coloneqq \{(1-t)x+ty \st 0 \le t \le 1\}$, and the right-hand side is the interval with extremes $f(x)$, $f(y)$, independently of their order.
\end{definition}
The explanation for the terminology is that quasi-affine functions are exactly those that are simultaneouly quasiconcave and quasiconvex (for the latter concepts see e.g.\ \cite[Chapter~3]{ADSZ}).
Note that the level sets of a quasi-affine function are convex.

For $\mu$ in the subcritical locus $\cS_c$, the HS barycenter $[\mu]_c$ can be computed using \cref{p.subcritical}, and this computation relies on finding the solution $\eta = \eta(\mu,c)$ of equation~\eqref{e.eta}. 
Since the integrand in \eqref{e.eta} is monotonic with respect to $\eta$, the function $\eta(\mathord{\cdot},c) \colon \cS_c \to \R_{\plus\plus}$ is quasi-affine. 
Concerning the barycenter itself, we have:

\begin{proposition}\label{p.strict}
Let $\mu_0$, $\mu_1 \in \cS_c$.
Then the restriction of the function $[\mathord{\cdot}]_c$ to the segment $[\mu_0,\mu_1]$ is 
log-affine if $\eta(\mu_0 , c) = \eta(\mu_1 , c)$, 
and is strictly log-concave otherwise.
\end{proposition}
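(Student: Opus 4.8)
The plan is to parametrize the segment as $\mu_t \coloneqq (1-t)\mu_0 + t\mu_1$ for $t \in [0,1]$, write $\eta_t \coloneqq \eta(\mu_t,c)$, and recall from \cref{p.subcritical} that
\begin{equation*}
\log[\mu_t]_c = \int K(x,\eta_t,c)\,d\mu_t(x) = \inf_{y>0} \int K(x,y,c)\,d\mu_t(x)\,,
\end{equation*}
with the infimum attained uniquely at $y=\eta_t$. First I would record the envelope-type fact that, since for each fixed $y$ the map $t \mapsto \int K(x,y,c)\,d\mu_t$ is affine, the function $t \mapsto \log[\mu_t]_c$ is an infimum of affine functions, hence concave; this is just a restatement of \cref{p.conc_mu}. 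The point is now to decide when it is \emph{strictly} concave versus affine along the segment.

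For the affine case, suppose $\eta(\mu_0,c) = \eta(\mu_1,c) =: \eta^*$. Because $\eta(\mathord{\cdot},c)$ is quasi-affine on $\cS_c$ (noted just before the proposition, using monotonicity of the integrand in \eqref{e.eta}), and its values at the two endpoints coincide, its value is constant equal to $\eta^*$ along the whole segment; more directly, one checks that $y=\eta^*$ solves \eqref{e.eta} for every $\mu_t$ by affineness of $\mu \mapsto \int \frac{x}{cx+(1-c)\eta^*}\,d\mu$. Then $\log[\mu_t]_c = \int K(x,\eta^*,c)\,d\mu_t(x)$ is an affine function of $t$, so the restriction is log-affine.

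For the strictly log-concave case, assume $\eta(\mu_0,c) \neq \eta(\mu_1,c)$. I would argue by contradiction: if $t \mapsto \log[\mu_t]_c$ were affine (equivalently, not strictly concave) on some subinterval, pick two parameters $s \neq s'$ in that subinterval where the graph agrees with its chord. For any $y>0$ the affine function $t \mapsto \int K(x,y,c)\,d\mu_t$ lies above $\log[\mu_t]_c$ and touches it at $t = \eta^{-1}(\cdot)$'s argument; evaluating at $y = \eta_s$ gives an affine function touching the (now affine) $\log[\mu_t]_c$ at $t=s$. Two affine functions of one variable that touch at a point and satisfy $\ge$ everywhere must be equal, so $\int K(x,\eta_s,c)\,d\mu_t = \log[\mu_t]_c$ for all $t$ in the subinterval; but the infimum defining $\log[\mu_{s'}]_c$ is attained \emph{uniquely} at $y=\eta_{s'}$, forcing $\eta_s = \eta_{s'}$. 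Running this for $s,s'$ ranging over the subinterval shows $\eta_t$ is locally constant, hence constant, on $[0,1]$ (using that $\eta(\mathord{\cdot},c)$ is quasi-affine, so its level sets are the convex subsets of the segment, and a locally constant quasi-affine function on an interval is constant). This contradicts $\eta(\mu_0,c)\neq\eta(\mu_1,c)$.

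The main obstacle I anticipate is making the "two affine functions touching from above must coincide" step genuinely bite: one must be careful that $\log[\mu_t]_c$ is finite on the relevant subinterval (it is, since each $\mu_t \in \cS_c \subseteq \PHS$ gives $[\mu_t]_c \in (0,\infty)$ by \cref{p.subcritical}, using $\mu_t(0) \le \max(\mu_0(0),\mu_1(0)) < 1-c$ by convexity of $\cS_c$), and that strict concavity is equivalent to failure of affineness on every subinterval — i.e., a concave function on an interval is either strictly concave or affine on some nondegenerate subinterval. Handling this dichotomy cleanly, and then propagating the conclusion $\eta_s = \eta_{s'}$ from a subinterval to the endpoints $\mu_0,\mu_1$ via quasi-affineness, is the delicate part; the rest is bookkeeping with the variational formula and differentiation under the integral sign already justified in \cref{p.subcritical}.
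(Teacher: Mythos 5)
Your treatment of the log-affine case is correct and essentially the paper's, and your second half genuinely departs from the paper's route: the paper proves that $t\mapsto\eta(\mu(t),c)$ and $t\mapsto\log[\mu(t)]_c$ are analytic and computes the second derivative explicitly (\cref{l.2nd_derivative}), concluding via the finiteness of zeros of $\dot{\eta}$, whereas you use a variational touching argument. That argument does work up to a point: if $t\mapsto\log[\mu_t]_c$ is affine on a nondegenerate subinterval $[a,b]$, then for $s$ in the \emph{interior} of $[a,b]$ the affine function $t\mapsto\int K(x,\eta_s,c)\,d\mu_t$ dominates $\log[\mu_t]_c$ and touches it at $t=s$, so the two agree on $[a,b]$, and uniqueness of the minimizer in \cref{p.subcritical} forces $\eta_t\equiv\eta_s$ on $[a,b]$. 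The genuine gap is the final propagation: you assert that this makes $\eta_t$ ``locally constant, hence constant, on $[0,1]$'', but your touching argument says nothing about $t\notin[a,b]$, where $\log[\mu_t]_c$ need not be affine, so local constancy outside $[a,b]$ is not established; and quasi-affineness of $\eta(\mathord{\cdot},c)$ only gives convexity of its level sets, which lets you interpolate \emph{between} $\mu_s$ and $\mu_{s'}$ but never extrapolate beyond them to the endpoints $\mu_0,\mu_1$ of the longer segment. As written you have only contradicted ``$\eta$ nonconstant on $[a,b]$'', not the hypothesis $\eta(\mu_0,c)\neq\eta(\mu_1,c)$; a priori $\eta$ could be constant on $[a,b]$ and still differ at the endpoints.

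The repair is short and is already implicit in your own argument for the log-affine case: the level sets of $\eta(\mathord{\cdot},c)$ on $\cS_c$ are not merely convex but are traces of affine hyperplanes, since $\eta(\mu,c)=\eta^*$ is equivalent to the affine condition $\int \frac{x}{cx+(1-c)\eta^*}\,d\mu=1$ from \eqref{e.eta}. The function $t\mapsto\int \frac{x}{cx+(1-c)\eta^*}\,d\mu_t$ is affine in $t$ and equals $1$ at two distinct parameters $s\neq s'$ in $[a,b]$, hence equals $1$ for every $t\in[0,1]$; by the uniqueness statement of \cref{p.subcritical} applied to $\mu_0$ and $\mu_1$ this yields $\eta(\mu_0,c)=\eta(\mu_1,c)=\eta^*$, the desired contradiction. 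With that substitution your proof is complete, and it is a legitimately more elementary alternative to the paper's, trading the analyticity and second-derivative formula of \cref{l.2nd_derivative} for the envelope argument, at the cost of not producing the quantitative information \eqref{e.2nd_derivative} that the paper also uses to describe the Hessian of $\log[\mathord{\cdot}]_c$.
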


See \cref{f.level_sets}.

\begin{figure}[htb]
\includegraphics[width=.65\textwidth]{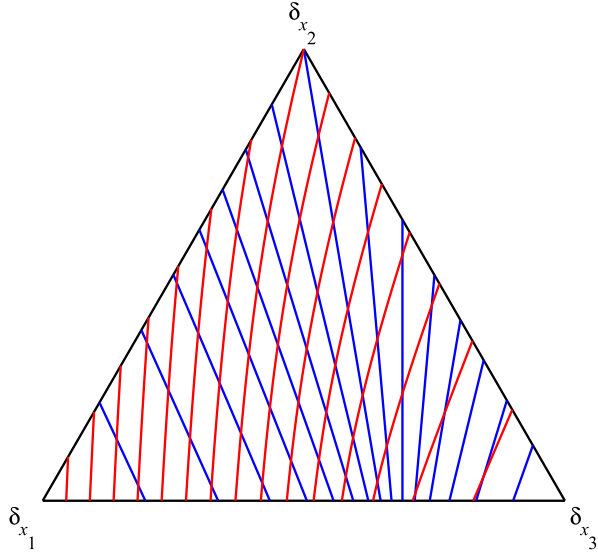}
\caption{The figure shows level curves of the functions $[\mathord{\cdot}]_c$ (in red) and $\eta(\mathord{\cdot},c)$ (in blue) on a $2$-simplex $\Delta\subset\PHS$ whose vertices are distinct delta measures $\delta_{x_1}$, $\delta_{x_2}$, $\delta_{x_3}$. Specifically, we took $c = 2/3$, $x_1 = 1$, $x_2 = 2^4$, $x_3 = 2^8$, and the plotted levels for each function are $2^{0.5}$, $2$, \dots, $2^{7.5}$. The function $\eta(\mathord{\cdot},c)$ is quasi-affine, so the blue level curves are straight segments. The HS barycenter $[\mathord{\cdot}]_c$ is log-affine along each level curve of $\eta(\mathord{\cdot},c)$, and so each blue segment is cut by the red curves into subsegments of equal size (except for the extremes). On the other hand, the HS barycenter $[\mathord{\cdot}]_c$ restricted to any segment $S$ not contained  on a level set of $\eta(\mathord{\cdot},c)$ is strictly log-concave.}\label{f.level_sets}
\end{figure}

\begin{proof}
As observed above, the function $\eta(\mathord{\cdot},c)$ on $\cS_c$ is quasi-affine, and in particular its level sets are convex. As a consequence of \eqref{e.sub_conclusion}, along each level set of $\eta(\mathord{\cdot},c)$, the function $[\mathord{\cdot}]_c$ is log-affine, and so not strictly log-concave there. This proves the first part of the \lcnamecref{p.strict}.

To prove the second part, consider $\mu_0$, $\mu_1 \in \PHS$ such that $\eta(\mu_0 , c) \neq \eta(\mu_1 , c)$, 
and parametrize the segment $[\mu_0,\mu_1]$ by $\mu(t) \coloneqq (1-t)\mu_0+t\mu_1$, $t \in [0,1]$. Then, \cref{l.2nd_derivative} below ensures that the second derivative of the function $t \mapsto \log [\mu]_c$ is nonpositive and vanishes at finitely many points (if any). So the function $t \mapsto [\mu(t)]_c$ is strictly log-concave.
This proves \cref{p.strict}, modulo the \lcnamecref{l.2nd_derivative}.
\end{proof}

\begin{lemma}\label{l.2nd_derivative}
Suppose $I \subset \R$ is an interval and $t \in I \mapsto \mu(t) \in \cS_c$ is an affine mapping.
Write $\mu = \mu(t)$, $\eta = \eta (\mu(t), c)$.
Then $\eta$ and $[\mu]_c$ are analytic functions of~$t$.
Furthermore, letting dot denote derivative with respect to~$t$, the following formula holds:
\begin{equation}\label{e.2nd_derivative}
(\log [\mu]_c)\ddot{\ } = - \frac{(1-c)\dot{\eta}^2}{\eta} \int \frac{x \, d \mu (x) } {\left( cx + (1-c) \eta \right)^2} \, .
\end{equation}
\end{lemma}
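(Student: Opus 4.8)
The statement has three parts: (i) analyticity of $\eta = \eta(\mu(t),c)$ in $t$; (ii) analyticity of $[\mu(t)]_c$ in $t$; (iii) the explicit second-derivative formula \eqref{e.2nd_derivative}. The natural tool for (i) is the analytic implicit function theorem applied to the defining equation \eqref{e.eta}, rewritten as $\Phi(t,\eta) \coloneqq \int \tfrac{x}{cx+(1-c)\eta}\, d\mu(t)(x) - 1 = 0$. Since $t \mapsto \mu(t)$ is affine, for fixed $\eta$ the map $t \mapsto \Phi(t,\eta)$ is affine, hence analytic; for fixed $t$ the integrand is analytic in $\eta$ on a neighborhood of $\R_{\plus\plus}$ (the denominator $cx+(1-c)\eta$ stays bounded away from $0$ uniformly in $x\ge 0$ when $\eta$ ranges in a compact subset of $\R_{\plus\plus}$), and one may differentiate under the integral sign because the integrand and its $\eta$-derivatives are uniformly bounded. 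So $\Phi$ is (real-)analytic jointly, and $\partial_\eta \Phi(t,\eta) = -(1-c)\int \tfrac{x}{(cx+(1-c)\eta)^2}\,d\mu(t) < 0$ since $\mu(t) \ne \delta_0$ (it lies in $\cS_c$). The analytic implicit function theorem then gives that $\eta(t)$ is analytic.

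Next, let $\Psi(t) \coloneqq \log[\mu(t)]_c = \int K(x,\eta(t),c)\,d\mu(t)(x)$, using \eqref{e.sub_conclusion}. Writing $\mu(t) = (1-t)\mu_0 + t\mu_1$, this is $\int K(x,\eta(t),c)\,d((1-t)\mu_0+t\mu_1)(x)$, which is analytic in $t$ by composition (the integrals $\int K(x,\eta,c)\,d\mu_i(x)$ are analytic in $\eta$, again by differentiation under the integral sign, valid since $K_y$ and its $y$-derivatives are bounded for $\eta$ in a compact subset of $\R_{\plus\plus}$). For the derivatives: write $\dot{\mu}$ for the (constant, signed) measure $\mu_1 - \mu_0$, so $\tfrac{d}{dt}\int g\,d\mu(t) = \int g\,d\dot\mu$ for any fixed bounded $g$. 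Then
\begin{equation}
\dot\Psi = \int K_y(x,\eta,c)\,d\mu(t)(x)\cdot \dot\eta + \int K(x,\eta,c)\,d\dot\mu(x) = \int K(x,\eta,c)\,d\dot\mu(x),
\end{equation}
because the first term vanishes by the defining equation \eqref{e.eta_abstract}. Differentiating once more,
\begin{equation}
\ddot\Psi = \int K_y(x,\eta,c)\,d\dot\mu(x)\cdot\dot\eta.
\end{equation}
So it remains to evaluate $\int K_y(x,\eta,c)\,d\dot\mu(x)$ and relate it to $\dot\eta$.

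**Closing the computation.** Differentiate the identity $\int K_y(x,\eta(t),c)\,d\mu(t)(x) = 0$ in $t$:
\begin{equation}
\int K_{yy}(x,\eta,c)\,d\mu(t)(x)\cdot\dot\eta + \int K_y(x,\eta,c)\,d\dot\mu(x) = 0,
\end{equation}
hence $\int K_y(x,\eta,c)\,d\dot\mu(x) = -\dot\eta\int K_{yy}(x,\eta,c)\,d\mu(t)(x)$, and therefore
\begin{equation}
\ddot\Psi = -\dot\eta^2 \int K_{yy}(x,\eta,c)\,d\mu(x).
\end{equation}
Now compute $K_{yy}$ from \eqref{e.Delta}: $K_y = \Delta(x,y)/y$ with $\Delta(x,y) = 1 - \tfrac{x}{cx+(1-c)y}$, so $K_{yy} = \tfrac{\Delta_y}{y} - \tfrac{\Delta}{y^2}$; but along the curve $\int \Delta(x,\eta)\,d\mu = 0$, so $\int \tfrac{\Delta}{y^2}\,d\mu$ evaluated at $y=\eta$ contributes nothing, leaving $\int K_{yy}\,d\mu = \tfrac{1}{\eta}\int \Delta_y(x,\eta)\,d\mu = \tfrac{1}{\eta}\int \tfrac{(1-c)x}{(cx+(1-c)\eta)^2}\,d\mu$ using \eqref{e.Delta_y}. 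Substituting gives exactly \eqref{e.2nd_derivative}. The only real obstacle is bookkeeping — making sure differentiation under the integral sign is justified (uniform boundedness of the relevant integrands on compact $\eta$-ranges, which holds because all the functions $K$, $K_y$, $K_{yy}$ and their $\eta$-derivatives are bounded in $x \ge 0$ once $\eta$ is pinned to a compact subset of $\R_{\plus\plus}$, and $\eta(t)$ stays in such a set locally by continuity) and being careful that $\mu(t)$ remains in $\cS_c$ so that $\eta(t)$ is genuinely defined and positive. None of this is deep; the computation is the substance.
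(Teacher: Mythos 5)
Your proposal is correct and follows essentially the same route as the paper: the analytic implicit function theorem applied to the defining equation \eqref{e.eta} (equivalently $\psi=0$) for analyticity of $\eta$ and hence of $[\mu]_c$, then the identical chain of differentiations using \eqref{e.eta_abstract} and $\ddot\mu=0$, with your $K_{yy}=\Delta_y/y-\Delta/y^2$ manipulation being a trivial rearrangement of the paper's $\Delta_y=yK_{yy}+K_y$ step. The only cosmetic difference is that the paper explicitly extends $t\mapsto\mu(t)$ to signed measures on an open interval $J\supset I$ before invoking analyticity, which your parametrization handles implicitly.
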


The integral is strictly positive (since $\mu \neq \delta_0)$, so formula \eqref{e.2nd_derivative} tells us that, at any point $\mu$ in $\cS_c$, the Hessian of the function $\log [ \mathord{\cdot}]_c$ is negative semidefinite (not a surprise, given \cref{p.conc_mu}), and has the same kernel as the derivative of the function $\eta( \mathord{\cdot},c)$ at the same point.

\begin{proof}[Proof of \cref{l.2nd_derivative}]
Let us omit the parameter $c$ in the formulas, so $K(x,y) = K(x,y,c)$. 
As in the proof of \cref{p.subcritical}, we consider the following functions:
\begin{equation}
\Delta(x,y) \coloneqq y K_y(x,y) \quad \text{and} \quad
\psi(t,y) \coloneqq \int \Delta(x,y) \, d \mu^{(t)}(x) \, .
\end{equation}
(We temporarily denote $\mu(t)$ by $\mu^{(t)}$.)
Then $\eta(t) =  \eta (\mu^{(t)}, c)$ is defined implicitly by $\psi(t,\eta(t)) = 0$, for all $t \in I$.
The mapping $t \in I \mapsto \mu^{(t)}$ can be extended uniquely to an affine mapping on $\R$ whose values are signed measures. 
Inspecting the proof of \cref{p.subcritical}, we see that $\eta(t)$ is well-defined for all $t$ in an open interval $J \supset I$.

The partial derivative $\Delta_y$ was computed before \eqref{e.Delta_y}, and satisfies the bounds $0 \le \Delta_y \le (2cy)^{-1}$. So we can differentiate under the integral sign and write $\psi_y(t,y) = \int \Delta_y(x,y) \, d \mu^{(t)}(x)$.
This derivative is positive, since $\Delta_y(x,y)>0$ for all $x>0$ and $\mu^{(t)} \neq \delta_0$. 
Therefore, since $\psi$ is an analytic function on the domain $J \times \R_{\plus\plus}$, the inverse function theorem ensures that $\eta$ is analytic on $J$.
In particular, $[\mu^{(t)}]_c =  \exp \int K(x,\eta(t)) \, d \mu^{(t)}(x)$ is analytic as well, as claimed.


Now we want to prove \eqref{e.2nd_derivative}; in that formula and in the following calculations, we omit the dependence on $t$.
First, we differentiate $L \coloneqq \log [\mu]_c =  \int K(x,\eta) \, d\mu$ with respect to $t$:
\begin{equation}
\dot{L} = \dot{\eta} \int K_y(x,\eta) \, d\mu + \int K(x,\eta) \, d\dot\mu 
\end{equation}
(where $\dot \mu \coloneqq \frac{d}{dt} \mu(t)$ is a signed measure), which by \eqref{e.eta_abstract} simplifies to:
\begin{equation}\label{e.L_dot}
\dot{L} = \int K(x,\eta) \, d\dot\mu  \, .
\end{equation}
Differentiating again, and using that $\ddot{\mu} = 0$ (since $t \mapsto \mu(t)$ is affine), we obtain:
\begin{equation}\label{e.L_dot_dot_1}
\ddot{L} = \dot{\eta} \int K_y(x,\eta) \, d\dot\mu  \, .
\end{equation}
On the other hand, differentiating \eqref{e.eta_abstract},
\begin{equation}
\dot{\eta} \int K_{yy} (x,\eta) \, d \mu + \int K_y(x,\eta) \, d\dot\mu = 0 \, ,
\end{equation}
so \eqref{e.L_dot_dot_1} can be rewritten as:
\begin{equation}\label{e.L_dot_dot_2}
\ddot{L} = -\dot{\eta}^2 \int K_{yy} (x,\eta) \, d \mu \, .
\end{equation}
Let us transform this expression.
Consider the function $\Delta \coloneqq y K_y$, which was introduced before in \eqref{e.Delta}.
Since $\Delta_y = y K_{yy} + K_y$, using \eqref{e.eta_abstract} once again, we obtain $\int \Delta_y(x,\eta) \, d\mu = \eta \int K_{yy} (x,\eta) \, d \mu$.
So equation \eqref{e.L_dot_dot_2} becomes:
\begin{equation}\label{e.L_dot_dot_3}
\ddot{L} = -\frac{\dot{\eta}^2}{\eta} \int \Delta_y (x,\eta) \, d \mu \, .
\end{equation}
Substituting the expression of $\Delta_y$ given by \eqref{e.Delta_y}, we obtain \eqref{e.2nd_derivative}.\footnote{Incidentally, note that it is not true that $K_{yy} \ge 0$ everywhere ($K$ is not a convex function of~$y$), so formula \eqref{e.L_dot_dot_2} by itself is not as useful as the final formulas \eqref{e.L_dot_dot_3} and \eqref{e.2nd_derivative}.}
\end{proof}


For $c=0$ or $c=1$, the barycenter $[\mathord{\cdot}]_c$ is a quasi-affine function.
On the other hand, an inspection of \cref{f.level_sets} shows that this is not true for $c=2/3$, at least, since the level sets are slightly bent. 
Using \cref{p.strict}, we will formally prove:

\begin{proposition}\label{p.HS_not_qa}
If $c \in (0,1)$, then the function $[\mathord{\cdot}]_c$ is \emph{not} quasi-affine. 
\end{proposition}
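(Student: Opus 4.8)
The plan is to leverage \cref{p.strict}, which is tailor-made for this: a quasi-affine function must be constant on every segment joining two points with equal values, whereas \cref{p.strict} exhibits segments on which $[\mathord{\cdot}]_c$ is \emph{strictly} log-concave. Hence it suffices to produce two measures $\mu_0,\mu_1$ in the subcritical locus $\cS_c$ with $[\mu_0]_c=[\mu_1]_c$ but $\eta(\mu_0,c)\neq\eta(\mu_1,c)$. For such a pair, \cref{p.strict} says $t\mapsto\log[(1-t)\mu_0+t\mu_1]_c$ is strictly concave on $[0,1]$, so at $t=\tfrac12$ its value strictly exceeds $\tfrac12\log[\mu_0]_c+\tfrac12\log[\mu_1]_c$, i.e.\ $[\tfrac12\mu_0+\tfrac12\mu_1]_c$ lies strictly above the interval $[[\mu_0]_c,[\mu_1]_c]$, which is a single point; this contradicts \cref{def.qa}.

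To construct the pair economically, I would take one of the two measures to be a Dirac mass. If $v>0$, then $[\delta_v]_c=v$ by reflexivity (\cref{p.basic}), $\eta(\delta_v,c)=v$ directly from \eqref{e.eta} (since $c<1$), and $\delta_v\in\cS_c$ because $\delta_v(0)=0<1-c$. So it is enough to find a single $\mu_1\in\cS_c$ with $v\coloneqq[\mu_1]_c>0$ and $\eta(\mu_1,c)\neq v$, and then set $\mu_0\coloneqq\delta_v$.

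For $\mu_1$ I would use a Bernoulli measure $\mu_p\coloneqq(1-p)\delta_0+p\delta_1$ with $p\in(c,1)$, which lies in $\cS_c$ since $\mu_p(0)=1-p<1-c$. Equation \eqref{e.eta} gives at once $\eta(\mu_p,c)=(p-c)/(1-c)$, while \cref{ex.Bernoulli} gives $[\mu_p]_c=(1-c)^{(1-c)/c}\,p^{p/c}\,(p-c)^{(c-p)/c}>0$. A short algebraic manipulation reduces the equality $\eta(\mu_p,c)=[\mu_p]_c$ to $(1-c/p)^p=1-c$. The only genuine computation is to rule this out for $p\in(c,1)$: writing $\phi(p)\coloneqq p\log(1-c/p)$, one gets $\phi'(p)=\log(1-c/p)+\tfrac{c/p}{1-c/p}$, which is positive on $(c,1]$ because $h(u)\coloneqq\log(1-u)+\tfrac{u}{1-u}$ satisfies $h(0)=0$ and $h'(u)=u/(1-u)^2>0$; hence $\phi(p)<\phi(1)=\log(1-c)$, so $(1-c/p)^p<1-c$ and therefore $\eta(\mu_p,c)\neq[\mu_p]_c$ for every $p\in(c,1)$.

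Assembling: fix any $p\in(c,1)$ (say $p=(1+c)/2$), put $v\coloneqq[\mu_p]_c$, $\mu_1\coloneqq\mu_p$, $\mu_0\coloneqq\delta_v$; then $\mu_0,\mu_1\in\cS_c$, $[\mu_0]_c=[\mu_1]_c=v$, and $\eta(\mu_0,c)=v\neq\eta(\mu_1,c)$, so the argument of the first paragraph applies and shows $[\mathord{\cdot}]_c$ is not quasi-affine. I expect no serious obstacle here: the only slightly delicate points are the elementary inequality $(1-c/p)^p<1-c$ on $(c,1)$ and the (immediate) observation that quasi-affinity forces constancy between equal-valued endpoints; everything else is a direct appeal to \cref{p.strict}, \cref{ex.Bernoulli}, and \eqref{e.eta}.
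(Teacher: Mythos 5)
Your argument is correct, and its skeleton coincides with the paper's: exhibit $\mu_0,\mu_1\in\cS_c$ with $[\mu_0]_c=[\mu_1]_c$ but $\eta(\mu_0,c)\neq\eta(\mu_1,c)$ (one of them a Dirac mass at the common barycenter value), and then invoke \cref{p.strict} to get strict log-concavity along the segment, which is incompatible with \cref{def.qa}. The difference lies in how the crucial inequality $\eta(\mu,c)\neq[\mu]_c$ is certified. The paper does this abstractly for \emph{every} non-Dirac subcritical measure: if $\eta(\mu_0,c)=[\mu_0]_c$, then $[\mu_0]_c$ equals the DHS barycenter $\llbracket\mu_0\rrbracket_c$, and \cref{p.HS_leq_DHS} (equality only for Dirac measures when $0<c<1$) gives a contradiction; this is why the proof is deferred to the DHS section. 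You instead verify the inequality by hand for the Bernoulli family $\mu_p=(1-p)\delta_0+p\delta_1$, $p\in(c,1)$, computing $\eta(\mu_p,c)=(p-c)/(1-c)$ from \eqref{e.eta} and $[\mu_p]_c$ from \cref{ex.Bernoulli}, and reducing equality to $(1-c/p)^p=1-c$, which your monotonicity argument correctly rules out on $(c,1)$ (note the consistent boundary case $p=1$). What your route buys is independence from the DHS machinery of \cref{def.DHS,p.HS_leq_DHS} — the proof becomes self-contained given \cref{p.strict} and the explicit Bernoulli formulas — at the cost of an ad hoc elementary inequality; the paper's route is shorter once the DHS comparison is available and shows the failure of quasi-affinity is generic (any non-Dirac subcritical measure witnesses it), rather than hinging on one computed example.
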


A proof is given in the next section.

\section{A deviation barycenter related to the HS barycenter}\label{s.DHS}

There is a large class of means called \emph{deviation means}, which includes the class of quasiarithmetic means.
Let us recall the definition (see \cite{Daroczy,DaroczyPales}). Let $I \subset \R$ be an open interval. A \emph{deviation function} is a function $E \colon I \times I \to \R$ such that for all $x \in I$, the function $y \mapsto E(x, y)$ is continuous, strictly decreasing, and vanishes at $y=x$. 
Given $n$-tuples $\underline{x}=(x_1,\dots,x_n)$ and $\underline{w}=(w_1,\dots,w_n)$ with $x_i \in I$, $w_i \ge 0$, and $\sum_{i=1}^n w_i = 1$, the \emph{deviation mean} of $\underline{x}$ with weights $\underline{w}$ (with respect to the deviation function $E$) is defined as the unique solution $y \in I$ of the equation: 
\begin{equation}\label{e.def_dev_mean}
\sum_{i=1}^n w_i E(x_i, y) = 0 \, .
\end{equation}
In terms of the probability measure $\mu \coloneqq \sum_{i=1}^n w_i \delta_{x_i}$, this equation can be rewritten as:
\begin{equation}\label{e.def_dev_bar}
\int E(x,y) \, d\mu(x) = 0 \, .
\end{equation}
So it is reasonable to define the \emph{deviation barycenter} of an arbitrary probability $\mu \in \cP(I)$ (with respect to the deviation function $E$) as the solution $y$ of this equation. Of course, existence and uniqueness of such a solution may depend on measurability and integrability conditions, and we will not undertake this investigation here. Nevertheless, let us note that \emph{if $C \subseteq \cP(I)$ is a convex set of probability measures where the deviation barycenter is uniquely defined, then it is a quasi-affine function there.} Indeed, for each $\alpha \in I$, the corresponding upper level set is:
\begin{equation}
\left\{ \mu \in C \st \int E(x,\alpha) \, d\mu(x) \ge 0  \right\}
\end{equation}
and so it is convex; similarly for lower level sets.

\begin{remark}
Let us mention a related concept (see \cite{EM,AL} and references therein).
Let $M$ be a manifold endowed with an affine (e.g.\ Riemannian) connection for which the exponential maps $\exp_y \colon T_y M \to M$ are diffeomorphisms.
Given a probability measure $\mu \in \cP(M)$, 
a solution $y \in M$ of the equation
\begin{equation}\label{e.def_exp_bar}
\int \exp_y^{-1}(x) \, d\mu(x) = 0  
\end{equation}
is called an \emph{exponential barycenter} of $\mu$.
(For criteria of existence and uniqueness, see \cite{AL}.)
The similarity between equations \eqref{e.def_exp_bar} and \eqref{e.def_dev_bar} is evident.
Furthermore, like deviation barycenters, the level sets of exponential barycenters are convex. (Since $M$ has no order structure, it does not make sense to say that the exponential barycenter is quasi-affine.)
\end{remark}

We have mentioned that the HS barycenter with parameter $c \in (0,1)$ is not quasi-affine on the subcritical locus. Therefore HS barycenters are not a deviation barycenters, except for the extremal values of the parameter. Nevertheless, there exists a naturally related parametrized family of deviation barycenters, as we now explain.

Letting $K$ be the HS kernel (see \cref{def.kernel}), we let:
\begin{equation}\label{e.DHS_deviation}
E(x,y,c) \coloneqq K(x,y,c) -\log y = 
\begin{cases}
c^{-1} \log \left( cy^{-1}x + 1 - c \right) & \text{if } c>0, \\ 
y^{-1} x -1 & \text{if } c=0. 
\end{cases}
\end{equation}
For any value of the parameter $c \in [0,1]$, this is a deviation function, 
provided we restrict it to $x>0$. 
The corresponding deviation barycenter will be called the 
\emph{derived from Hal\'asz--Sz\'ekely barycenter} (or \emph{DHS barycenter}) with parameter $c$. 
More precisely: 

\begin{definition}\label{def.DHS}
Let $c \in [0,1]$ and $\mu \in \cP(\R_\plus)$.
If $c=1$, then we require that the function $\log x$ is semi-integrable with respect to $\mu$.
The \emph{DHS barycenter} with parameter $c$ of the probability measure $\mu$, denoted $\llbracket \mu \rrbracket_c $, is defined as follows:
\begin{enumerate}
\item\label{i.DHS_0} 
if $\mu = \delta_0$, or $c=1$ and $\int \log x \, d\mu(x) = -\infty$, then $\llbracket \mu \rrbracket_c \coloneqq 0$;
\item\label{i.DHS_infinite} 
if $c=0$ and $\int \log x \, d\mu(x) = \infty$, or $c>0$ and  $\int \log (1+x) \, d\mu(x) = \infty$, then $\llbracket \mu \rrbracket_c \coloneqq +\infty$;
\item\label{i.DHS_positive} 
in all other cases, $\llbracket \mu \rrbracket_c$ is defined at the unique positive and finite solution $y$ of the equation $\int E(x,y,c) \, d\mu(x) = 0$.
\end{enumerate}
\end{definition}
Of course, we need to show that the definition makes sense in case \eqref{i.DHS_positive}, i.e., that there exists a unique $y \in \R_{\plus\plus}$ such that  $\int E(x,y,c) \, d\mu(x) = 0$. This is obvious if $c=0$ or $c=1$, so assume that $c \in (0,1)$. Since $\log (1+x) \in L^1(\mu)$, the function $\phi(y) \coloneqq \int E(x,y,c) \, d\mu(x)$ is finite, and (by the dominated convergence theorem) continuous and strictly decreasing. Furthermore, $\phi(y)$ converges to $c^{-1} \log(1-c)<0$ as $y \to +\infty$ and (since $\mu \neq \delta_0)$ to $+\infty$ as $y \to 0^+$. So $\phi$ has a unique zero on $\R_{\plus\plus}$, as we wanted to prove. 

The DHS barycenters have the same basic properties as the HS barycenters (\cref{p.basic}); we leave the verification for the reader.\footnote{For example, monotonicity with respect to $c$ follows simply from the corresponding property of the deviation functions, so we do not need to use the finer comparison criteria from \cite{Daroczy,DaroczyPales}.}
Furthermore, we have  the  following inequality:

\begin{proposition}\label{p.HS_leq_DHS}
$[\mu]_c \le \llbracket \mu \rrbracket_c$.
The inequality is strict unless either $\mu$ is a delta measure, $c=0$, $c=1$, or $\int \log (1+x) \, d\mu(x) = \infty$.
\end{proposition}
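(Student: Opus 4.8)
The plan is to compare the two barycenters through their defining variational/implicit characterizations. Recall that, in the nondegenerate cases, $[\mu]_c = \exp \int K(x,\eta,c)\,d\mu(x)$ where $\eta = \eta(\mu,c)$ is the minimizer from \cref{p.subcritical} (or \cref{r.eta_other_cases} when $\mu\notin\PHS$), and that $[\mu]_c = \exp\inf_{y>0}\int K(x,y,c)\,d\mu(x)$ in general. On the other hand, $\llbracket\mu\rrbracket_c$ is the solution $y$ of $\int E(x,y,c)\,d\mu(x)=0$, i.e. of $\int K(x,y,c)\,d\mu(x) = \log y$. So the natural idea is: evaluate the defining infimum for $[\mu]_c$ at the particular point $y = \llbracket\mu\rrbracket_c =: \beta$. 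Then
\begin{equation}
\log[\mu]_c \le \int K(x,\beta,c)\,d\mu(x) = \log\beta,
\end{equation}
where the last equality is exactly the equation defining $\beta$ in case \eqref{i.DHS_positive} of \cref{def.DHS}. This immediately gives $[\mu]_c \le \llbracket\mu\rrbracket_c$. I would first dispose of the degenerate cases: if $\mu=\delta_0$ both sides are $0$ by reflexivity (\cref{p.basic}.\eqref{i.basic_reflex}); if $c=0$ both equal $\int x\,d\mu$ by \cref{p.HS0} and the definition of $E(x,y,0)$; if $c=1$ both equal $\exp\int\log x\,d\mu$; and if $\int\log(1+x)\,d\mu=\infty$ with $c>0$ then $[\mu]_c=\infty$ by \cref{p.HS_infinity} and $\llbracket\mu\rrbracket_c=+\infty$ by case \eqref{i.DHS_infinite}, so the (non-strict) inequality holds trivially. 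It also needs checking that whenever the case-\eqref{i.DHS_positive} definition of $\beta$ applies we indeed have $[\mu]_c<\infty$, which is \cref{p.HS_infinity} again, and that $\beta>0$ is finite, which is part of \cref{def.DHS}.\eqref{i.DHS_positive}; if instead $\llbracket\mu\rrbracket_c=0$ (case \eqref{i.DHS_0} with $c=1$, $\int\log x\,d\mu=-\infty$), then $[\mu]_1=0$ too, so equality.

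For the strictness claim, I would invoke the uniqueness of the minimizer. By \cref{p.subcritical}, in the subcritical case the infimum defining $[\mu]_c$ is attained \emph{uniquely} at $y=\eta(\mu,c)$, and the function $y\mapsto\int K(x,y,c)\,d\mu$ is strictly decreasing on $(0,\eta]$ and strictly increasing on $[\eta,+\infty)$ (this monotonicity is established inside the proof of \cref{p.subcritical} via $\psi$ strictly increasing). Hence $\log[\mu]_c < \int K(x,\beta,c)\,d\mu = \log\beta$ as soon as $\beta\neq\eta(\mu,c)$. So it remains to rule out $\beta = \eta(\mu,c)$ outside the listed exceptional cases. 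But $\eta$ solves $\int K_y(x,\eta,c)\,d\mu=0$, equivalently (by \eqref{e.eta}) $\int \frac{x}{cx+(1-c)\eta}\,d\mu=1$, whereas $\beta$ solves $\int K(x,\beta,c)\,d\mu=\log\beta$, i.e. $\int E(x,\beta,c)\,d\mu=0$. These two equations coincide only when $E(x,y,c)$ and $K_y(x,y,c)$ are ``the same'' in the relevant sense — which happens precisely when the measure is a delta: for $\mu=\delta_{x_0}$ both give $y=x_0$. Concretely, I would argue: suppose $\beta=\eta$; then both $\int K_y(x,\beta,c)\,d\mu=0$ and $\int K(x,\beta,c)\,d\mu=\log\beta = K(\beta,\beta,c)$ hold (using $K(\beta,\beta,c)=\log\beta$ from \cref{p.kernel}.\eqref{i.kernel_reflex}). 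The second says $\int [K(x,\beta,c)-K(\beta,\beta,c)]\,d\mu(x)=0$. Now I want to compare this with the first-order condition. The cleanest route: for fixed $y=\beta$, consider $g(x) := K(x,\beta,c)$; it is strictly concave in $x$ on $\R_{\plus\plus}$ when $0<c<1$ (from \eqref{e.def_kernel}, $g(x)=\log\beta + c^{-1}\log(c\beta^{-1}x+1-c)$, a strictly concave increasing function of $x$), and $K(\beta,\beta,c)=g(\beta)$ with $g'(\beta)$ related to $K_y$ via the identity used in \cref{l.2nd_derivative}. Then $\int[g(x)-g(\beta)]\,d\mu=0$ together with strict concavity of $g$ forces, by Jensen, $\int x\,d\mu = \beta$ and $\mu=\delta_\beta$ — wait, Jensen gives $\int g\,d\mu \le g(\int x\,d\mu)$ with equality iff $\mu$ is a point mass, but here we need the bound in terms of $g(\beta)$, not $g(\int x\,d\mu)$.

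The main obstacle, then, is precisely this last step — showing that $\beta=\eta$ forces $\mu$ to be a delta (given $0<c<1$ and $\mu\in\PHS$). I expect the correct argument uses the tangent-line bound: by concavity, $g(x) \le g(\beta) + g'(\beta)(x-\beta)$ for all $x$, with equality iff $x=\beta$. Integrating, $0 = \int[g(x)-g(\beta)]\,d\mu \le g'(\beta)\int(x-\beta)\,d\mu$, so $g'(\beta)\int(x-\beta)\,d\mu \ge 0$. Separately, the condition $\beta=\eta$, i.e. $\int K_y(x,\beta,c)\,d\mu=0$, unpacks via \eqref{e.Delta} to $\int \Delta(x,\beta)\,d\mu=0$, i.e. $\int\frac{x}{cx+(1-c)\beta}\,d\mu=1$. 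One then checks these two facts are incompatible unless $\mu(\{x\neq\beta\})=0$: a short computation comparing $\frac{x}{cx+(1-c)\beta}$ with its tangent-line value and with $\frac{x-\beta}{\beta}$ at $x=\beta$ should give the contradiction with strictness coming from the strict concavity/convexity of the relevant one-variable functions, the equality case being $\mu=\delta_\beta$. I would write this out as a self-contained one-variable lemma. (The hypothesis $\mu\in\PHS$, respectively $\int\log(1+x)\,d\mu<\infty$, is what makes all these integrals finite and legitimizes the manipulations; that is why it appears among the exceptions.) Once $\mu=\delta_\beta$ is forced, we are in the ``delta measure'' exceptional case, completing the proof of strictness.
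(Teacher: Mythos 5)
Your proof of the inequality itself is correct and is the same as the paper's: after disposing of the degenerate cases, evaluate the infimum defining $[\mu]_c$ at the point $y=\beta\coloneqq\llbracket\mu\rrbracket_c$ and use the defining equation $\int K(x,\beta,c)\,d\mu=\log\beta$. Your reduction of the strictness claim is also sound in outline: in the subcritical regime, equality forces $\beta=\eta(\mu,c)$, i.e.\ the two equations $\int K_y(x,\beta,c)\,d\mu=0$ and $\int K(x,\beta,c)\,d\mu=\log\beta$ hold simultaneously (and in the critical/supercritical regime the infimum in \eqref{e.def_HS} is not attained, so strictness is automatic — a case you should state explicitly, but it is immediate from \cref{p.critical_supercritical}).

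The genuine gap is exactly the step you flag: showing that these two equations force $\mu$ to be a delta measure. The route you sketch — tangent-line bounds at $x=\beta$ applied to each condition separately — cannot work, because both conditions linearize to the \emph{same} one-sided conclusion. Indeed, with $g(x)=K(x,\beta,c)$ you get $g(x)\le\log\beta+\beta^{-1}(x-\beta)$ and hence $\int x\,d\mu\ge\beta$; but $u(x)=\frac{x}{cx+(1-c)\beta}$ is also concave with $u(\beta)=1$ and $u'(\beta)=\frac{1-c}{\beta}$, so its tangent-line bound gives $1=\int u\,d\mu\le 1+\frac{1-c}{\beta}\bigl(\int x\,d\mu-\beta\bigr)$, i.e.\ again $\int x\,d\mu\ge\beta$. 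These are perfectly compatible, so no contradiction can come out of the linearized facts; the two equations must be combined nonlinearly. The missing one-variable inequality is $\log t\ge 1-t^{-1}$ (equality iff $t=1$), applied to $t=\frac{cx+(1-c)\beta}{\beta}$: the DHS equation says $\int\log t\,d\mu=0$, while the $\eta$-equation, rewritten using $c\,\frac{x}{cx+(1-c)\beta}+(1-c)\,\frac{\beta}{cx+(1-c)\beta}=1$, says $\int\frac{\beta}{cx+(1-c)\beta}\,d\mu=1$, i.e.\ $\int(1-t^{-1})\,d\mu=0$; integrating the pointwise inequality then forces $t=1$ $\mu$-a.e., i.e.\ $\mu=\delta_\beta$. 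This is precisely the paper's argument, phrased there as equality in Jensen's inequality for $h(x)=\frac{\beta}{cx+(1-c)\beta}$, with $\int h\,d\mu=1$ and $\int\log h\,d\mu=0$. With that lemma inserted, your proof closes and coincides with the paper's.
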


\begin{proof}
Let $c \in [0,1]$ and $\mu \in \cP(\R_\plus)$.
If $\llbracket \mu \rrbracket_c$ is either $0$ or $\infty$, then it is clear from \cref{def.DHS} and basic properties of the HS barycenter that $[\mu]_c = \llbracket \mu \rrbracket_c$.
So assume that $\llbracket \mu \rrbracket_c \eqqcolon \xi$ is neither $0$ nor $\infty$. Then it satisfies the equation $\int E(x,\xi,c) \, d\mu(x) = 0$, or equivalently $\int K(x,\xi,c) \, d\mu(x) = \log \xi$. Considering $y = \xi$ in the definition \eqref{e.def_HS}, we obtain $[\mu]_c \le \xi$, as claimed.

Let us investigate the cases of equality. 
It is clear that $\llbracket \mathord{\cdot} \rrbracket_0$ and $\llbracket \mathord{\cdot} \rrbracket_1$ are the arithmetic and geometric barycenters, respectively, and so coincide with the corresponding HS barycenters.
Also, if $\int \log (1+x) \, d\mu(x) = \infty$, then $[\mu]_c = \llbracket \mu \rrbracket_c = \infty$. 
So consider $c \in (0,1)$ and $\mu \in \PHS$ such that $[\mu]_c = \llbracket \mu \rrbracket_c \eqqcolon \xi$.
The infimum in formula \eqref{e.def_HS} is attained at $y = \xi$, and thus (see \cref{p.critical_supercritical}) we are in the subcritical regime $\mu(0)<1-c$. 
Hence equation \eqref{e.eta} holds with $\eta = \xi$. 
Note that the equation can be rewritten as:
\begin{equation}
\int \frac{\eta}{cx+(1-c)\eta} \, d \mu(x) = 1 \, .
\end{equation}
On the other hand, 
\begin{equation}
\int \log \left( \frac{\eta}{cx+(1-c)\eta} \right)  \, d \mu(x) = - c \int E(x,\eta,c) \, d\mu(x) = 0 \, .
\end{equation}
So we have an equality in Jensen's inequality, which is only possible if the integrands are almost everywhere constant, that is, $\mu$ is a delta measure.
\end{proof}


In some senses, the DHS barycenters are better behaved than HS barycenters. For example, there is no critical phenomena. 

\begin{example}\label{ex.Bernoulli_again}
As in \cref{ex.Bernoulli}, consider the measures $\mu_p \coloneqq (1-p) \delta_0 + p \delta_1$, where $p \in [0,1]$. A calculation gives:
\begin{equation}
\llbracket \mu_p \rrbracket_c = c \left((1-c)^{-\frac{1-p}{p}} - 1 + c\right)^{-1} \, .
\end{equation}
For $c=1/2$, the graphs of the two barycenters are shown in \cref{fig.2graphs}.
\end{example}

\begin{figure}[htb]
\begin{center}
\includegraphics[width=.5\textwidth]{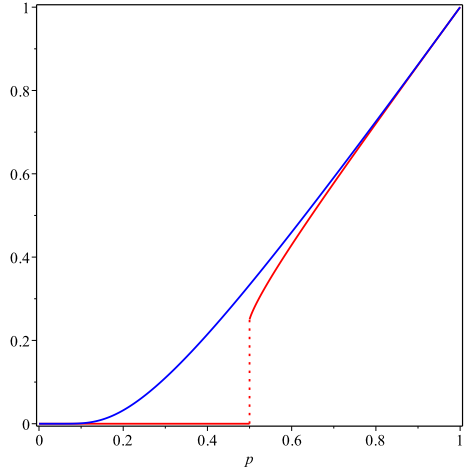}
\caption{Graphs of the functions $p \mapsto [\mu_p]_{1/2}$ (red) and $p \mapsto \llbracket \mu_p \rrbracket_{1/2}$ (blue), where $\mu_p \coloneqq (1-p) \delta_0 + p \delta_1$.}\label{fig.2graphs}
\end{center}
\end{figure}

The definition of DHS barycenters is not so arbitrary as it may seem at first sight; indeed, they approximate HS barycenters:

\begin{proposition}\label{p.tangent}
The HS and DHS barycenters are tangent at $\delta_{x_0}$, for any $x_0>0$.
In other words, if $t \in [0,t_1] \mapsto \mu(t) \in \PHS$ is an affine 
path with $\mu(0) = \delta_{x_0}$, then:
\begin{equation}
\left. \frac{d}{dt} [\mu(t)]_c \right|_{t=0}  = \left. \frac{d}{dt}  \llbracket \mu(t) \rrbracket_c \right|_{t=0} \, .
\end{equation}
\end{proposition}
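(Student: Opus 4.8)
The plan is to compute both derivatives at $t = 0$ using the implicit characterizations of the two barycenters near the delta measure $\delta_{x_0}$, and to observe that they satisfy the same linearized equation. Write $\mu = \mu(t) = (1-t)\delta_{x_0} + t\nu$ for some $\nu \in \PHS$, so that $\dot\mu = \nu - \delta_{x_0}$. Since $\mu(0) = \delta_{x_0}$ with $x_0 > 0$ lies strictly in the subcritical locus $\cS_c$ (it has $\mu(0)(\{0\}) = 0 < 1-c$ when $c < 1$, and the cases $c = 0$, $c = 1$ are trivial because there both barycenters coincide with the arithmetic resp.\ geometric barycenter), for small $t$ the measure $\mu(t)$ is subcritical, and both barycenters are given by smooth implicit formulas: $[\mu(t)]_c = \exp\int K(x,\eta(t),c)\,d\mu(t)$ where $\eta(t) = \eta(\mu(t),c)$ solves $\int K_y(x,\eta,c)\,d\mu(t) = 0$, and $\llbracket\mu(t)\rrbracket_c = \xi(t)$ solves $\int E(x,\xi(t),c)\,d\mu(t) = 0$, i.e.\ $\int K(x,\xi(t),c)\,d\mu(t) = \log\xi(t)$.

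First I would handle the HS side. By \cref{l.2nd_derivative} (more precisely its proof, formula \eqref{e.L_dot}), we have $(\log[\mu]_c)\dot{\ } = \int K(x,\eta(t),c)\,d\dot\mu$. At $t = 0$ we have $\eta(0) = \eta(\delta_{x_0},c) = x_0$ by \cref{l.internality_eta} (or directly: $\delta_{x_0}$ is internal, so $\eta \in \{x_0\}$), and $[\delta_{x_0}]_c = x_0$ by reflexivity. Hence
\begin{equation}
\left.\frac{d}{dt}[\mu(t)]_c\right|_{t=0} = x_0 \int K(x,x_0,c)\,d\dot\mu(x) = x_0\left(\int K(x,x_0,c)\,d\nu(x) - K(x_0,x_0,c)\right),
\end{equation}
and since $K(x_0,x_0,c) = \log x_0$ by \cref{p.kernel}.\eqref{i.kernel_reflex}, this equals $x_0\left(\int K(x,x_0,c)\,d\nu(x) - \log x_0\right)$.

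Next I would do the DHS side by implicit differentiation of $\int K(x,\xi(t),c)\,d\mu(t) = \log\xi(t)$. Differentiating at $t = 0$, using $\xi(0) = x_0$:
\begin{equation}
\dot\xi(0)\int K_y(x,x_0,c)\,d\delta_{x_0}(x) + \int K(x,x_0,c)\,d\dot\mu(x) = \frac{\dot\xi(0)}{x_0}.
\end{equation}
Now $K_y(x_0,x_0,c) = 0$: indeed $K_y(x,y,c) = \Delta(x,y)/y$ with $\Delta(x,y) = 1 - x/(cx+(1-c)y)$ by \eqref{e.Delta}, and $\Delta(x_0,x_0) = 1 - x_0/x_0 = 0$. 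So the first term vanishes and we get $\int K(x,x_0,c)\,d\dot\mu(x) = \dot\xi(0)/x_0$, i.e.\ $\dot\xi(0) = x_0\int K(x,x_0,c)\,d\dot\mu(x)$. Comparing with the HS computation, $\left.\frac{d}{dt}\llbracket\mu(t)\rrbracket_c\right|_{t=0} = \dot\xi(0) = x_0\int K(x,x_0,c)\,d\dot\mu(x) = \left.\frac{d}{dt}[\mu(t)]_c\right|_{t=0}$, which is the claim.

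The only real subtlety — the "main obstacle" — is justifying differentiation under the integral sign and the smoothness of $\eta(t)$ and $\xi(t)$ near $t=0$; but this is already supplied by the analyticity statement in \cref{l.2nd_derivative} for $\eta$, and for $\xi$ by the same inverse-function-theorem argument (the function $y \mapsto \int E(x,y,c)\,d\mu(x)$ is analytic with strictly negative $y$-derivative on $\cS_c$, since $E_y = K_{yy} - \(-1/y\)\cdot$wait—more directly, $E_y(x,y,c) = -\frac{(1-c)x}{y(cx+(1-c)y)} < 0$ for $x>0$). For $c = 0$ and $c = 1$ the statement is trivial since the two barycenters literally coincide, so one may assume $c \in (0,1)$ throughout. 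I would also note at the outset that the affine path stays in $\PHS$ by hypothesis and stays subcritical for small $t$ by continuity of $t \mapsto \mu(t)(\{0\})$ along the path (here $\mu(t)(\{0\}) = t\,\nu(\{0\}) \to 0$), so all the subcritical-case machinery applies.
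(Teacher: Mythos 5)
Your argument is correct and is essentially the paper's own proof: reduce to $c\in(0,1)$, use formula \eqref{e.L_dot} together with $\eta(0)=x_0$ and $[\delta_{x_0}]_c=x_0$ for the HS side, and implicitly differentiate $\int K(x,\xi,c)\,d\mu=\log\xi$ using $K_y(x_0,x_0,c)=0$ for the DHS side. (One cosmetic slip in your side remark: $E_y(x,y,c)=-\dfrac{x}{y\left(cx+(1-c)y\right)}$, without the extra factor $1-c$; its strict negativity for $x>0$, which is all you need, still holds.)
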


\begin{proof}
It is sufficient to consider $c \in (0,1)$.
Let us use the notations from the proof of \cref{l.2nd_derivative}.
We evaluate \eqref{e.L_dot} at $t=0$, using $\eta(0) = x_0$, thus obtaining:
\begin{equation}\label{e.L_dot_delta}
\dot{L}(0) = \int K(x,x_0) \, d\dot{\mu} \, .
\end{equation}
Next, consider $\xi = \xi(t) \coloneqq \llbracket \mu(t) \rrbracket_c$.
By definition, $\int K(x,\xi) \, d\mu = \log \xi$.
Differentiating this equation,
\begin{equation}
\dot{\xi} \int K_y (x,\xi) \, d\mu + \int K (x,\xi) \, d\dot\mu = \frac{\dot \xi}{\xi} \, .
\end{equation}
Evaluating at $t=0$ and $\xi(0)=x_0$, we obtain:
\begin{equation}
\dot\xi(0) K_y(x_0,x_0) + \int K(x,x_0) d\dot\mu = \frac{\dot \xi(0)}{x_0} \, .
\end{equation}
But a calculation shows that $K_y(x_0,x_0) = 0$ (this can also be seen as a consequence of part~\eqref{i.kernel_reflex} of \cref{p.kernel}), so we obtain:
\begin{equation}
\dot\xi(0) =  x_0 \int K(x,x_0) \, d\dot{\mu} = x_0 \dot{L}(0) =  \left. \frac{d}{dt} [\mu(t)]_c \right|_{t=0} \, ,
\end{equation}
as we wanted to prove.
\end{proof}

The approximation between the two barycenters is often surprisingly good, even for measures that are not very close to a delta measure:

\begin{example}
If $\mu$ is Lebesgue measure on $[1,2]$ and $c = 1/2$, then: 
\begin{align}
[\mu]_c                     &\simeq 1.485926 \, , \\
\llbracket \mu \rrbracket_c   &\simeq 1.485960 \, ,
\end{align}
a difference of $0.002$\%. 
\end{example}

To conclude our paper, let us confirm that 
HS barycenters are not quasi-affine, except for the extremal cases $c=0$ and $c=1$:

\begin{proof}[Proof of \cref{p.HS_not_qa}]
Let $c \in (0,1)$.
Choose some $\mu_0$ in subcritical locus \eqref{e.sc} which is not a delta measure.
Let $y_0 \coloneqq [\mu_0]_c \in \R_{\plus\plus}$ and $\mu_1 \coloneqq \delta_{y_0}$.
Then $\eta( \mu_1, c) = y_0$.
We claim that $\eta( \mu_0, c) \neq y_0$. 
Indeed, if $\eta( \mu_0, c) = y_0$, then, by \eqref{e.def_HS}, $\log y_0 = \int K(x,y_0,c) \, d\mu_0(x)$, and so $\llbracket \mu_0 \rrbracket_c = y_0$, which by \cref{p.HS_leq_DHS} implies that $\mu_0$ is a delta measure: contradiction. 
Now \cref{p.strict} guarantees that the function $[\mathord{\cdot}]_c$ is strictly log-concave on the segment $[\mu_0,\mu_1]$; in particular, it is not constant. 
Since the function attains the same value on the extremes of the segment, it cannot be quasi-affine.
\end{proof}

\smallskip

\noindent \emph{Acknowledgement.} We thank Juarez Bochi for helping us with computer experiments at a preliminary stage of this project.



\begin{thebibliography}{CHMW}
	
\bibitem[AD]{AD}
\textsc{Aczél, János; Dhombres, Jean G.} -- 
\textit{Functional equations in several variables.}
Encyclopedia of Mathematics and its Applications, 31. Cambridge University Press, Cambridge, 1989. 
\MRev{1004465}
\zb{0685.39006}



\bibitem[AL]{AL}
\textsc{Arnaudon, Marc; Li, Xue-Mei} -- 
Barycenters of measures transported by stochastic flows.
\textit{Ann.\ Probab.\ }33 (2005), no.\ 4, 1509--1543.
\MRev{2150197}
\zb{1077.60039}



\bibitem[Au]{Austin}
\textsc{Austin, Tim} -- 
A CAT(0)-valued pointwise ergodic theorem.
\textit{J.\ Topol.\ Anal.\ }3 (2011), no.\ 2, 145--152. 
\MRev{2819191}
\zb{1221.37006}


\bibitem[ADSZ]{ADSZ}
\textsc{Avriel, Mordecai; Diewert, Walter E.; Schaible, Siegfried; Zang, Israel} --
\textit{Generalized concavity.}
Reprint of the 1988 original ed.
Classics in Applied Mathematics, 63. 
Society for Industrial and Applied Mathematics (SIAM), Philadelphia, PA, 2010.
\MRev{3396214}
\zb{1200.90001}


\bibitem[BN]{Balogh}
\textsc{Balogh, József; Nguyen, Hoi} -- 
A general law of large permanent. 
\textit{Discrete Contin.\ Dyn.\ Syst.\ }37 (2017), no.\ 10, 5285--5297. 	
\doi{10.3934/dcds.2017229}
\MRev{3668362}
\zb{1368.15007}


	


\bibitem[BIP]{BIP}
\textsc{Bochi, Jairo; Iommi, Godofredo; Ponce, Mario} -- 
The scaling mean and a law of large permanents. 
\textit{Adv.\ Math.\ }292 (2016), 374--409. 
\doi{10.1016/j.aim.2016.01.013}
\MRev{3464025}
\zb{1392.15011}


\bibitem[BB]{Borweins}
\textsc{Borwein, Jonathan M.; Borwein, Peter B.} --  
\textit{Pi and the AGM. A study in analytic number theory and computational complexity.} Reprint of the 1987 original. Canadian Mathematical Society Series of Monographs and Advanced Texts, 4. John Wiley \& Sons, Inc., New York, 1998. \MRev{1641658} 
\zb{0903.11001}




\bibitem[Bu]{Bullen}
\textsc{Bullen, Peter S.} -- 
\textit{Handbook of means and their inequalities.}
Mathematics and its Applications, 560. 
Kluwer Academic Publishers Group, Dordrecht, 2003. 
\MRev{2024343}
\zb{1035.26024}

\bibitem[CHMW]{Cellarosi}
\textsc{Cellarosi, Francesco; Hensley, Doug; Miller, Steven J.; Wellens, Jake L.} -- 
Continued fraction digit averages and Maclaurin's inequalities.
\textit{Exp.\ Math.\ }24 (2015), no.\ 1, 23--44. 
\doi{10.1080/10586458.2014.952050}
\MRev{3305037}
\zb{1390.11097}


\bibitem[Da]{Daroczy}
\textsc{Daróczy, Zoltán} -- 
Über eine Klasse von Mittelwerten. 
\textit{Publ.\ Math.\ Debrecen} 19 (1972), 211--217.
\MRev{0328008}
\zb{0265.26010}

\bibitem[DP]{DaroczyPales}
\textsc{Daróczy, Zoltán; Páles, Zsolt} --
On comparison of mean values. 
\textit{Publ.\ Math.\ Debrecen} 29 (1982), 107--115.
\MRev{0673144}
\zb{0508.26010}






\bibitem[EM]{EM}
\textsc{Émery, Michel; Mokobodzki, Gabriel} -- 
Sur le barycentre d'une probabilité dans une variété.
\textit{Séminaire de Probabilités, XXV}, 220--233, 
Lecture Notes in Math., 1485, Springer, Berlin, 1991. 
\MRev{1187782}
\zb{0753.60046}


\bibitem[Fe]{Feller}
\textsc{Feller, William} -- 
\textit{An introduction to probability theory and its applications.} Vol.~I. 3rd.~ed. 
John Wiley \& Sons, Inc., New York-London-Sydney, 1968.
\MRev{0228020}
\zb{0155.23101}


\bibitem[GZ]{GZ}
\textsc{Guglielmi, Nicola; Zennaro, Marino} -- 
An antinorm theory for sets of matrices: bounds and approximations to the lower spectral radius.
\textit{Linear Algebra Appl.\ }607 (2020), 89--117. 
\doi{10.1016/j.laa.2020.07.037}
\MRev{4137719}



\bibitem[Gu]{Gurvits}
\textsc{Gurvits, Leonid} -- 
Van der Waerden/Schrijver-Valiant like conjectures and stable (aka hyperbolic) homogeneous polynomials: one theorem for all. With a corrigendum. 
\textit{Electron.\ J.\ Combin.\ }15 (2008), no.\ 1, Research Paper 66, 26 pp.
\doi{10.37236/790}
\MRev{2411443}
\zb{1182.15008}

\bibitem[HS]{HS76}
\textsc{Halász, Gábor; Székely, Gábor J.} -- 
On the elementary symmetric polynomials of independent random variables.
\textit{Acta Math.\ Acad.\ Sci.\ Hungar.\ }28 (1976), no. 3-4, 397--400. 
\doi{10.1007/BF01896806}
\MRev{0423491}
\zb{0349.60053}

\bibitem[HLP]{HLP} 
\textsc{Hardy, Godfrey H.; Littlewood, John E.; Pólya, George} -- 
\textit{Inequalities.}
Reprint of the 1952 edition. 
Cambridge Mathematical Library. 
Cambridge University Press, Cambridge, 1988. 
\MRev{0944909}
\zb{0634.26008}

\bibitem[He]{Heath}
\textsc{Heath, Thomas} -- 
\textit{A history of Greek mathematics. Vol. I. From Thales to Euclid.} 
Corrected reprint of the 1921 original. Dover Publications, Inc., New York, 1981.
\MRev{0654679}
\zb{48.0046.01}





\bibitem[KLL]{KLL}
\textsc{Kim, Sejong; Lawson, Jimmie; Lim, Yongdo} -- 
Barycentric maps for compactly supported measures.
\textit{J.\ Math.\ Anal.\ Appl.\ }458 (2018), no.\ 2, 1009--1026. 
\doi{10.1016/j.jmaa.2017.09.037}
\MRev{3724713}
\zb{1376.28001}




\bibitem[Kr]{Krengel}
\textsc{Krengel, Ulrich} --
\textit{Ergodic theorems.}
With a supplement by Antoine Brunel. De Gruyter Studies in Mathematics, 6. Walter de Gruyter \& Co., Berlin, 1985.
\MRev{0797411}
\zb{0575.28009}

\bibitem[Mai]{Maistrov}
\textsc{Maistrov, L.E.} -- 
\textit{Probability theory: a historical sketch.} 
Translated and edited by Samuel Kotz. 
Probability and Mathematical Statistics, Vol. 23. 
Academic Press, New York-London, 1974.
\MRev{0378008}
\zb{0308.01001}

\bibitem[Maj]{Major}
\textsc{Major, Péter} -- 
The limit behavior of elementary symmetric polynomials of i.i.d.\ random variables when their order tends to infinity.
\textit{Ann.\ Probab.\ }27 (1999), no.\ 4, 1980--2010. 
\directlink{https://www.jstor.org/stable/2652852}
\MRev{1742897}
\zb{0963.60021}




\bibitem[Na]{Navas}
\textsc{Navas, Andrés} -- 
An $L^1$ ergodic theorem with values in a non-positively curved space via a canonical barycenter map. 
\textit{Ergodic Theory Dynam.\ Systems} 33 (2013), no.\ 2, 609--623. 
\doi{10.1017/S0143385711001015}
\MRev{3035300}
\zb{1277.37014}




\bibitem[Pa]{Partha_book}
\textsc{Parthasarathy, Kalyanapuram R.} -- 
\textit{Probability measures on metric spaces.}
Probability and Mathematical Statistics, No.\ 3. 
Academic Press, Inc., New York-London, 1967.
\MRev{0226684}
\zb{0153.19101}

\bibitem[Si]{Simon}
\textsc{Simon, Barry} -- 
\textit{Advanced complex analysis. A Comprehensive Course in Analysis, Part 2B.} 
American Mathematical Society, Providence, RI, 2015. 
\MRev{3364090}
\zb{1332.00005}

\bibitem[St]{Stone}
\textsc{Stone, Marshall H.} -- 
Postulates for the barycentric calculus. 
\textit{Ann.\ Mat.\ Pura Appl.\ }(4) 29 (1949), 25--30. 
\doi{10.1007/BF02413910}
\MRev{0036014}
\zb{0037.25002}

\bibitem[Sz1]{Szekely_first}
\textsc{Székely, Gábor J.} -- 
On the polynomials of independent random variables. 
\textit{Limit theorems of probability theory} (Colloq., Keszthely, 1974), pp.\ 365--371. 
Colloq.\ Math.\ Soc.\ János Bolyai, Vol.\ 11, North-Holland, Amsterdam, 1975. 
\MRev{0391235}
\zb{0316.60041}


\bibitem[Sz2]{Szekely_CLT}
\textsc{Székely, Gábor J.} -- 
A limit theorem for elementary symmetric polynomials of independent random variables.
\textit{Z.\ Wahrsch.\ Verw.\ Gebiete} 59 (1982), no.\ 3, 355--359. 
\doi{10.1007/BF00532226}
\MRev{0721631}
\zb{0468.60029}



\bibitem[vE]{vanEs}
\textsc{van Es, Bert} -- 
On the weak limits of elementary symmetric polynomials. 
\textit{Ann.\ Probab.\ }14 (1986), no.\ 2, 677--695. 
\directlink{https://www.jstor.org/stable/2243933}
\MRev{0832030}
\zb{0601.60026}




\bibitem[Vi]{Villani} 
\textsc{Villani, Cédric} -- 
\textit{Topics in optimal transportation.}
Graduate Studies in Mathematics, 58. 
American Mathematical Society, Providence, RI, 2003.
\MRev{1964483}
\zb{1106.90001}

\bibitem[Zh]{Zhan}
\textsc{Zhan, Xingzhi} -- 
\textit{Matrix inequalities.}
Lecture Notes in Mathematics, 1790. 
Springer-Verlag, Berlin, 2002.
\MRev{1927396}
\zb{1018.15016}


\end{thebibliography}
\end{document}